\newcommand{\R}{{\mathbb R}}
\newcommand{\N}{{\mathbb N}}
\newcommand{\supp}{\text{\rm supp}}
\newcommand{\ap}{\alpha}             
\newcommand{\bt}{\beta}
\newcommand{\vep}{\varepsilon}
\newcommand{\ld}{\lambda}            \newcommand{\Ld}{\Lambda}
\newcommand{\sm}{\sigma}             
\newcommand{\vp}{\varphi}
\newcommand{\f}{\frac}
\newcommand{\fL}{{\mathfrak L}}
\newcommand{\fm}{{\mathfrak m}}
\newcommand{\cI}{{\mathcal I}}
\newcommand{\cK}{{\mathcal K}}
\newcommand{\cL}{{\mathcal L}}
\newcommand{\cM}{{\mathcal M}}
\newcommand{\cR}{{\mathcal R}}
\newcommand{\cS}{{\mathcal S}}
         \newcommand{\ep}{\epsilon}
\newcommand{\pa}{\partial}
\newcommand{\D}{\nabla}
\newtheorem{thm}{Theorem}[section]
\newtheorem{lemma}[thm]{Lemma}
\newtheorem{cor}[thm]{Corollary}
\newtheorem{remark}[thm]{Remark}
\newtheorem{prop}[thm]{Proposition}
\newtheorem{definition}[thm]{Definition}
\newtheorem{example}[thm]{Example}
\newtheorem{hypo}[thm]{Property}
\theoremstyle{definition}
\title[Integro-differential
operators with regularly varying kernels]
{Regularity   for fully nonlinear integro-differential
operators   with regularly varying kernels
     }
\author[Soojung Kim]{Soojung Kim}
\address{Soojung Kim :
National Institue for Mathematical Sciences, 
 Daejeon, 306-390,  Republic of Korea  }
\email{soojung26@gmail.com; soojung26@nims.re.kr}
\author[Yong-Cheol Kim]{ Yong-Cheol Kim}
\address{Yong-Cheol Kim: Department of Mathematics Education, Korea University, Seoul 136-701,
Republic of Korea }
\email{ychkim@korea.ac.kr}
\author[Ki-Ahm Lee]{Ki-Ahm Lee}
\address{Ki-Ahm Lee:
 School of Mathematical Sciences, Seoul National University,   Seoul 151-747, Republic of Korea \& Center for Mathematical Challenges, Korea Institute for Advanced Study, Seoul,130-722,  Republic of Korea}
\email{kiahm@math.snu.ac.kr}
\begin{document}
\begin{abstract} 
In this paper,  the regularity results  for the   integro-differential operators of the  fractional Laplacian type    by Caffarelli and Silvestre \cite{CS1}    are extended to  those for the  integro-differential
operators associated with symmetric,  regularly varying kernels at zero.  In particular, we obtain  the uniform Harnack inequality and H\"older estimate     of viscosity solutions to  the nonlinear  integro-differential equations  associated  with  the   kernels   $K_{\sigma, \beta}$    satisfying 
 $$
K_{\sigma,\beta}(y)\asymp \frac{ 2-\sigma}{|y|^{n+\sigma}}\left( \log\frac{2}{|y|^2}\right)^{\beta(2-\sigma)}\quad \mbox{near zero}
$$
with respect to $\sigma\in(0,2)$ close to $2$ (for a given      $\beta\in\mathbb R$), where the regularity estimates    do not blow up
 as    the order  $  \sigma\in(0,2)$  tends to   $2.$ 

\end{abstract}

\maketitle

\tableofcontents
	
\section{Introduction}

\subsection{Introduction}

In this paper, we  are concerned with  
 fully nonlinear elliptic    integro-differential operators   associated with   symmetric,  regularly varying kernels at zero.  From   the L\'evy-Khinchine formula, the purely jump processes which allow particles to interact at large scales  
are generated by  the   integral operators in the   form of 
  \begin{equation}\label{eq-main0}
  \cL u(x)=P.V. \int_{\R^n}\left\{u(x+y)-u(x) -\left(\nabla u(x)\cdot y \right)\chi_{B_1(0)}(y)\right\}d\fm(y),
  \end{equation}
  where    a  so-called   L\'evy measure   $\fm$  satisfies 
\begin{equation*}
 \int_{\R^n} \min(1,|y|^2) d\fm(y)<+\infty. 
\end{equation*}  
 Since the operators are given in too much generality, we    therefore restrict ourselves to considering only  the   operators given by   symmetric    kernels.   
In this case,   the operator  \eqref{eq-main0} can be written as   
  \begin{equation}\label{eq-main-intro}
\cL u(x)=\int_{\R^n}\left\{u(x+y)+u(x-y)-2u(x)\right\}K(y)\,dy,
\end{equation} 
where a symmetric  L\'evy measure   $\fm$  in \eqref{eq-main0}   is  given by a symmetric    kernel $K(y)=K(-y). $   
We note that the value of $\cL u(x)$ is well-defined when $u$ is bounded in $\R^n$ and $C^{1,1}$ at $x$ (see Definition \ref{def-C^{1,1}}). Nonlinear integro-differential operators  associated with the linear integro-differential operators above  arise naturally in the study    of  
   the  stochastic
control theory related to
\begin{equation*}
\cI u(x)=\sup_{\ap}\cL_{\ap}u(x),
\end{equation*}
and  game theory associated with
\begin{equation*}
\cI u(x)=\inf_{\bt}\sup_{\ap}\cL_{\ap\bt}u(x). 
\end{equation*} 
To study  uniform  regularity      for   such nonlinear integro-differential  operators,        
 the concept of ellipticity  for integro-differential  operators with respect to a class  $\fL$ of the  linear, integro-differential operators    \eqref{eq-main-intro}  
 was   introduced by Caffarelli and Silvestre \cite{CS1}; see \cite{CC}  for   elliptic second-order differential  operators.  
 In fact,  the concept of ellipticity for   integro-differential operators $\cI$     is characterized by the following property:
$$\inf_{\cL  \in\fL}\cL v(x) \leq \cI[u+v](x)-\cI u(x)\leq \sup_{\cL\in\fL}\cL v(x).  $$ 
On the basis of this idea, the regularity theory for  fully nonlinear  elliptic integro-differential operators  has been developed    by using analytic techniques  along the lines of the  Krylov and Safonov  \cite{KS,CC}  
  which dealt with  elliptic second-order differential  operators.  We refer   to    \cite{CS1, CS2}   and   references therein for uniform  regularity results for symmetric integro-differential  operators of the fractional Laplacian type, where the regularity estimates do not blow up   as  the order $\sm\in(0,2)$ of the operators   tends  to $2$.  
  In the case when the kernels are   nonsymmetric,  the uniform regularity  results  can be found in      
   \cite{KL1,KL2, LD1}.  We refer to \cite{KL3,LD2}  for results on the regularity of the parabolic  integro-differetial operators.

In this paper, we establish     the  uniform regularity  of viscosity solutions to      
 fully nonlinear  elliptic  integro-differential equations associated with symmetric,  regularly varying kernels at zero. 
We are mainly interested in     the 
        kernels $K$ for the integro-differential operator   \eqref{eq-main-intro}    satisfying    
\begin{equation}\label{eq-kernel1}
 \int_{\R^n} \min(1,|y|^2) K(y)dy<+\infty
\end{equation} 
and 
\begin{equation}\label{eq-kernel2} 
\qquad\qquad\quad (2-\sm)\ld\f{ l (|y|)}{|y|^{n}}\le
K(y)
\le (2-\sm)\Ld\f{ l(|y|)}{|y|^{n}},\quad 0<\ld\leq\Ld<+\infty, 
\end{equation}
 where    $l : (0, +\infty) \rightarrow (0,+\infty) $  is  a   locally bounded,  regularly varying function 
  at zero   with index $ {\color{black}-\sigma \in(-2,0)}$; refer to Appendix \ref{sec-regular-variation} for  regular  variations.      
The   kernels  of the type  \eqref{eq-kernel2} associated with  regularly varying functions at zero  appear  in the study of   the subordinate Brownian motions which are time changed Brownian motions  by     independent  subordinators (i.e.,  nonnegative L\'evy  processes); see e.g.   \cite{KSV} for a potential theory of  subordinate Brownian motions.   
 There are   known results on Harnack inequalities and H\"older estimates for    integro-differential  operators of this type with   probabilistic proofs.  
 In particular,   Kassmann and Mimica \cite{KM} recently  obtained  H\"older type estimates  for the linear  integro-differential operators  with regularly varying kernels at zero with index $-\sm\in[-2,0]$ based on intrinsic scaling properties;  the H\"older type estimates 
 blow up as the order of the operator approaches 2.
   As an extension of    the regularity  results   by Caffarelli and Silvestre  \cite{CS1}, we   obtain     uniform regularity results of  viscosity solutions  for a {\color{black} certain} class of  fully nonlinear elliptic  integro-differential operators associated  with symmetric,  regularly varying kernels at zero, which remain uniform as the order $\sm\in(0,2)$   tends to  $2$.


\subsection{Integro-differential operators}\label{subsec-operators}
As mentioned above, we study  uniform   regularity of viscosity solutions  for  a class of   the fully nonlinear elliptic   integro-differential operators associated with the  kernels of the type: 
$$K(y)\asymp (2-\sm)\frac{l(|y|)}{|y|^n}$$
for a regularly varying function  $l$  at zero       with index $-\sm\in(-2,0)$. For the purpose,   we first summarize   the    properties of the   regular variations  that  play an essential role in our analysis.  In the entire article,   a  measurable function $l:(0,+\infty)\to (0,+\infty)$  which stays locally bounded away from $0$ and $+\infty,$ will be commonly assumed to satisfy the following   properties. 

  \begin{hypo}\label{hypo-kernel-l}
 Let  a measurable function $l:(0,1]\to (0,+\infty)$  be locally bounded away from $0$ and $+\infty$. There exist  positive constants $\sm\in(0,2),$ $a_0\geq 1,$  and $\rho\in(0,1)$ satisfying the following. 
 \begin{enumerate}[(a)]

 \item 
There exists  $\delta\in\left[0, \frac{1}{2}\min( 2-\sm ,\sm)\right)\subset\left[0,1\right)$  
 such that 
$$  \frac{l(s)}{l(r)} 
\leq  {\color{black}a_0} \max\left\{  \left(\frac{s}{r}\right)^{-\sm+\delta}, \left(\frac{s}{r}\right)^{-\sm-\delta}\right\}\qquad\mbox{ for $r,s \in(0,1]$  }.$$

\item  Define
 $$L(r):= {\sm}\int_{r}^1 \frac{l(s)}{s}ds. $$
Then we have   for any $r\in (0,\rho)$
$$\frac{1}{2 } \leq \frac{L(r)}{l(r)}\leq 2.$$


 \item We   assume that $l(1)=1.$
\end{enumerate}  
\end{hypo} 

Influenced by Kassmann and Mimica \cite{KM}, we introduce the monotone function $L$ above defined by using the given     function $l$   in order to study scale invariant regularity estimates for the  integro-differential operators associated with symmetric, regularly varying kernels at zero.

The function $l$ at infinity  will be commonly  assumed to  satisfy the following property. 
  \begin{hypo}\label{hypo-kernel-l-infty}

Let  a measurable function $l:(0,+\infty)\to (0,+\infty)$  be locally bounded away from $0$ and $+\infty$,  and satisfy   Property \ref{hypo-kernel-l}.  
 There exists a positive constant $a_\infty\geq1 $ such that for some $\delta'\in\left[0, \frac{1}{2}\min( 2-\sm ,\sm)\right)\subset\left[0,1\right)$  
$$  \frac{l(s)}{l(r)} 
\leq  {\color{black}a_\infty} \max\left\{  \left(\frac{s}{r}\right)^{-\sm+\delta'}, \left(\frac{s}{r}\right)^{-\sm-\delta'}\right\}\qquad\mbox{ for $r,s \in[1,+\infty)$  }.$$


\end{hypo} 

Typical examples of the functions satisfying Property \ref{hypo-kernel-l} are 
   regularly varying functions  at zero with index $-\sm\in(-2,0),$ and   Property  \ref{hypo-kernel-l-infty}
is satisfied by assuming that the function $l$  varies regularly at infinity with  index $-\sm\in(-2,0)$; 
refer to  Appendix  \ref{sec-regular-variation}  for  the definition of regular    variations and their   properties.    
\begin{example}[Regularly varying functions]
\begin{enumerate}[(a)]
\item Trivial  examples of such  regularly varying functions  at zero and infinity with index $-\sm\in(-2,0)$ are   
  $$ (2-\sm)r^{-\sm}.$$
 The operator \eqref{eq-main-intro} with the choice above of  the regularly varying function    
   turns out to be  the well-known  fractional Laplacian operator $-\left(-\Delta\right)^{\sm/2}$ defined as 
$$-\left(-\Delta\right)^{\sm/2}u(x):= (2-\sm )\int_{\R^n} \frac{u(x+y)+u(x-y)-2u(x)}{|y|^{n+\sm}}dy,$$
 which converges to the Laplacian operator   as the order  $\sm\in(0,2)$ approaches $ 2.$
    We note that the factor $(2-\sm)$ enables us to obtain  second-order differential operators  as the  limits of integro-differential operators (see  \cite{DPV, CS1}, for example)   and hence uniform regularity results as the order $\sm\in(0,2)$ goes to the classical one.  
      \item   Among nontrivial examples of regularly  varying functions $l$ at zero with index $-\sm$  are  functions which    are equal to the  following functions near zero (see \cite{BGT}):  
      $$ r^{-\sm} \left(\log\frac{2}{r}\right)^{\beta},  r^{-\sm} \left(\log\frac{2}{r^2}\right)^{\beta},\quad\mbox{and}\quad r^{-\sm} \left(\log\log\frac{2}{r}\right)^{\beta} \qquad \mbox{  for   $\beta\in\R$}.$$ 
 \item    The following functions are non-logarithmic   regularly varying functions $l$ at zero with index $-\sm$:
$$r^{-\sm} \exp\left( \left(\log \frac{2}{r}\right)^{\beta} \right)\qquad\mbox{for  $\beta\in(0,1)$,}$$
and $$r^{-\sm} \exp\left(  \log \frac{2}{r} \left/ \log\log\frac{2}{r}\right. \right).$$

\end{enumerate}\end{example}

 For a  certain class of regularly varying functions,  the constants $a_0\geq 1,$   $\rho\in(0,1)$ and  $a_\infty$ in Properties  
\ref{hypo-kernel-l} and \ref{hypo-kernel-l-infty} can be selected  uniformly.  
  Let      $\sm_0\in(0,2),$ and let a locally bounded function   $l_0:(0,+\infty)\to(0,+\infty)$   be a slowly      varying function     at zero and infinity  which varies regularly at zero and infinity with index $0$ from the definition;  see Appendix \ref{sec-regular-variation}.  
 For $\sm\in[\sm_0,2), $ define a regularly varying function $l_\sm$       at zero and infinity  with index $-\sm\in(-2,-\sm_0]$ by 
 \begin{equation}\label{eq-l-sigma-intro}
 l_\sm(r):=r^{-\sm}l_0(r)^{2-\sm},\quad\forall r>0.
 \end{equation}
Making use of  a theory of regular variations,   we shall prove in Proposition \ref{prop-l-sigma} that 
the function $l_\sm$   satisfies Properties \ref{hypo-kernel-l} and \ref{hypo-kernel-l-infty}  with uniform constants $a_0, a_\infty\geq 1$ and $\rho\in(0,1)$ for   $\sm\in[\sm_0,2),$  where the uniform constants depend   only on dimension $n$,   $\sm_0\in(0,2)$, and a  given  slowly varying  function $l_0$ at  zero and infinity.  Regarding the regularly varying functions of the type \eqref{eq-l-sigma-intro}, we remark      that  for $\sm\in(0,2)$ and $\beta\in(-\sm,2-\sm),$ the kernel 
  \begin{align*}
  \displaystyle K(y)&\asymp (2-\sm) |y|^{-n-\sm}{ \left(\log\frac{1}{|y|^2}\right)^{\beta/2}}  \quad\mbox{near zero}
  \end{align*} 
   associated with the regularly varying function  $ l(r)=r^{-\sm} \left(\log\frac{1}{r^2}\right)^{\beta/2} $   describes      the        asymptotic behavior        of     the jumping kernel  at zero  of the subordinate process  which has the  characteristic    exponent    $\phi(s):= s^{\sm}(1+\log s^2  )^{\beta/2}$; refer to  a potential theory of subordinate Brownian motions \cite{KSV}.

To investigate  a   class of fully  nonlinear elliptic integro-differential  operators associated with symmetric, regularly varying kernels at zero, 
 let  $0<\ld\leq\Ld <+\infty,$ and  let   a function $l:(0,+\infty)\to (0,+\infty)$ satisfy  Properties \ref{hypo-kernel-l} and \ref{hypo-kernel-l-infty}.    
Owing to Properties \ref{hypo-kernel-l} and \ref{hypo-kernel-l-infty},    the following   properties of  the given function $l$ concerning the symmetric integro-differential  operators are   obtained; the  proof   can be  found  in Section \ref{sec-harnack}. 
 \begin{lemma} \label{lem-kernel-l-intro}
  Let  a measurable function $l:(0,+\infty)\to (0,+\infty)$  be locally bounded away from $0$ and $+\infty$, and  
satisfy  Properties \ref{hypo-kernel-l} and \ref{hypo-kernel-l-infty}  with   positive constants $\sm\in(0,2), a_0\geq 1, a_\infty\geq 1,$  and $\rho\in(0,1).$ Then we have the following:
 \begin{enumerate}[(a)]
 \item 
$$   {\color{black}\frac{1}{2a_0}}\frac{ r^2l(r)}{2-\sm} \leq  \int_0^rsl(s)ds \leq {\color{black}2a_0}\frac{ r^2l(r)}{2-\sm}, \quad\forall r\in(0,1],$$



\item
      \begin{equation*}
       {\sm}\int_1^{\infty}\frac{l(s)}{s}ds\leq {\color{black}2a_\infty}.
      \end{equation*}
\end{enumerate}
\end{lemma}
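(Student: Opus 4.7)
The plan is to reduce both estimates to direct integrations after applying the polynomial two-sided bounds on $l(s)/l(r)$ from Properties \ref{hypo-kernel-l}(a) and \ref{hypo-kernel-l-infty}, with the factor $2-\sigma$ (respectively $\sigma$) emerging from the integral and the hypothesis that $\delta, \delta'$ lie strictly below $\tfrac{1}{2}\min(\sigma, 2-\sigma)$.

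For part (a), fix $r\in(0,1]$ and let $s\in(0,r]$, so that $s/r\le 1$ and $r/s\ge 1$. For the upper bound, Property \ref{hypo-kernel-l}(a) applied to the pair $(s,r)$ gives
\[
l(s)\le a_0\, l(r)\,(s/r)^{-\sigma-\delta},
\]
because on $(0,1]$ the power $(s/r)^{-\sigma-\delta}$ dominates $(s/r)^{-\sigma+\delta}$. Multiplying by $s$ and integrating yields $\int_0^r sl(s)\,ds\le a_0 l(r)r^2/(2-\sigma-\delta)$, and the assumption $\delta<(2-\sigma)/2$ converts $1/(2-\sigma-\delta)$ into $\le 2/(2-\sigma)$, giving the upper estimate. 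For the lower bound, apply Property \ref{hypo-kernel-l}(a) in the reverse direction (to the pair $(r,s)$), which, since $r/s\ge 1$, reads
\[
l(r)\le a_0\, l(s)\,(r/s)^{-\sigma+\delta},\qquad\text{equivalently}\qquad l(s)\ge a_0^{-1} l(r)(r/s)^{\sigma-\delta}.
\]
Integrating $sl(s)$ from $0$ to $r$ produces $a_0^{-1} l(r)r^2/(2-\sigma+\delta)$, and the bound $\delta<(2-\sigma)/2\le 2-\sigma$ implies $1/(2-\sigma+\delta)\ge 1/(2(2-\sigma))$, yielding the constant $1/(2a_0)$.

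Part (b) is essentially the same argument at infinity. By Property \ref{hypo-kernel-l}(c) one has $l(1)=1$, so Property \ref{hypo-kernel-l-infty} with $r=1$ and $s\ge 1$ gives
\[
l(s)\le a_\infty\max\{s^{-\sigma+\delta'}, s^{-\sigma-\delta'}\}=a_\infty s^{-\sigma+\delta'},
\]
the maximum being attained at the larger exponent because $s\ge 1$. Therefore
\[
\sigma\int_1^\infty\frac{l(s)}{s}\,ds\le a_\infty\,\sigma\int_1^\infty s^{-\sigma+\delta'-1}\,ds=\frac{a_\infty\,\sigma}{\sigma-\delta'},
\]
and the condition $\delta'<\sigma/2$ makes $\sigma/(\sigma-\delta')\le 2$, finishing the proof.

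There is no real obstacle here beyond bookkeeping: the only thing to watch is which of the two power functions in the $\max$ dominates on each range ($s\le r$ vs $s\ge r$, and $s\ge 1$), and then ensuring that the elementary integrations produce a constant of the form $c/(2-\sigma)$ (respectively, that $\sigma$ cancels cleanly). The strict inequalities $\delta<\tfrac12(2-\sigma)$ and $\delta'<\sigma/2$ built into Property \ref{hypo-kernel-l} are precisely what is needed to obtain the uniform constants $2a_0$ and $2a_\infty$ independent of $\sigma$.
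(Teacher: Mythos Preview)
Your proof is correct and follows essentially the same approach as the paper: in both cases one factors out $l(r)$, applies the Potter-type bound from Property~\ref{hypo-kernel-l}(a) (or Property~\ref{hypo-kernel-l-infty}) to control $l(s)/l(r)$ by a power of $s/r$, integrates the resulting monomial, and then uses $\delta<\tfrac12(2-\sigma)$ (respectively $\delta'<\sigma/2$) to absorb the constant into $2a_0$ (respectively $2a_\infty$). Your treatment of the lower bound in (a), where you swap the roles of $r$ and $s$ and note that the other power in the $\max$ dominates, is exactly what the paper means by ``in a similar way.''
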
 
Now, let  
$\fL\left(\ld,\Ld, l\right)$ 
 denote the class  of the following linear integro-differential  operators 
  with the  kernels $K$:
\begin{equation*}
\cL u(x)=\int_{\R^n}\mu(u,x,y)  K(y) dy,
\end{equation*}
where $\mu(u,x,y):=u(x+y)+u(x-y)-2u(x)$
and 
\begin{equation}\label{eq-kernel2-intro-op} 
   (2-\sm)\ld\f{ l (|y|)}{|y|^{n}}\le
K(y)
\le(2-\sm)\Ld\f{ l(|y|)}{|y|^{n}}.
\end{equation}
 One can check   that the   kernels $K$ satisfying     \eqref{eq-kernel2-intro-op}   with   Properties \ref{hypo-kernel-l}  and \ref{hypo-kernel-l-infty}  satisfy \eqref{eq-kernel1} by using  Lemma \ref{lem-kernel-l-intro}.   
As mentioned in  the introduction, 
 we are     concerned with    the  nonlinear  integro-differential operator $\cI$ in the form of 
 \begin{equation}\label{eq-Bellman}
  \cI u:= \inf_\beta \sup_{\alpha}\cL_{\alpha \beta} u  
 \end{equation} 
 for some  $\cL_{\alpha \beta}\in \fL(\ld,\Ld,l).$ 
As extremal cases of  such nonlinear  integro-differential operators,  
       the Pucci type   extremal  operators with respect to the 
class $\fL(\ld,\Ld, l)$ 
  are  defined as 
\begin{equation}\label{eq-Pucci}
\begin{split}
\cM^+_{  \fL\left(\ld,\Ld, l\right)} u&:=\sup_{\cL\in\fL\left(\ld,\Ld, l\right)}\cL u,\\
\cM^-_{  \fL\left(\ld,\Ld, l\right)}u&:=\inf_{\cL\in\fL\left(\ld,\Ld, l\right)}\cL u.
\end{split}
\end{equation}
According to  Lemma \ref{lem-bellman-property}, 
 the integro-differential operator $\cI$ of the  inf-sup type  in  \eqref{eq-Bellman}  is elliptic with respect to $\fL(\ld,\Ld,l)$ in the nonlocal sense,  which, in particular,  implies     
$$\cM^-_{ \fL(\ld,\Ld,l)}  u\leq \cI u   \leq \cM^+_{\fL(\ld,\Ld, l)} u,$$ 
where we refer to Definition \ref{def-elliptic-op} for the nonlocal  notion of the ellipticity. 
Thus  we shall  deal with  a large class  of  the   integro-differential operators   defined in terms of  the Pucci type extremal  operators  so as to 
   establish  uniform  regularity estimates for the  fully nonlinear  elliptic integro-differential operators associated with symmetric, regularly varying kernels at zero.    %
 


\subsection{Main results}
Now  we present  our main results which extend the uniform regularity results of Caffarelli and Silvestre \cite{CS1}. 
 Below and hereafter, 
  we denote $B_R:=B_R(0)$ for $R>0.$
 \begin{thm}[Harnack inequality]\label{thm-Harnack-regularly-varying-kernel-intro}
Let $\sm_0\in(0,2)$ and  let a measurable function  $l:(0,+\infty)\to (0,+\infty)$  be locally bounded   away from $0$ and $+\infty,$ and satisfy  Properties \ref{hypo-kernel-l} and \ref{hypo-kernel-l-infty} with the   positive constants $\sm\in[\sm_0,2)$,  $a_0\geq 1, a_\infty\geq 1,$  and $\rho\in(0,1).$ 
For $0<R<1,$ and $C_0>0,$ let $u\in C(B_{2R})$ be a  bounded, nonnegative   function in $\R^n$ such that  
 $$\cM^-_{  \fL\left(\ld,\Ld, l\right)} u\leq C_0 \quad\mbox{and}\quad\cM^+_{  \fL\left(\ld,\Ld, l\right)}u\geq -C_0\quad\mbox{in $B_{2R}$}$$ in the viscosity sense.  
 Then there exist uniform constants $C>0$ and  $\rho_0\in(0,1)$   such that 
$$\sup_{B_{ {R}}}  u\leq C\left( \inf_{B_{ {R}}}u+ \frac{C_0}{L(\rho_0 R)}\right),$$
 where   $$L(r):= {\sm}\int_{r}^1 \frac{l(s)}{s}ds \quad\forall 0<r<1,$$
and $C>0$ and  $\rho_0\in(0,1)$   are  uniform constants depending only on  $n,\ld,\Ld,   \sm_0, $   $ a_0,a_\infty,$ and $ \rho$.  
\end{thm}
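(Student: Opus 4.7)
The plan is to follow the Krylov--Safonov program in its nonlocal reformulation by Caffarelli and Silvestre, with the natural length scale $r^\sigma$ replaced systematically by the slowly varying quantity $L(r)$ which absorbs the regularly varying factor $l$. After a standard reduction to the case of small $R$ by covering, the proof proceeds in three stages: a nonlocal ABP-type measure estimate, a special barrier construction, and an $L^\varepsilon$-decay iteration via Calder\'on--Zygmund cubes.

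First, I would establish a nonlocal Alexandrov--Bakelman--Pucci estimate: if $\mathcal{M}^-_{\mathfrak{L}(\lambda,\Lambda,l)} u \le C_0$ in $B_{2R}$, $u\ge 0$ in $\R^n$, and $\inf_{B_R} u \le 1$, then a suitable concave envelope of $u$ has a lower-contact set covering a definite fraction of a grid of cubes of side comparable to the intrinsic scale determined by $L$. On each contact cube, the contribution of every $\mathcal{L}\in\mathfrak{L}(\lambda,\Lambda,l)$ is controlled by a multiple of $L(r)$ via Lemma \ref{lem-kernel-l-intro}(a) applied to $\int_0^r s\,l(s)\,ds\approx r^2 l(r)/(2-\sigma)$; this is what identifies $L(\rho_0 R)$ as the correct normalising factor and explains the precise form of the error term $C_0/L(\rho_0 R)$.

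Second, I need a special barrier $\Psi\in C^{1,1}(\R^n)$ with $\Psi\le -2$ on $\overline{B_{1/2}}$, $\Psi\ge 0$ outside $B_2$, and $\mathcal{M}^+_{\mathfrak{L}(\lambda,\Lambda,l)}\Psi\le C\chi_{B_{1/2}}$, all constants independent of $\sigma\in[\sigma_0,2)$. Starting from a smooth modification of the Caffarelli--Silvestre ansatz, of the type $\Psi(x)=c_1-c_2|x|^{-p}$ on $\{|x|\ge 1/2\}$, I would compute $\mathcal{M}^+\Psi(x)$ by splitting the integral into $|y|\le 1$ and $|y|\ge 1$. The near-origin regime is handled via Property \ref{hypo-kernel-l}(a) together with Lemma \ref{lem-kernel-l-intro}(a), while the far-field contribution is controlled by Property \ref{hypo-kernel-l-infty} and Lemma \ref{lem-kernel-l-intro}(b). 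The $(2-\sigma)$ prefactor in the kernel bound \eqref{eq-kernel2-intro-op} cancels the $(2-\sigma)^{-1}$ blow-up in Lemma \ref{lem-kernel-l-intro}(a), which is precisely why the final estimate remains uniform up to $\sigma=2$.

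Combining the ABP estimate with the barrier in the standard way gives the key point estimate: there exist uniform $\mu>0$ and $M>1$ such that, after normalising $C_0$ by $L(\rho_0 R)$, the hypothesis $\inf_{B_R} u\le 1$ forces $|\{u\le M\}\cap Q_R|\ge \mu|Q_R|$. Iterating this via a Calder\'on--Zygmund decomposition on cubes of scale determined by $L$ produces the $L^\varepsilon$-estimate for supersolutions, and together with the matching local maximum principle for subsolutions (which follows from the same point estimate by a covering argument) one obtains the Harnack inequality in the stated form. The hard part will be the uniform-in-$\sigma$ barrier construction: one must control $\mathcal{M}^+\Psi$ in the presence of the slowly varying factor (for instance $l_0^{2-\sigma}$ in the model case \eqref{eq-l-sigma-intro}) and the absence of scale homogeneity of $l$. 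Here the Potter-type bound in Property \ref{hypo-kernel-l}(a), $l(s)/l(r)\lesssim \max\{(s/r)^{-\sigma+\delta},(s/r)^{-\sigma-\delta}\}$ with $\delta<\tfrac12\min(\sigma,2-\sigma)$, is precisely what allows the $C^{1,1}$ regularity of $\Psi$ to beat the singularity of $l$; verifying that the resulting bound is uniform requires a careful case analysis exploiting the strict inequality $\delta<(2-\sigma)/2$.
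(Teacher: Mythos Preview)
Your outline through the weak Harnack (ABP-type estimate, barrier, point estimate, Calder\'on--Zygmund iteration leading to the $L^\varepsilon$ decay) is correct and matches the paper's Sections~3.1--3.3 closely, including the identification of $L(\rho_0 R)$ as the right normalization via Lemma~\ref{lem-kernel-l-intro}. The barrier ansatz you describe is essentially the paper's $\varphi(x)=\min\{|\kappa_0 R|^{-p},|x|^{-p}\}$ (Lemma~\ref{lem-barrier-pre}), and your remark about the Potter-type bound absorbing the lack of homogeneity is exactly the mechanism used.

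The gap is in the final step. You write that the Harnack inequality follows from the $L^\varepsilon$ estimate ``together with the matching local maximum principle for subsolutions (which follows from the same point estimate by a covering argument).'' In the nonlocal setting this is not correct as stated: the point estimate in Lemma~\ref{lem-decay-1st-step} requires the function to be nonnegative in all of $\R^n$, not merely in $B_{2R}$, so you cannot simply apply it to $M-u$ and cover. Any truncation you introduce to restore global nonnegativity produces a tail error of the form $(2-\sigma)\int_{\R^n\setminus B_{\theta r}} (u-M)^+ \, l(|y|)|y|^{-n}\,dy$, and nothing in the subsolution hypothesis alone bounds this. The paper (following \cite[Theorem~11.1]{CS1}) does \emph{not} prove a separate local maximum principle. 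Instead, in Theorem~\ref{thm-Harnack-regularly-varying-kernel} it runs a direct argument: one takes the minimal $\alpha$ with $u\le h_\alpha(x):=\alpha(1-|x|/R)^{-\gamma}$, picks the touching point $x_0$, and applies the weak Harnack to $w:=\bigl((1-\theta/2)^{-\gamma}u(x_0)-u\bigr)^+$ on a small ball $B_{\theta r}(x_0)$. The crux is then to bound the tail contribution coming from $w\neq (1-\theta/2)^{-\gamma}u(x_0)-u$ outside $B_{\theta r}(x_0)$; this is done by choosing an auxiliary point $x_1$ where $u$ touches a paraboloid from below, evaluating the \emph{supersolution} inequality $\mathcal M^- u\le C_0 L(\rho_0 R)$ classically at $x_1$, and transferring the resulting integral bound \eqref{eq-harnack-proof-x1} to the tail at $x$ via the Potter bound on $l$. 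So the global nonnegativity of $u$ and the supersolution condition are both used in this step, not the subsolution condition alone; this is the extra idea your outline is missing.
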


\begin{thm}[H\"older estimate]\label{thm-Holder-intro}
 Under the same   assumption as in Theorem \ref{thm-Harnack-regularly-varying-kernel-intro}, 
  let $u\in C(B_{2R})$ be a bounded   function in $\R^n$ such that  
 $$\cM^-_{  \fL\left(\ld,\Ld, l\right)} u\leq C_0 \quad\mbox{and}\quad\cM^+_{  \fL\left(\ld,\Ld, l\right)}u\geq -C_0\quad\mbox{in $B_{2R}$}$$ in the viscosity sense.  
Then 
we have 
$$ R^{\ap}\,[u]_{\ap, B_{R}}\leq C\left( \|u\|_{L^{\infty}(\R^n)}+\frac{C_0}{L(\rho_0 R)}\right), $$
 where      $[u]_{\ap, B_{R}}$ stands for  the $\ap$-H\"older seminorm on $B_R$, and 
   the  uniform constants $\ap\in(0,1),$     $C>0$ and $\rho_0\in(0,1)$ depend only $n,\ld,\Ld,   \sm_0, $   $ a_0,a_\infty,$ and $ \rho$.  
\end{thm}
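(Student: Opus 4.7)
\textbf{Proof plan for Theorem~\ref{thm-Holder-intro}.} The plan is to deduce the H\"older estimate from the Harnack inequality (Theorem~\ref{thm-Harnack-regularly-varying-kernel-intro}) by the standard oscillation-decay iteration, adapted to the nonlocal regularly-varying setting. First I rescale: for fixed $x_0\in B_R$ and $r\le R$ put $\tilde u(x):=u(x_0+rx)$, so that after the change of variables $y\mapsto ry$ in the integral defining the operators, $\tilde u$ satisfies Pucci-type bounds on $B_2$ with kernels in the class governed by the auxiliary function $l_r(s):=l(rs)$. By the slow-variation estimates encoded in Properties~\ref{hypo-kernel-l} and \ref{hypo-kernel-l-infty}, the function $l_r$ satisfies these hypotheses with uniform constants $a_0,a_\infty,\rho$, so the Harnack inequality applies uniformly along the rescaling.

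Second, I prove the oscillation-decay lemma: there exist uniform $\tau\in(0,\tfrac12)$, $\theta\in(0,1)$, $\epsilon_0>0$ such that any $u$ with $\|u\|_{L^\infty(\R^n)}\le 1$ and right-hand side bounded by $\epsilon_0$ satisfies $\osc_{B_\tau}u\le(1-\theta)\osc_{B_1}u$. Using a half/half dichotomy, WLOG $v:=u-\inf_{B_1}u\ge0$ on a set of measure at least $\tfrac12|B_{1/2}|$ inside $B_{1/2}$. Truncating $v$ outside $B_1$ at zero produces $\bar v\ge 0$ on $\R^n$; the truncation introduces a bounded perturbation of the Pucci bounds, controlled by the tail integral $\int_{|y|\ge1}(2-\sm)l(|y|)/|y|^n\,dy\le C$, which is finite by Lemma~\ref{lem-kernel-l-intro}(b) combined with Property~\ref{hypo-kernel-l-infty}. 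Applying the Harnack inequality to $\bar v$ in $B_1$ yields $\bar v\ge\theta$ on $B_\tau$, whence the oscillation decay.

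Third, iterate along the geometric sequence $r_k:=\tau^k R$. Setting $u_k(x):=u(x_0+r_k x)$ rescaled and normalized by the current oscillation, $u_k$ obeys the hypotheses of the oscillation-decay lemma \emph{except} that outside $B_1$ it may grow geometrically from the preceding scales. This ``tail noise'' contributes an extra term to the Pucci inequality for $u_k$, which by the large-scale control in Property~\ref{hypo-kernel-l-infty} sums as a geometric series whose sum is absorbed into a slightly smaller decay factor (by choosing $\alpha$ sufficiently small). Combining, for every $k\ge 0$,
\[
\osc_{B_{r_k}(x_0)}u\;\le\; C(1-\theta)^k\Bigl(\|u\|_{L^\infty(\R^n)}+\frac{C_0}{L(\rho_0 R)}\Bigr),
\]
which is equivalent to the $C^\ap$-estimate with $\ap:=\log(1-\theta)/\log\tau$.

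The main obstacle is the bookkeeping of how the constant $C_0$ and the tail noise transform under each rescaling. For a pure fractional kernel both scale like $\tau^{k\sm}$ and sum as a geometric series; in our setting the analogue is that the correct normalization at radius $r$ is $1/L(\rho_0 r)$, and one must verify that $L(\tau r)/L(r)$ stays in a bounded interval uniformly in $r\in(0,\rho)$ and $\sm\in[\sm_0,2)$. This is exactly what Property~\ref{hypo-kernel-l}(b) ($L\asymp l$ near $0$) together with the quantitative slow variation in Property~\ref{hypo-kernel-l}(a) deliver: a direct computation shows $L(\tau r)-L(r)=\sm\int_{\tau r}^r l(s)/s\,ds\le C\,l(r)\le 2C\,L(r)$, and $L(\tau r)\ge L(r)$ by monotonicity, which closes the iteration with uniform constants independent of $\sm\to 2$, as required by the statement.
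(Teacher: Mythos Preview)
Your overall architecture---oscillation decay derived from the Harnack inequality, with careful handling of the nonlocal tail at each iteration step---is exactly what the paper intends. Indeed, the paper's own ``proof'' of Theorem~\ref{thm-Holder-intro} (restated as Theorem~\ref{thm-Holder}) is the single sentence ``From the Harnack inequality, we obtain the following H\"older regularity,'' deferring to the standard argument of \cite[Theorem~12.1]{CS1}. Your steps~2--4 (truncation, tail control via Lemma~\ref{lem-kernel-l-intro}, geometric iteration, and the comparison $L(\tau r)\asymp L(r)$ via Property~\ref{hypo-kernel-l}) are the correct ingredients and match that template.

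The one point that needs adjustment is your step~1. You rescale to the unit ball and claim that $l_r(s):=l(rs)$ satisfies Properties~\ref{hypo-kernel-l} and~\ref{hypo-kernel-l-infty} with the \emph{same} uniform constants. This is not quite true as stated: Property~\ref{hypo-kernel-l}(c) requires $l(1)=1$, which fails for $l_r$ (one has $l_r(1)=l(r)$), and after renormalizing by $l(r)$ the ratio in Property~\ref{hypo-kernel-l}(b) becomes $(L(rs)-L(r))/l(rs)$ rather than $L(rs)/l(rs)$, so the constants $\tfrac12,2$ are replaced by constants depending on $a_0,\sigma_0$. Also, the ``infinity'' regime for $l_r$ (i.e.\ $s\ge 1$) corresponds to $rs\in[r,\infty)$, which straddles both Property~\ref{hypo-kernel-l}(a) and Property~\ref{hypo-kernel-l-infty} for $l$, so the bookkeeping is messier than you indicate. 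None of this is fatal---one can push the argument through with modified (still uniform) constants---but it is an unnecessary complication. The paper avoids it entirely: Theorem~\ref{thm-Harnack-regularly-varying-kernel-intro} (and its restatement Theorem~\ref{thm-Harnack-regularly-varying-kernel}) is already formulated at every scale $R\in(0,1)$ with the correct normalization $C_0/L(\rho_0 R)$, so one iterates the oscillation decay directly at radii $r_k=\tau^k R$ without ever rescaling the kernel. That is the intended route, and it makes your final paragraph on $L(\tau r)/L(r)$ the only scaling computation actually needed.
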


\begin{remark}\label{rmk-Harnack}
{\rm
(i) According to Theorems \ref{thm-Harnack-regularly-varying-kernel-intro} and  \ref{thm-Holder-intro}, the Harnack inequality and H\"older estimate hold   for viscosity solutions to  the fully nonlinear   elliptic  integro-differential equations  with respect to $\fL(\ld,\Ld,l).$ In fact,  if $u$ is a   viscosity solution to $\cI u=f $ in $B_{2R}$ for  an elliptic integro-differential operator with respect to $\fL(\ld,\Ld,l)$  and $f\in L^{\infty}(B_{2R}),$ then $u$ satisfies
$$ \cM^-_{\fL(\ld,\Ld,l)}u\leq \|f\|_{L^{\infty}(B_{2R})}+|\cI0|,\quad\mbox{and}\quad\cM^+_{\fL(\ld,\Ld,l)}u \geq -\|f\|_{L^{\infty}(B_{2R})}-|\cI0|$$
in the viscosity sense. Thus, applying Theorems \ref{thm-Harnack-regularly-varying-kernel-intro}  and  \ref{thm-Holder-intro}, the regularity results follow. 

(ii)   For  any   regularly varying   function $l $    at zero  and infinity with index $-\sm\in(-2,0)$ which stays away from 0 and $+\infty$,  we   obtain the Harnack inequality and H\"older estimate     for the    elliptic  integro-differential operators  with respect to $\fL(\ld,\Ld,l)$   as a corollary. 
Here,  the constants in the regularity    estimates above   depend only on  $n,\ld,\Ld, $ and the given   regularly varying function $l. $
 }
 \end{remark}
 
 Making use of Theorem \ref{thm-Holder-intro}, 
 we establish  the $C^{1,\alpha}$ estimate  for the fully nonlinear  elliptic integro-differential operators associated  with regularly varying kernels at zero and infinity provided that the kernels  satisfy a cancelation property at infinity; see Subsection \ref{subsec-C1alpha}. Furthermore,  the H\"older estimate
for the elliptic integro-differential operators associated with truncated kernels at infinity is also obtained in Subsection \ref{subsec-truncated}, which is important  for applications.  In fact, 
 the assumption of the kernels at infinity   for the  H\"older estimate   in Theorem \ref{thm-Holder-intro}  can be weakened   replacing     Property \ref{hypo-kernel-l-infty} by the boundedness of the integral  at infinity 
\begin{equation}\label{eq-integral-infty}
(2-\sm)\int_1^{\infty} \frac{l(s)}{s}ds \leq a_\infty  
\end{equation}  
for some $a_\infty >0$;
in the following,  we rephrase  Theorem \ref{thm-Holder-truncated}  by assuming \eqref{eq-integral-infty}.   
 

\begin{thm}\label{thm-Holder-int-infty}
Let $\sm_0\in(0,2)$ and  let   $l:(0,+\infty)\to [0,+\infty)$  be a measurable function which is   locally bounded  away from $0$ and $+\infty$  on $(0,1]$, and satisfy  Property \ref{hypo-kernel-l}   with the   positive constants $\sm\in[\sm_0,2)$,  $a_0\geq 1, $  and $\rho\in(0,1).$ We assume that $$(2-\sm)\int_1^{\infty} \frac{l(s)}{s}ds \leq a_\infty\quad\mbox{for some  $a_\infty>0.$}$$
 For $0<R<1,$ and $C_0>0,$ let   $u\in C(B_{2R})$ be a bounded   function in $\R^n$ such that  
 $$\cM^-_{  \fL\left(\ld,\Ld, l\right)} u\leq C_0 \quad\mbox{and}\quad\cM^+_{  \fL\left(\ld,\Ld, l\right)}u\geq -C_0\quad\mbox{in $B_{2R}$}$$ in the viscosity sense.  
Then 
we have 
$$ R^{\ap}\,[u]_{\ap, B_{R}}\leq C\left( \|u\|_{L^{\infty}(\R^n)}+\frac{C_0}{L(\rho_0 R)}\right), $$
 where     
   the  uniform constants $\ap\in(0,1),$     $C>0$ and $\rho_0\in(0,1)$ depend only $n,\ld,\Ld,   \sm_0, $   $ a_0,$   $ \rho$ and $a_\infty$.  

\end{thm}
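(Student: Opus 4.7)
The plan is to reduce the theorem to Theorem~\ref{thm-Holder-intro} by a standard tail-truncation argument. Define the extended function
$$
\tilde l(r) := \begin{cases} l(r) & 0 < r \leq 1, \\ r^{-\sm} & r > 1, \end{cases}
$$
which is continuous at $r=1$ because $l(1)=1$, inherits Property~\ref{hypo-kernel-l} on $(0,1]$ with the same constants $a_0, \rho$, and trivially satisfies Property~\ref{hypo-kernel-l-infty} on $[1,\infty)$ with $\delta'=0$ and $a_\infty^{\tilde l}=1$. Hence $\tilde l$ falls within the scope of Theorem~\ref{thm-Holder-intro}.

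Next I would compare the extremal operators built from $l$ versus $\tilde l$. Because every kernel in $\fL(\ld,\Ld,l)$ agrees in pointwise bounds with a corresponding kernel in $\fL(\ld,\Ld,\tilde l)$ on $B_1$, the two extremal operators differ only in their tail contributions. For bounded $u$ and any $\cL\in\fL(\ld,\Ld,l)$ with kernel $K$,
$$
\Bigl|\int_{\R^n\setminus B_1}\mu(u,x,y)K(y)\,dy\Bigr| \leq 4\|u\|_{L^\infty(\R^n)}\,c_n\,\Ld\,(2-\sm)\int_1^\infty \frac{l(s)}{s}\,ds \leq 4c_n\Ld\,a_\infty\|u\|_{L^\infty(\R^n)},
$$
by the hypothesis \eqref{eq-integral-infty}, and the analogous tail bound for $\tilde l$ is $4c_n\Ld(2-\sm)/\sm \leq 8c_n\Ld/\sm_0$. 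Splitting each $\cL$ into its inside-$B_1$ and outside-$B_1$ parts and observing that the inside parts coincide for the two classes, I obtain the pointwise comparison
$$
\bigl|\cM^{\pm}_{\fL(\ld,\Ld,l)}u(x) - \cM^{\pm}_{\fL(\ld,\Ld,\tilde l)}u(x)\bigr| \leq C_1\|u\|_{L^\infty(\R^n)}
$$
for a constant $C_1 = C_1(n,\Ld,a_\infty,\sm_0)$. This comparison carries over to the viscosity sense, since the tail integrals depend only on $u$ itself, not on any touching test function.

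Applying this to the hypotheses of the theorem, $u$ satisfies
$$
\cM^-_{\fL(\ld,\Ld,\tilde l)}u \leq C_0 + C_1\|u\|_{L^\infty(\R^n)}, \qquad \cM^+_{\fL(\ld,\Ld,\tilde l)}u \geq -C_0 - C_1\|u\|_{L^\infty(\R^n)}
$$
in $B_{2R}$ in the viscosity sense. Theorem~\ref{thm-Holder-intro} applied to $u$ with the new right-hand side then yields
$$
R^\ap [u]_{\ap,B_R} \leq C\Bigl(\|u\|_{L^\infty(\R^n)} + \frac{C_0 + C_1\|u\|_{L^\infty(\R^n)}}{L(\rho_0 R)}\Bigr).
$$
Since $R<1$ and $\rho_0<1$ force $L(\rho_0 R) \geq L(\rho_0)$, and $L(\rho_0)$ is bounded below by a positive constant depending only on $n,\sm_0,a_0,\rho$ (as follows from the lower bound $l(s) \geq 1/a_0$ on $[\rho_0,1]$ contained in Property~\ref{hypo-kernel-l}(a) applied with $r=1$, upon integrating $l(s)/s$ over $[\rho_0,1]$), the spurious term $C_1\|u\|_{L^\infty}/L(\rho_0 R)$ can be absorbed into $\|u\|_{L^\infty(\R^n)}$, producing the stated estimate.

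The main obstacle is the comparison step: one must keep the prefactor $(2-\sm)$ explicit in every tail estimate so that the constant $C_1$ depends only on $n,\Ld,a_\infty,\sm_0$ and is genuinely uniform as $\sm\uparrow 2$, and one must verify that the pointwise comparison of the extremal operators transfers to the viscosity formulation. Once these points are handled the result reduces to a direct invocation of Theorem~\ref{thm-Holder-intro}.
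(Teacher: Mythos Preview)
Your proposal is correct and follows essentially the same route as the paper. The paper packages the tail--truncation step through the enlarged class $\tilde\fL(\ld,\Ld,\tilde l,\kappa)$ of Subsection~\ref{subsec-truncated} and Lemma~\ref{lem-tr-kernel} (which is exactly your comparison $|\cM^\pm_{\fL(l)}u-\cM^\pm_{\fL(\tilde l)}u|\le C_1\|u\|_{L^\infty}$), and then invokes Theorem~\ref{thm-Holder}; your argument does the same computation directly, with the same choice $\tilde l(r)=r^{-\sm}$ for $r>1$ and the same absorption of the extra $\|u\|_{L^\infty}$ term via the uniform lower bound on $L(\rho_0)$.
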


With the help of   Proposition \ref{prop-l-sigma},   Theorems \ref{thm-Harnack-regularly-varying-kernel-intro} ,    \ref{thm-Holder-intro}, and \ref{thm-Holder-int-infty}  yield  the uniform Harnack inequality and H\"older estimate  for  the fully nonlinear    elliptic  integro-differential operators with respect to the class  $\fL(\ld,\Ld,l_\sm)$  associated with symmetric, regularly varying kernels   of  the  type   \eqref{eq-l-sigma-intro} for $\sm\in[\sm_0,2)\subset(0,2)$, where the regularity estimates do not blow up  as the order $\sm $ goes to $2.$



\begin{thm}[Uniform estimates for   the operators associated with the kernels of the  type   \eqref{eq-l-sigma-intro}]\label{thm-Harnack-l-sigma-intro}
 Let $\sm_0\in(0,2),$ and  let  a measurable function $l_0:(0,+\infty)\to(0,+\infty)$ be  locally bounded away from $0$ and $+\infty$, and    vary slowly  at zero and infinity such that $l_0(1)=1$. 
 For $\sm\in[\sm_0,2), $ define 
 {\color{black}
 \begin{equation}\label{eq-L-sigma}
 \begin{split}
 l_\sm(r)&:=r^{-\sm}l_0(r)^{2-\sm},\quad \forall r\in(0,+\infty),\\ L_\sm(r)&:= {\sm}\int_{r}^1  {s^{-1-\sm}l_0(s)^{2-\sm}}ds,\quad \forall r\in(0,1].
 \end{split}
 \end{equation}}
 \begin{enumerate}[(a)]
\item For $0<R<1,$ and $C_0>0,$ let $u\in C(B_{2R})$ be a  bounded, nonnegative   function in $\R^n$ such that  
 $$\cM^-_{  \fL\left(\ld,\Ld, l_\sm\right)} u\leq C_0 \quad\mbox{and}\quad\cM^+_{  \fL\left(\ld,\Ld, l_\sm\right)}u\geq -C_0\quad\mbox{in $B_{2R}$}$$ in the viscosity sense.     Then we have 
$$\sup_{B_{ R}}  u\leq C\left( \inf_{B_{ {R}}}u+ \frac{C_0}{L_\sm(\rho_0 R)}\right).$$
\item 
Let $u\in C(B_{2R})$ be a bounded   function in $\R^n$ such that  
 $$\cM^-_{  \fL\left(\ld,\Ld, l_\sm\right)} u\leq C_0 \quad\mbox{and}\quad\cM^+_{  \fL\left(\ld,\Ld, l_\sm\right)}u\geq -C_0\quad\mbox{in $B_{2R}$}$$ in the viscosity sense.     Then we have 
$$R^{\ap}\,[u]_{\ap, B_{R}}\leq C\left( \|u\|_{L^{\infty}(\R^n)}+\frac{C_0}{L_\sm(\rho_0 R)}\right) ,$$
 where  $C>0,$    $\rho_0\in(0,1)$ and $\ap\in(0,1)$  are  uniform constants depending only on  $n,\ld,\Ld,  \sm_0$  and the slowly varying function  $l_0$ at zero and infinity.   
 \end{enumerate}
\end{thm}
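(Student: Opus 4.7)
The plan is to deduce this theorem as a direct corollary of Theorems \ref{thm-Harnack-regularly-varying-kernel-intro} and \ref{thm-Holder-intro} once $l_\sm$ is shown to satisfy Properties \ref{hypo-kernel-l} and \ref{hypo-kernel-l-infty} with constants independent of $\sm\in[\sm_0,2)$. Since Proposition \ref{prop-l-sigma} provides exactly this uniformity, the argument splits cleanly into (i) invoking the proposition and (ii) applying the earlier theorems.

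First, I would apply Proposition \ref{prop-l-sigma} to extract constants $a_0,a_\infty\ge 1$ and $\rho\in(0,1)$, depending only on $n$, $\sm_0$, and the given slowly varying function $l_0$, such that for every $\sm\in[\sm_0,2)$ the function $l_\sm(r)=r^{-\sm}l_0(r)^{2-\sm}$ satisfies Properties \ref{hypo-kernel-l} and \ref{hypo-kernel-l-infty} with these constants. Next, observe that the function $L$ appearing in the conclusions of Theorems \ref{thm-Harnack-regularly-varying-kernel-intro} and \ref{thm-Holder-intro} coincides, when $l=l_\sm$, with the function $L_\sm$ defined in \eqref{eq-L-sigma}:
\begin{equation*}
L(r)=\sm\int_r^1\frac{l_\sm(s)}{s}\,ds=\sm\int_r^1 s^{-1-\sm}l_0(s)^{2-\sm}\,ds=L_\sm(r).
\end{equation*}
Applying Theorem \ref{thm-Harnack-regularly-varying-kernel-intro} to a nonnegative viscosity solution of the two extremal inequalities with respect to $\fL(\ld,\Ld,l_\sm)$ immediately yields assertion (a), and applying Theorem \ref{thm-Holder-intro} in the same way yields assertion (b). Since the constants produced by Theorems \ref{thm-Harnack-regularly-varying-kernel-intro} and \ref{thm-Holder-intro} depend only on $n,\ld,\Ld,\sm_0,a_0,a_\infty,\rho$, and the uniform constants $a_0,a_\infty,\rho$ of Proposition \ref{prop-l-sigma} depend only on $n,\sm_0$, and $l_0$, the final constants $C,\rho_0,\ap$ depend only on $n,\ld,\Ld,\sm_0$, and $l_0$, as claimed.

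The main obstacle, already absorbed into Proposition \ref{prop-l-sigma}, is establishing the $\sm$-uniformity of $a_0,a_\infty$ and $\rho$. The delicate part is Property \ref{hypo-kernel-l}(a): one needs a Potter-type bound for $l_\sm$ with exponent $\delta<\frac{1}{2}\min(\sm,2-\sm)$, and since
\begin{equation*}
\frac{l_\sm(s)}{l_\sm(r)}=\left(\frac{s}{r}\right)^{-\sm}\left(\frac{l_0(s)}{l_0(r)}\right)^{2-\sm},
\end{equation*}
one must exploit slow variation of $l_0$ so that the factor $(l_0(s)/l_0(r))^{2-\sm}$ contributes only an $\sm$-independent multiplicative constant together with an exponent strictly smaller than $\frac{1}{2}\min(\sm_0,2-\sm)$. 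The comparability $l_\sm\asymp L_\sm$ on $(0,\rho)$ in Property \ref{hypo-kernel-l}(b) then follows from a Karamata-type asymptotic for $\int_r^1 s^{-1-\sm}l_0(s)^{2-\sm}\,ds$ that remains uniform as $\sm\to 2^-$, and a completely analogous argument at infinity yields Property \ref{hypo-kernel-l-infty}. Once these uniform bounds are in hand, the present theorem is a one-line application of the earlier results.
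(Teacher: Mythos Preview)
Your proposal is correct and matches the paper's own argument essentially verbatim: the paper derives Theorem \ref{thm-Harnack-l-sigma-intro} by invoking Proposition \ref{prop-l-sigma} to obtain $\sm$-uniform constants $a_0,a_\infty,\rho$ for $l_\sm$, and then applying Theorems \ref{thm-Harnack-regularly-varying-kernel-intro} and \ref{thm-Holder-intro} directly. Your identification $L=L_\sm$ and your discussion of the Potter- and Karamata-type inputs behind Proposition \ref{prop-l-sigma} (handled in the paper via Lemma \ref{lem-l-sigma}) are exactly on point.
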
    
In Theorem \ref{thm-Harnack-l-sigma-intro},  we establish the uniform Harnack inequality  and H\"older estimate  for  a class of  fully nonlinear elliptic  integro-differential operators associated with the  kernels $K_\sm$  in the form of 
\begin{equation*}\label{eq-kernel2-uniform} 
 (2-\sm)\ld\f{ l_0 (|y|)^{2-\sm}}{|y|^{n+\sm}}\le
K_\sm(y)\le (2-\sm) \Ld\f{ l_0(|y|)^{2-\sm}}{|y|^{n+\sm}}. 
\end{equation*} 
In the case when  $l_0\equiv 1,$ we observe that for $\sm\in[\sm_0,2),$
  \begin{align*}
 l_\sm(r)&=r^{-\sm}\\
  L_\sm(r)&=   r^{-\sm}-1  \geq (1-2^{-\sm_0})r^{-\sm}, \quad \forall r\in(0,1/2).
 \end{align*}
  This  implies that our results recover   \cite[Theorem 11.1, Theorem 12.1]{CS1}.   
  In particular, considering the following  example of slowly varying functions at zero:  for   $l_0(r):= \left( \log\frac{2}{r^2}\right),$
 $$l_{0}^{\,\beta}(r )= \left( \log\frac{2}{r^2}\right)^{\beta} \quad\mbox{   $\forall r\in(0,1),$ $\beta\in\R$},$$    Theorem \ref{thm-Harnack-l-sigma-intro} asserts  the uniform Harnack inequality  and H\"older estimate of the elliptic  integro-differential operators associated with   the     regularly varying  kernel $K_{\sm,\beta}$ at zero with  index  $-\sm\in(-2,-\sm_0]$ for    $\beta\in\R$:  
 \begin{equation*}
(2-\sm)\f{ \ld}{|y|^{n+\sm}}\left( \log\frac{2}{|y|^2}\right)^{\beta(2-\sm)} \le
K_{\sm,\beta}(y)  \le  (2-\sm)\f{ \Ld}{|y|^{n+\sm}}\left( \log\frac{2}{|y|^2}\right)^{\beta(2-\sm)}\quad\mbox{near zero},
\end{equation*} 
 where the uniform constants  in the regularity estimates  depend only on $n,\ld,\Ld, \sm_0,\beta$ and the given slowly varying function $l_0$ at zero.    
 
 Lastly, we have the following theorem as  a corollary of Theorem \ref{thm-Holder-int-infty} by imposing \eqref{eq-integral-infty} instead of Property \ref{hypo-kernel-l-infty}.
   \begin{thm} \label{thm-Holder-l-sigma-int}
   Let $\sm_0\in(0,2),$ and  let  a measurable function $l_0:(0,+\infty)\to[0,+\infty)$ be  locally bounded away from $0$ and $+\infty$ on $(0,1]$, and    vary slowly  at zero such that $l_0(0)=1$.  For $\sm\in[\sm_0,2), $ define  $l_\sm:(0,+\infty)\to[0,+\infty)$ and $L_\sm:(0,1]\to(0,+\infty)$ as \eqref{eq-L-sigma}.  We assume that $$(2-\sm)\int_1^{\infty} \frac{l_\sm(s)}{s}ds \leq a_\infty\quad\mbox{for some  $a_\infty>0.$}$$  
 For $0<R<1,$ and $C_0>0,$ let   $u\in C(B_{2R})$ be a bounded   function in $\R^n$ such that  
 $$\cM^-_{  \fL\left(\ld,\Ld, l\right)} u\leq C_0 \quad\mbox{and}\quad\cM^+_{  \fL\left(\ld,\Ld, l\right)}u\geq -C_0\quad\mbox{in $B_{2R}$}$$ in the viscosity sense.  
Then 
we have 
$$ R^{\ap}\,[u]_{\ap, B_{R}}\leq C\left( \|u\|_{L^{\infty}(\R^n)}+\frac{C_0}{L(\rho_0 R)}\right), $$
 where     
   the  uniform constants $\ap\in(0,1),$     $C>0$ and $\rho_0\in(0,1)$ depend only $n,\ld,\Ld,   \sm_0, $  $a_\infty,$ and  the slowly varying function  $l_0$ at zero.  

\end{thm}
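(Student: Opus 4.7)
The plan is to deduce Theorem \ref{thm-Holder-l-sigma-int} as a direct specialization of Theorem \ref{thm-Holder-int-infty} applied to the particular family $l_\sm$ defined in \eqref{eq-L-sigma}. All I need to verify is that the hypotheses of Theorem \ref{thm-Holder-int-infty} are satisfied by $l_\sm$ with constants $a_0\ge 1$ and $\rho\in(0,1)$ that are uniform in $\sm\in[\sm_0,2)$ (depending only on $n,\sm_0$, and the slowly varying function $l_0$). Once this uniformity is in hand, the bounds produced by Theorem \ref{thm-Holder-int-infty} will automatically have the dependence claimed in Theorem \ref{thm-Holder-l-sigma-int}.

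The first step is to invoke Proposition \ref{prop-l-sigma}, which has already been stated in the excerpt as the assertion that $l_\sm$ satisfies Property \ref{hypo-kernel-l} (and Property \ref{hypo-kernel-l-infty}, though we do not need the latter here) with constants $a_0$ and $\rho$ depending only on $n,\sm_0,$ and the slowly varying function $l_0$. Morally, this uniformity comes from Potter-type bounds for slowly varying functions (see Appendix \ref{sec-regular-variation}): for every $\epsilon>0$ one has $l_0(s)/l_0(r)\le C_\epsilon\max\{(s/r)^\epsilon,(s/r)^{-\epsilon}\}$ for $r,s$ in a common small neighborhood of $0$; raising this to the power $(2-\sm)$ and combining with the explicit $r^{-\sm}$ factor in $l_\sm$ gives Property \ref{hypo-kernel-l}(a) with an exponent $\delta$ of order $(2-\sm)\epsilon$, which stays strictly below $\tfrac{1}{2}\min(\sm,2-\sm)$ uniformly in $\sm\in[\sm_0,2)$ provided $\epsilon$ is chosen small. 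The comparison $\tfrac12\le L_\sm/l_\sm\le 2$ in Property \ref{hypo-kernel-l}(b) near zero is a direct computation using the slow variation of $l_0$ together with $\sm\ge\sm_0>0$, which keeps the integral $\sm\int_r^1 s^{-1-\sm}l_0(s)^{2-\sm}\,ds$ comparable to $r^{-\sm}l_0(r)^{2-\sm}$.

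Next, the integral bound at infinity $(2-\sm)\int_1^{\infty} l_\sm(s)/s\,ds\le a_\infty$ is precisely the hypothesis \eqref{eq-integral-infty} of Theorem \ref{thm-Holder-int-infty}, and it is assumed outright in Theorem \ref{thm-Holder-l-sigma-int}. With all the hypotheses of Theorem \ref{thm-Holder-int-infty} verified for $l=l_\sm$ with uniform constants $a_0,\rho$ (from Proposition \ref{prop-l-sigma}) and with $a_\infty$ as given, I simply apply Theorem \ref{thm-Holder-int-infty}. Its conclusion is the estimate
\[
R^{\ap}\,[u]_{\ap,B_R}\le C\Bigl(\|u\|_{L^\infty(\R^n)}+\tfrac{C_0}{L_\sm(\rho_0 R)}\Bigr),
\]
with $\ap\in(0,1)$, $C>0$, $\rho_0\in(0,1)$ depending on $n,\ld,\Ld,\sm_0,a_0,\rho,a_\infty$; substituting the uniform values of $a_0$ and $\rho$ produced by Proposition \ref{prop-l-sigma} gives exactly the dependence on $n,\ld,\Ld,\sm_0,a_\infty,l_0$ claimed in the statement.

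The only real work lies in Proposition \ref{prop-l-sigma} and is not part of the present proof. The delicate point there, which I expect to be the principal obstacle to carrying out that proposition cleanly, is arranging the Potter bound so that the parameter $\delta$ in Property \ref{hypo-kernel-l}(a) remains strictly below the critical threshold $\tfrac{1}{2}\min(\sm,2-\sm)$ as $\sm\uparrow 2$; this is achieved by exploiting the exponent $(2-\sm)$ on $l_0$, which makes the slow-variation contribution of $l_0$ arbitrarily small in the limit. Once Proposition \ref{prop-l-sigma} is in place, Theorem \ref{thm-Holder-l-sigma-int} follows as described above by pure specialization of Theorem \ref{thm-Holder-int-infty}.
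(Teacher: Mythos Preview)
Your proposal is correct and follows essentially the same approach as the paper: the paper explicitly presents Theorem~\ref{thm-Holder-l-sigma-int} as a corollary of Theorem~\ref{thm-Holder-int-infty}, with the uniform verification of Property~\ref{hypo-kernel-l} for $l_\sm$ supplied by Proposition~\ref{prop-l-sigma} (whose proof, as you note, only uses slow variation of $l_0$ at zero for the Property~\ref{hypo-kernel-l} part). Your identification of the key mechanism---that the exponent $(2-\sm)$ on $l_0$ forces the Potter-bound contribution $\delta=\delta_0(2-\sm)$ to stay below $\tfrac12\min(\sm,2-\sm)$ uniformly as $\sm\uparrow 2$---matches exactly the argument in the proof of Proposition~\ref{prop-l-sigma}.
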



The rest of the paper is organized as follows. 
 Section \ref{sec-viscosity} contains an introduction to   viscosity solutions,   the ellipticity   for integro-differential operators and their properties.  
  Section \ref{sec-harnack} is devoted to the proof of  the uniform  regularity estimates 
 for a class  of fully nonlinear elliptic integro-differential  operators associated with  the kernels satisfying \eqref{eq-kernel2-intro-op} with  Properties \ref{hypo-kernel-l} and \ref{hypo-kernel-l-infty}. 
In  Section \ref{sec-uniform-harnack}, we prove  Proposition \ref{prop-l-sigma} and obtain   Theorems \ref{thm-Harnack-l-sigma-intro} and \ref{thm-Holder-l-sigma-int}  from Theorems \ref{thm-Harnack-regularly-varying-kernel-intro}, \ref{thm-Holder-intro} and  \ref{thm-Holder-int-infty}.   In Appendix \ref{sec-regular-variation},  we give the definitions of regularly and slowly varying functions and 
   summarize their  important  properties   which  are used in the paper.

\section{Viscosity solutions}\label{sec-viscosity}

In this section, we give  an   introduction to   the notions of    viscosity solutions and the ellipticity 
for    integro-differential operators as    in \cite{CS1}; see also \cite{LD1,KL2}. Important properties of viscosity solutions such as the stabilities under uniform convergence and the comparison principle are  also provided;  refer to \cite{CC} for the local case.       
We begin with  the concept of {\it    $C^{1,1}$ at the  point}.

\begin{definition}[$C^{1,1}$ at the point]\label{def-C^{1,1}}
Let $x\in\R^n.$  A function $\varphi$ is said to be $C^{1,1}$ at the point $x,$ denoted by $\varphi\in C^{1,1}(x),$ if there exist a vector $p\in\R^n$ and a number $M>0$ such that 
\begin{equation}\label{eq-C^{1,1}}
\left|\varphi(x+y)-\varphi(x)-p\cdot y\right|\leq M|y|^2\quad\mbox{for small $y\in\R^n$.}
\end{equation} For a set $\Omega\subset \R^n,$ we say  that $\varphi$ is $C^{1,1}$ in $\Omega$  when \eqref{eq-C^{1,1}} holds for any $x\in\Omega$ with a uniform constant $M>0.$ 
\end{definition}

 Now, we recall the   viscosity solutions for integro-differential operators. 
 \begin{definition}[Viscosity solution] \label{def-visc-sol} 
 Let     $\Omega\subset \R^n$ be an open set  
 and let $f$ be a   function in $\Omega. $
A  bounded  function  $u:  \R^n\to\R$ which is upper (lower) semi-continuous  in  $\overline\Omega$ is  called a viscosity subsolution (supersolution) to the integro-differential equation $\cI u=f$ in $\Omega$ and we write $\cI u \geq f$  in $\Omega $ ($\cI u \leq f$ in $\Omega$) when the following holds: if a $C^2$-function $\varphi$ touches $u$   from  above (below) at $x\in\Omega$ in a small neighborhood $N$ of $x, $ i.e., 
\begin{enumerate}[(i)]
\item $\varphi(x)=u(x),$
\item $\phi >u $ ($\phi<u$) in $N\setminus \{x\},$
\end{enumerate} 
then the function $v$ defined as 
\begin{equation*}
v:=\left\{  
\begin{split}
&\varphi\quad \mbox{in $N$,}\\
& u \quad\mbox{in $\R^n\setminus N,$}
\end{split}
\right.
\end{equation*}
satisfies $\cI v(x)\geq  f(x)$ ($\cI v(x)\leq  f(x)$). 
We say $u$ is  a viscosity solution if $u$ is  both a viscosity subsolution and a viscosity supersolution.
\end{definition}
Here, we consider {\it bounded}  viscosity solutions for nonlocal operators for simplicity. 
Our method  to prove the Harnack inequality for viscosity solutions  can   be  also employed under the assumption that 
the viscosity solutions have    a certain growth rate at infinity related to a    class of integro-differential operators to deal with; see   \cite{BI}  for  viscosity solutions  to integro-differential equations  in  a general framework.   
 
The  notion of ellipticity for integro-differential operators  is defined  making use of a nonlocal version of  the Pucci extremal  operators. 
For given    $0<\ld\leq\Ld <+\infty,$ and    a function $l:(0,+\infty)\to [0,+\infty)$ satisfying  Property \ref{hypo-kernel-l} and \eqref{eq-integral-infty}, which stays away from $0$ and $+\infty$ on $(0,1]$, 
let  
$\fL\left(\ld,\Ld, l\right)$
 denote the  class  of the following linear integro-differential  operators 
  with the    kernels $K$   satisfying  \eqref{eq-kernel2-intro-op}:
\begin{equation*}
\cL u(x)=\int_{\R^n}\mu(u,x,y)  K(y) dy,
\end{equation*}
where  $\mu(u,x,y):=u(x+y)+u(x-y)-2u(x).$ 
We recall the Pucci type  extremal operators:
\begin{equation*}
\begin{split}
\cM^+_{  \fL\left(\ld,\Ld, l\right)} u&:=\sup_{\cL\in\fL\left(\ld,\Ld, l\right)}\cL u,\quad \mbox{and}\quad\cM^-_{  \fL\left(\ld,\Ld, l\right)} u:=\inf_{\cL\in\fL\left(\ld,\Ld, l\right)}\cL u.
\end{split}
\end{equation*}   One can check that 
\begin{align*}
\cM^+_{  \fL\left(\ld,\Ld, l\right)} u(x)&= (2-\sm)\int_{\R^n}\left\{\Ld \mu^+(u,x,y) -\ld\mu^-(u,x,y)\right\} \frac{l(|y|)}{|y|^n}\,dy, \\
\cM^-_{  \fL\left(\ld,\Ld, l\right)} u(x)&= (2-\sm)\int_{\R^n}\left\{\ld \mu^+(u,x,y)^+ -\Ld\mu^-(u,x,y)\right\} \frac{l(|y|)}{|y|^n}\,dy,
\end{align*}
where   $\mu^{\pm}(u,x,y) :=\max \left\{\pm\mu(u,x,y),0\right\}. $ 

In terms of the Pucci type operators with respect to the class  $\fL(\ld,\Ld,l),$   the elliptic integro-differential operators with respect to $\fL(\ld,\Ld,l)$ are defined as below, which we can apply our results to. 

 \begin{definition}[Ellipticity for nonlocal operators] \label{def-elliptic-op}
 An operator $\cI$ is said to be  elliptic with respect to the class $\fL(\ld,\Ld, l)$ if  it satisfies the following.
 \begin{enumerate}[(i)]
 \item  If  a bounded function $u$ in $\R^n$ is of $C^{1,1}(x),$ then $\cI u(x)$ is defined      classically. 
 \item If  a bounded function $u$ in $\R^n$   is of $C^{1,1}(\Omega)$ for an open set $\Omega,$ then $\cI u(x)$ is continuous in $\Omega.$ 
 \item   For    bounded functions $u\in C^{1,1}(x) ,$ and $v\in C^{1,1}(x)$,   we have 
$$\cM^-_{\fL\left(\ld,\Ld, l\right)}   v(x) \leq \cI[u+v](x)-\cI u(x)\leq \cM^+_{\fL\left(\ld,\Ld, l\right)} v(x).$$  
\end{enumerate}
  \end{definition}

\begin{remark} 
{\rm
 If  a bounded function $u$ in $\R^n$ is   $C^{1,1}$ at the point $x,$ then the Pucci operators $\cM^{\pm}_{\fL(\ld,\Ld, l)}u(x)$ are  defined classically due to the properties (a) and (b) in Lemma \ref{lem-kernel-l-intro}. 
}
\end{remark}

In  Definition \ref{def-visc-sol},  a $C^2$-test function $\varphi$ can be  taken to be    $C^{1,1}$ only at the contact point $x$ for elliptic integro-differential operators.  We are led to   consider   a larger set of test functions and  a stronger concept of  the viscosity solution, however,  those approaches turn out to be  equivalent  thanks to the following lemma. 
 The proof  is   similar to      one of \cite[Lemma 4.3]{CS1} with the help of  
 Lemma \ref{lem-kernel-l-intro}; see also  \cite[Lemma 4.3]{LD1}. 
\begin{lemma}
 Let     $\Omega\subset \R^n$ be an open set and let  $\cI$ be an elliptic  integro-differential operator with respect to $\fL=\fL(\ld,\Ld,l)$.   Let   $u: \R^n\to\R$ satisfy $\cI u \geq f$ in $\Omega$ in the viscosity sense.   We assume  that a function  $\varphi  \in C^{1,1}(x)$ (for a point  $x\in\Omega$)    touches $u$ from above at $x$  in a small neighborhood $N$ of $x$. Then the function $v$ defined as 
\begin{equation*}
v:=\left\{  
\begin{split}
&\varphi\quad \mbox{in $N$,}\\
& u \quad\mbox{in $\R^n\setminus N,$}
\end{split}
\right.
\end{equation*}
 satisfies  $\cI v(x)\geq f(x)$ in the classical sense.
\end{lemma}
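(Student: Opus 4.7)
The plan is to upgrade the viscosity test function from $C^2$ to $C^{1,1}(x)$ by a standard approximation-and-limit argument in which $\varphi$ is replaced near $x$ by a $C^\infty$ paraboloid and then the limit is taken. Since $\varphi \in C^{1,1}(x)$, I can extract $p \in \R^n$, $M > 0$ and $r_0 > 0$ such that $|\varphi(x+y) - \varphi(x) - p \cdot y| \leq M|y|^2$ for $|y| \leq r_0$. For each small $\epsilon > 0$ I would consider the paraboloid
$$P_\epsilon(z) := \varphi(x) + p \cdot (z-x) + (M+\epsilon)|z-x|^2,$$
whose upper half of the $C^{1,1}(x)$ inequality yields $P_\epsilon \geq \varphi$ on $B_{r_0}(x)$ with equality only at $x$. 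Shrinking to $r_\epsilon \leq \min(r_0, \dist(x,\partial N))$, $P_\epsilon$ is a genuine $C^2$ function touching $u$ strictly from above at $x$ in $B_{r_\epsilon}(x)$. Setting $w_\epsilon := P_\epsilon$ on $B_{r_\epsilon}(x)$ and $w_\epsilon := u$ elsewhere, Definition~\ref{def-visc-sol} applied to the test function $P_\epsilon$ delivers $\cI w_\epsilon(x) \geq f(x)$.

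Next I would compare $\cI w_\epsilon(x)$ with $\cI v(x)$ through the ellipticity inequality (Definition~\ref{def-elliptic-op}(iii))
$$\cI w_\epsilon(x) - \cI v(x) \leq \cM^+_{\fL(\ld,\Ld,l)}(w_\epsilon - v)(x),$$
reducing everything to showing this Pucci quantity vanishes as $r_\epsilon, \epsilon \to 0$. The crucial pointwise observation is that $w_\epsilon - v = P_\epsilon - \varphi \geq 0$ on $B_{r_\epsilon}(x)$, $w_\epsilon - v = u - \varphi \leq 0$ on $N \setminus B_{r_\epsilon}(x)$, and $w_\epsilon - v = 0$ outside $N$. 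Because $B_{r_\epsilon}(x)$ is symmetric about $x$, either both $x\pm y$ lie in $B_{r_\epsilon}(x)$ (when $|y|<r_\epsilon$) or neither does. Hence for $|y|\geq r_\epsilon$ one has $\mu(w_\epsilon - v,x,y)\leq 0$, while for $|y|<r_\epsilon$ the $C^{1,1}(x)$ bound on $\varphi$ gives
$$\mu(w_\epsilon - v,x,y) = 2(M+\epsilon)|y|^2 - \mu(\varphi,x,y) \leq (4M+2\epsilon)|y|^2.$$
Thus $\mu^+(w_\epsilon - v,x,y)$ is supported in $\{|y|<r_\epsilon\}$ with a quadratic bound there.

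Plugging this into the explicit formula for $\cM^+_{\fL(\ld,\Ld,l)}$ and invoking Lemma~\ref{lem-kernel-l-intro}(a) produces
$$\cM^+_{\fL(\ld,\Ld,l)}(w_\epsilon - v)(x) \leq C(n,\Ld,a_0)\,(4M+2\epsilon)\, r_\epsilon^2\, l(r_\epsilon),$$
and Property~\ref{hypo-kernel-l}(a), compared at base point $1$, yields $r_\epsilon^2\, l(r_\epsilon) \leq a_0\, l(1)\, r_\epsilon^{2-\sm-\delta}$. Since $\delta < \tfrac12\min(2-\sm,\sm)$ forces $2-\sm-\delta > 0$, this bound tends to $0$ as $r_\epsilon \to 0$. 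Sending $r_\epsilon,\epsilon\to 0$ in $\cI v(x)\geq f(x) - \cM^+_{\fL(\ld,\Ld,l)}(w_\epsilon - v)(x)$ concludes the argument, provided one first checks that $\cI v(x)$ is classically defined, which follows from $v\in C^{1,1}(x)$ (inherited from $\varphi$), the global boundedness of $v$, and the integrability at $0$ and at $\infty$ supplied by Lemma~\ref{lem-kernel-l-intro}. I expect the bookkeeping of the three sign regimes of $w_\epsilon - v$ — ensuring only $\{|y|<r_\epsilon\}$ contributes to $\mu^+$ — to be the main conceptual step; once that is in hand, everything reduces to the small-scale kernel estimate of Lemma~\ref{lem-kernel-l-intro}(a).
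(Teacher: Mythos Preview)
Your proof is correct and follows essentially the same approach as the paper, which simply refers to \cite[Lemma 4.3]{CS1} together with Lemma~\ref{lem-kernel-l-intro}: approximate the $C^{1,1}(x)$ test function by a touching paraboloid, apply the viscosity definition, and pass to the limit using ellipticity and the small-scale kernel bound $\int_0^{r} s\,l(s)\,ds \lesssim r^2 l(r)/(2-\sigma)$. Your explicit bookkeeping of the sign regimes of $w_\epsilon - v$ and the use of Property~\ref{hypo-kernel-l}(a) to show $r_\epsilon^2\,l(r_\epsilon)\to 0$ are exactly the adaptations to the regularly varying setting that the paper has in mind.
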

 
 Due to    Property \ref{hypo-kernel-l} and \eqref{eq-integral-infty} (Lemma \ref{lem-kernel-l-intro}),    the results of     \cite{CS1}  on  viscosity solutions for the elliptic  integro-differential operators  hold true for our  elliptic integro-differential operators    with respect to the class $\fL(\ld,\Ld,l)$.  
First,  the following lemma concerns  the nonlinear  integro-differential operators of the inf-sup type  \eqref{eq-Bellman}; the proofs can be found in \cite[Sections 3 and    4]{CS1}.

\begin{lemma}[Properties of the inf-sup type operators]\label{lem-bellman-property}
Let $\cI$ be the operator   in the form of  \eqref{eq-Bellman}.  Then we have the following. 
\begin{enumerate}[(a)]
\item   $\cI$ is an elliptic  integro-differential operator  with respect to $\fL(\ld,\Ld, l),$ that is,  $\cI$ satisfies: 
\begin{enumerate}[(i)]
\item For    bounded     functions $u $ and $v$ which are  $C^{1,1}$  at x, 
$$\cM^-_{\fL\left(\ld,\Ld, l\right)}   v(x) \leq \cI[u+v](x)-\cI u(x)\leq \cM^+_{\fL\left(\ld,\Ld, l\right)} v(x).$$  
\item If a bounded function  u in $\R^n$  is $C^{1,1}$ in an open set $\Omega,$ then $\cI u(x)$ is continuous in $\Omega.$ 
\end{enumerate}
\item If $u$ is a viscosity subsolution to $\cI u= f$ in an open set $\Omega,$ and a function $\varphi \in C^{1,1}(x)$  (for $x\in\Omega$) touches $u$ from above at $x$ in a small neighborhood of $x,$ then $\cI u(x)$ is defined  classically and $\cI u(x)\geq f(x).$  
\end{enumerate}
\end{lemma}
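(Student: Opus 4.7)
The plan is to deduce both parts from the linearity of each $\cL_{\alpha\beta}\in\fL(\ld,\Ld,l)$ combined with the integrability estimates of Lemma \ref{lem-kernel-l-intro}, following the template of \cite[Sections 3--4]{CS1}. The only change from the fractional Laplacian case is that the homogeneous bound $|y|^{-n-\sm}$ is replaced by $(2-\sm)l(|y|)/|y|^n$, and this is absorbed by the $l$-integrability in Lemma \ref{lem-kernel-l-intro}.

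For part (a)(i), I would fix bounded $u,v$ of class $C^{1,1}(x)$ and use that each $\cL_{\alpha\beta}$ is linear, so by the very definition of the Pucci extremal operators
$$\cL_{\alpha\beta}u(x)+\cM^-_{\fL(\ld,\Ld,l)}v(x)\leq\cL_{\alpha\beta}[u+v](x)\leq\cL_{\alpha\beta}u(x)+\cM^+_{\fL(\ld,\Ld,l)}v(x).$$
Applying $\sup_\alpha$ and then $\inf_\beta$---noting that the extremal terms are independent of $\alpha,\beta$---produces the asserted ellipticity inequality. Beforehand one must verify that each $\cL_{\alpha\beta}u(x)$ is a finite real number; splitting the integral into $B_r$ and $\R^n\setminus B_r$, this follows from $C^{1,1}(x)$-control of $u$ combined with Lemma \ref{lem-kernel-l-intro}(a) on the inner piece and from boundedness of $u$ combined with Lemma \ref{lem-kernel-l-intro}(b) on the outer piece.

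For part (a)(ii), the strategy is to show that the family $\{\cL u:\cL\in\fL(\ld,\Ld,l)\}$ is equicontinuous on compact subsets of $\Omega$ whenever $u$ is bounded in $\R^n$ and $C^{1,1}$ in $\Omega$; continuity of $\cI u=\inf_\beta\sup_\alpha\cL_{\alpha\beta}u$ is then immediate, since any infimum of suprema of an equicontinuous family inherits the common modulus. For nearby $x,x'\in\Omega$, I would write $\cL u(x)-\cL u(x')$ as an integral over $B_r$ plus an integral over $\R^n\setminus B_r$. Using the uniform $C^{1,1}(\Omega)$-constant $M$ of $u$, the inner integral is bounded by $C\,M(2-\sm)\int_0^r s\,l(s)\,ds\lesssim M\,r^2 l(r)$ via Lemma \ref{lem-kernel-l-intro}(a), uniformly in $\cL$, and can be made arbitrarily small by choosing $r$ small. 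The outer integral is continuous in $x$ by dominated convergence, using boundedness and uniform continuity of $u$ on compacts and Lemma \ref{lem-kernel-l-intro}(b) to control the tail mass of the kernel.

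For part (b), I would use the nonlocal touching argument of \cite[Section 4]{CS1}. Given $\varphi\in C^{1,1}(x)$ touching $u$ from above on a small neighborhood $N$, define $v$ to equal $\varphi$ on $N$ and $u$ on $\R^n\setminus N$; then $v$ is bounded and $C^{1,1}$ at $x$, and $v\geq u$ everywhere with $v(x)=u(x)$, so $\mu(u,x,y)\leq\mu(v,x,y)$ pointwise. Part (a) then shows that each $\cL_{\alpha\beta}v(x)$ is classically defined and that $\cL_{\alpha\beta}u(x)$ is a classically defined element of $[-\infty,+\infty)$, dominated by $\cL_{\alpha\beta}v(x)$, which in turn makes $\cI u(x)$ meaningful in the classical sense. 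The lemma stated immediately before the present one---extending the class of admissible test functions from $C^2$ to $C^{1,1}(x)$ by $C^2$-regularization---yields $\cI v(x)\geq f(x)$, and the ellipticity from part (a)(i) combined with a shrinking-$N$ limit transfers this to $\cI u(x)\geq f(x)$. The main delicacy I expect is precisely this last transfer: verifying that the excess mass contributed by replacing $u$ with $\varphi$ on $N$ vanishes uniformly in $\cL\in\fL(\ld,\Ld,l)$ as $N\downarrow\{x\}$, which is where the quantitative uniformity in Lemma \ref{lem-kernel-l-intro} is essential.
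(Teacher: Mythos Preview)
Your proposal is correct and follows essentially the same route as the paper, which does not give an independent proof but defers to \cite[Sections 3--4]{CS1} with the only modification being that the pointwise kernel bound $|y|^{-n-\sm}$ is replaced by $(2-\sm)l(|y|)/|y|^n$ and handled via Lemma~\ref{lem-kernel-l-intro}; this is exactly what you do.

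One small wording issue in part (b): you write that ``the ellipticity from part (a)(i) combined with a shrinking-$N$ limit transfers this to $\cI u(x)\geq f(x)$,'' but (a)(i) as stated requires \emph{both} functions to be $C^{1,1}$ at $x$, and $u$ need not be. The transfer you actually want---and which you describe correctly in the next sentence---is the direct monotone comparison $\cL_{\alpha\beta}u(x)\le\cL_{\alpha\beta}v_N(x)$ together with the observation that $\cL_{\alpha\beta}v_N(x)-\cL_{\alpha\beta}u(x)=\int_{N-x}\{\varphi-u\}(x+y)K_{\alpha\beta}(y)\,dy\to 0$ uniformly in $(\alpha,\beta)$ as $N\downarrow\{x\}$, by the uniform kernel bound \eqref{eq-kernel2-intro-op} and Lemma~\ref{lem-kernel-l-intro}(a). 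So the mechanism is pointwise domination plus uniform convergence, not the abstract ellipticity inequality; otherwise your outline is on target.
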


 
The  viscosity solutions to  the elliptic integro-differential equations   have nice    stability  properties  with respect to uniform convergence. Recalling the definition of $\Gamma$-convergence,   a slightly stronger     
  stability  of viscosity solutions  under $\Gamma$-convergence in Lemma \ref{lem-stab} is    quoted  from \cite[Lemma 4.5]{CS1}. 

%

 \begin{definition}[$\Gamma$-convergence]
 We say a  sequence of lower semi-continuous functions $u_k$    $\Gamma$-converges to $u$ in a set $\Omega\subset\R^n$ if it satisfies the  following conditions:
 \begin{enumerate}[(i)]
 \item For every sequence $x_k\to x$ in $\Omega$,  $$\displaystyle \liminf_{k\to\infty} u_k(x_k)\geq u(x).$$
 \item For every $x\in\Omega,$ there exists a sequence $x_k\to x$ in $\Omega$ such that 
 $$\limsup_{k\to\infty} u_k(x_k)=u(x).$$
 \end{enumerate} 
 \end{definition}

 \begin{lemma}[Stability]\label{lem-stab}
 Let $\cI$ be an elliptic operator with respect to the class $\fL(\ld,\Ld,l).$  For an open set $\Omega\subset \R^n,$  let $u_k$ be a sequence of  functions that are uniformly bounded in $\R^n$ 
 such that 
  \begin{enumerate}[(i)]
  \item $\cI u_k\leq f_k$ in $\Omega$  in the viscosity sense,
  \item $u_k\to u$ in the $\Gamma$-sense in $\Omega$,
  \item $u_k\to u$ a.e. in $\R^n$,
  \item $f_k \to f $ locally uniformly in $\Omega$ for some continuous function $f.$ 
  \end{enumerate}
  Then $\cI u\leq f$ in $\Omega$ in the viscosity sense. 
 \end{lemma}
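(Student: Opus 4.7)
The plan is to adapt the standard stability argument for viscosity solutions to the nonlocal setting, exploiting the ellipticity of $\cI$ with respect to $\fL(\ld,\Ld,l)$ together with the integrability properties of $l(|y|)/|y|^n$ guaranteed by Property \ref{hypo-kernel-l} and the integral bound at infinity (Lemma \ref{lem-kernel-l-intro}). Suppose $\varphi\in C^2$ touches $u$ from below strictly at $x_0\in\Omega$ in a compact neighborhood $\bar N\subset\Omega$, and set $v$ equal to $\varphi$ on $N$ and $u$ outside. The goal is to show $\cI v(x_0)\le f(x_0)$.

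First I would use the $\Gamma$-convergence $u_k\to u$ to produce minimizers $x_k$ of $u_k-\varphi$ on $\bar N$ with $x_k\to x_0$ and $c_k:=u_k(x_k)-\varphi(x_k)\to 0$. Indeed, condition (ii) in the definition of $\Gamma$-convergence provides a recovery sequence $y_k\to x_0$ with $u_k(y_k)\to u(x_0)$, forcing $\limsup c_k\le 0$; then condition (i) applied along $x_k\to x^*$ combined with the strictness of the touching forces $x^*=x_0$ and $\lim c_k=0$. For $k$ large, $x_k$ is interior to $\bar N$, and $\varphi_k:=\varphi+c_k$ is a classical test function touching $u_k$ from below at $x_k$ inside $N$. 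Setting $v_k$ equal to $\varphi_k$ on $N$ and to $u_k$ outside, the viscosity supersolution property gives $\cI v_k(x_k)\le f_k(x_k)$.

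The main step is the passage to the limit $\cI v_k(x_k)\to\cI v(x_0)$. I would write
\[
\cI v_k(x_k)-\cI v(x_0)=\bigl[\cI v_k(x_k)-\cI v(x_k)\bigr]+\bigl[\cI v(x_k)-\cI v(x_0)\bigr],
\]
where the second bracket tends to zero by the continuity property (ii) of the elliptic operator (since $v\in C^{1,1}(N)$). For the first bracket, ellipticity yields
\[
\cM^-_{\fL(\ld,\Ld,l)}(v_k-v)(x_k)\le \cI v_k(x_k)-\cI v(x_k)\le \cM^+_{\fL(\ld,\Ld,l)}(v_k-v)(x_k),
\]
so it suffices to show $\cM^{\pm}_{\fL(\ld,\Ld,l)}(v_k-v)(x_k)\to 0$. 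Observe that $v_k-v\equiv c_k$ on $N$ while $v_k-v=u_k-u$ on $\R^n\setminus N$. Hence for $|y|$ small enough that $x_k\pm y\in N$, the second difference $\mu(v_k-v,x_k,y)$ vanishes; for larger $|y|$ it is bounded uniformly by the $L^\infty$ bound on the $u_k$'s.

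The remaining integral over $\{|y|\ge r_0\}$ is controlled by a dominated convergence argument: after the change of variables $z=x_k+y$, the integrand becomes $(v_k-v)(z)K(z-x_k)$, where $K(z-x_k)\to K(z-x_0)$ pointwise away from the singularity and $(v_k-v)(z)\to 0$ a.e.\ in $\R^n$ by hypothesis (iii) together with $c_k\to 0$. The integrability of $l(|y|)/|y|^n$ on $\{|y|\ge r_0\}$, provided by Property \ref{hypo-kernel-l}(b) and Lemma \ref{lem-kernel-l-intro}(b), supplies the dominating function. Combining everything with $f_k(x_k)\to f(x_0)$ (by local uniform convergence and continuity of $f$) yields $\cI v(x_0)\le f(x_0)$, completing the proof. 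The most delicate point I anticipate is the dominated convergence step, specifically handling the varying center $x_k$ in the nonlocal integral; the change of variables trick above is the cleanest way to isolate the a.e.\ convergence of $u_k$ from the continuity of $K$ away from the origin.
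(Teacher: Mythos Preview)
The paper does not give an independent proof of this lemma; it simply quotes \cite[Lemma~4.5]{CS1} and remarks that the argument carries over verbatim once one has Property~\ref{hypo-kernel-l} and the integrability of $l(|y|)/|y|^n$ away from the origin (Lemma~\ref{lem-kernel-l-intro}). Your proposal is precisely that Caffarelli--Silvestre argument spelled out, and the overall scheme---produce touching points $x_k\to x_0$ via $\Gamma$-convergence, use ellipticity to reduce $\cI v_k(x_k)-\cI v(x_k)$ to $\cM^{\pm}_{\fL}(v_k-v)(x_k)$, and pass to the limit in the tail integral---is correct.

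One point deserves care. You write that after the substitution $z=x_k+y$ ``$K(z-x_k)\to K(z-x_0)$ pointwise away from the singularity''. In this paper $l$ is only assumed measurable and locally bounded, not continuous, so this pointwise convergence can fail. Fortunately it is not needed: what you actually use is that $(v_k-v)(z)\to 0$ a.e.\ together with a uniform (in $k$) integrable majorant for the kernel factor. The latter follows from Property~\ref{hypo-kernel-l}(a) (and Property~\ref{hypo-kernel-l-infty}, or alternatively a tail truncation at radius $R$ using the finiteness of $\int_{|y|\ge r_0} l(|y|)|y|^{-n}\,dy$), since for $z\notin N$ and $|x_k-x_0|$ small the ratio $|z-x_k|/|z-x_0|$ stays in a compact subinterval of $(0,\infty)$, whence $l(|z-x_k|)/|z-x_k|^n\le C\,l(|z-x_0|)/|z-x_0|^n$. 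With this correction your dominated convergence step goes through, and the rest of your argument is fine.
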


 As a corollary, we   obtain the stability property under uniform convergence. 
 
 \begin{cor}
 Let $\cI$ be an elliptic operator with respect to the class $\fL(\ld,\Ld,l).$  For an open set $\Omega\subset \R^n,$  let $u_k\in C(\Omega)$ be a sequence of  functions that are uniformly bounded in $\R^n$   such that 
  \begin{enumerate}[(i)]
  \item $\cI u_k= f_k$ in $\Omega$ in the viscosity sense,
  \item $u_k\to u$ locally uniformly in $\Omega$,
  \item $u_k\to u$ a.e. in $\R^n$,
  \item $f_k \to f $ locally uniformly in $\Omega$ for some continuous function $f.$ 
  \end{enumerate}
  Then $\cI u= f$ in $\Omega$ in the viscosity sense. 
 \end{cor}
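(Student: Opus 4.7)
The plan is to derive this corollary directly from Lemma \ref{lem-stab} by observing that locally uniform convergence is a strong enough mode of convergence to supply both the $\Gamma$-type hypotheses needed for the sub- and super-solution inequalities. Since $\cI u = f$ in the viscosity sense means both $\cI u \leq f$ and $\cI u \geq f$ hold, and since the viscosity hypothesis $\cI u_k = f_k$ decomposes as the two one-sided inequalities $\cI u_k \leq f_k$ and $\cI u_k \geq f_k$, the proof naturally splits into two symmetric parts.

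First I would argue that locally uniform convergence is stronger than $\Gamma$-convergence in both the lower and upper sense. If $x_k \to x$ in $\Omega$, then for any compact neighborhood $K \subset \Omega$ of $x$ containing all but finitely many $x_k$, we have
\[
|u_k(x_k) - u(x)| \leq \|u_k - u\|_{L^\infty(K)} + |u(x_k) - u(x)|,
\]
and both terms tend to zero (the second because $u$ is continuous on $\Omega$, being the locally uniform limit of continuous functions). Hence $u_k(x_k) \to u(x)$, which in particular yields both $\liminf u_k(x_k) \geq u(x)$ and $\limsup u_k(x_k) \leq u(x)$. Condition (ii) in the definition of $\Gamma$-convergence is trivially satisfied by taking the constant sequence $x_k = x$.

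Next, I would apply Lemma \ref{lem-stab} to the sequence $\{u_k\}$ together with the subsolution side $\cI u_k \leq f_k$ of the equation. The hypotheses are met: $u_k$ is continuous (hence lower semicontinuous), uniformly bounded in $\R^n$, $\Gamma$-converges to $u$ in $\Omega$, converges to $u$ a.e.\ in $\R^n$, and $f_k \to f$ locally uniformly. Lemma \ref{lem-stab} then gives $\cI u \leq f$ in $\Omega$ in the viscosity sense. For the reverse inequality, I would invoke the symmetric version of Lemma \ref{lem-stab} for subsolutions (upper semicontinuous envelopes and upper $\Gamma$-convergence), which is obtained by reversing inequalities throughout the statement and proof in \cite{CS1}; the $u_k$ are continuous so they serve as their own upper semicontinuous envelopes, and by Step 1 the upper $\Gamma$-convergence hypothesis is verified. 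This yields $\cI u \geq f$ in $\Omega$, and combining both inequalities gives $\cI u = f$ in the viscosity sense.

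The only potential obstacle is the appeal to the symmetric (subsolution) version of Lemma \ref{lem-stab}, which is not stated explicitly in the excerpt. However, this is a routine dualization: ellipticity with respect to $\fL(\ld,\Ld,l)$ is symmetric between $\cM^+$ and $\cM^-$, and the proof of the supersolution version carries over verbatim after flipping inequality signs and replacing lower semicontinuous envelopes with upper semicontinuous ones. In the setting of locally uniform convergence the two one-sided $\Gamma$-convergence notions both collapse to ordinary pointwise convergence along arbitrary sequences $x_k \to x$, so no separate verification is needed; the corollary follows immediately.
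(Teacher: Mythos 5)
Your proposal is correct and follows exactly the route the paper intends: the corollary is stated as an immediate consequence of Lemma \ref{lem-stab}, obtained by noting that locally uniform convergence of continuous functions implies $\Gamma$-convergence (in both the lower and upper sense) and then applying the stability lemma and its symmetric subsolution counterpart to the two one-sided inequalities. Your verification that $u_k(x_k)\to u(x)$ for any $x_k\to x$, and your remark on dualizing Lemma \ref{lem-stab}, supply precisely the details the paper leaves implicit.
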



Lemma \ref{lem-comparison}  quoted from \cite[Theorem 5.9]{CS1} states that the difference of   two viscosity solutions  solves an equation in the same ellipticity class.  In the proof,  Jensen's approach  \cite{J}  using the inf- and sup-convolutions   was employed to compare two viscosity solutions to the   fully nonlinear elliptic integro-differential equations (see also \cite{A}); 
we refer to \cite[Chapter 5]{CC} for the local case. 
 
 \begin{lemma}\label{lem-comparison}  
 Let $\cI$ be an elliptic operator with respect to the class $\fL(\ld,\Ld,l).$  For an open set $\Omega\subset \R^n,$  let $u$ and $v$  be  bounded in $\R^n$   such that 
   $$\cI u\geq f,\quad\mbox{and} \quad \cI v\leq g\quad\mbox{in $\Omega$}$$ in the viscosity sense for two continuous functions $f$ and $g$.
  Then $$\cM^+_{\fL(\ld,\Ld,l)}(u-v)\geq f-g\quad\mbox{in $\Omega$}$$ in the viscosity sense. 
 \end{lemma}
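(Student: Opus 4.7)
The plan is to follow Jensen's sup/inf-convolution approach, adapted to the nonlocal setting, to combine the subsolution property of $u$ and the supersolution property of $v$ into a statement about their difference. The main issue is that viscosity inequalities cannot be subtracted pointwise; we need a device that regularizes both functions enough to make them punctually twice differentiable almost everywhere, while preserving the one-sided equations. Set
\[
u^{\epsilon}(x) := \sup_{y\in\R^n}\Bigl(u(y)-\tfrac{|x-y|^2}{\epsilon}\Bigr),\qquad v_{\epsilon}(x) := \inf_{y\in\R^n}\Bigl(v(y)+\tfrac{|x-y|^2}{\epsilon}\Bigr),
\]
which are semiconvex and semiconcave respectively, Lipschitz with constants independent of $\epsilon$, and $\Gamma$-converge (hence locally uniformly, since $u,v$ are upper/lower semicontinuous and bounded) to $u,v$ as $\epsilon\downarrow 0$.

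The first step would be to show that $u^{\epsilon}$ is still a viscosity subsolution of $\cI w \ge f-\omega(\epsilon)$ on a slightly shrunken subdomain $\Omega_{\epsilon}\Subset\Omega$, and symmetrically for $v_{\epsilon}$, where $\omega(\epsilon)\to 0$. In the local case this follows from the observation that $u^{\epsilon}$ is the supremum over translates of $u$ shifted by $|z|\le C\sqrt{\epsilon}$. In the nonlocal case one needs to note that each translate $u(\cdot+z)-|z|^2/\epsilon$ is still a subsolution of $\cI w\ge f(\cdot+z)$, and then pass the $\sup$ inside using the stability under $\Gamma$-convergence provided by Lemma~\ref{lem-stab}; the dependence of the right-hand side on $z$ introduces only an $\omega(\epsilon)$-error by continuity of $f$.

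The second step is the core of Jensen's argument. Since $u^{\epsilon}$ is semiconvex and $v_{\epsilon}$ is semiconcave, Alexandroff's theorem says both are punctually second-differentiable at a.e.\ point. A version of Jensen's lemma then produces, near any strict maximum point $x_0$ of $(u^{\epsilon}-v_{\epsilon})-\varphi$ (where $\varphi$ is a $C^2$ test function touching $u-v$ from above at $x_0$), a point $x_{\epsilon}$ at which both $u^{\epsilon}$ and $v_{\epsilon}$ admit touching paraboloids from above and below and are punctually $C^{1,1}$. At such a point $\cI u^{\epsilon}(x_{\epsilon})$ and $\cI v_{\epsilon}(x_{\epsilon})$ are defined in the classical sense by part (a)(ii) of Lemma~\ref{lem-bellman-property}, and the subsolution/supersolution inequalities read $\cI u^{\epsilon}(x_{\epsilon})\ge f(x_{\epsilon})-\omega(\epsilon)$ and $\cI v_{\epsilon}(x_{\epsilon})\le g(x_{\epsilon})+\omega(\epsilon)$.

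Finally, applying the ellipticity of $\cI$ from Definition~\ref{def-elliptic-op}(iii) to the decomposition $u^{\epsilon}=v_{\epsilon}+(u^{\epsilon}-v_{\epsilon})$ at $x_{\epsilon}$ yields
\[
\cM^{+}_{\fL(\ld,\Ld,l)}(u^{\epsilon}-v_{\epsilon})(x_{\epsilon})\;\ge\;\cI u^{\epsilon}(x_{\epsilon})-\cI v_{\epsilon}(x_{\epsilon})\;\ge\;f(x_{\epsilon})-g(x_{\epsilon})-2\omega(\epsilon).
\]
The plan is then to let $\epsilon\to 0$: one has $x_{\epsilon}\to x_0$ by standard perturbation of maximum points, and $u^{\epsilon}-v_{\epsilon}\to u-v$ locally uniformly and a.e.\ in $\R^n$, so by the stability Lemma~\ref{lem-stab} applied to the linear extremal operator $\cM^{+}_{\fL(\ld,\Ld,l)}$, we conclude $\cM^{+}_{\fL(\ld,\Ld,l)}(u-v)\ge f-g$ at $x_0$ in the viscosity sense. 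The main obstacle I anticipate is the first step: establishing that the sup-convolution preserves the subsolution property in the nonlocal setting, because the operator samples values of $u^{\epsilon}$ globally on $\R^n$, so the usual translation-and-infimum trick from the local theory must be supplemented by a careful use of Lemma~\ref{lem-stab} and uniform tail estimates based on Lemma~\ref{lem-kernel-l-intro} to control the integrands at infinity uniformly in $\epsilon$.
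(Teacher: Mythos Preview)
Your proposal is correct and follows precisely the approach the paper invokes, namely Jensen's sup/inf-convolution argument from \cite[Theorem~5.9]{CS1}. One small correction: the fact that $\cI u^{\epsilon}(x_{\epsilon})$ is classically defined at a point of punctual $C^{1,1}$-ness should be attributed to Definition~\ref{def-elliptic-op}(i) together with the lemma immediately following it, not to Lemma~\ref{lem-bellman-property}, which is stated only for operators of inf-sup type.
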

 
 
 The comparison principle for the elliptic integro-differential operators  as  in \cite[Theorem 5.2]{CS1}  follows from  Lemma \ref{lem-comparison} with the help of  a barrier function given in Lemma \ref{lem-barrier-comparison};  see  also \cite[Assumption 5.1 and Lemma 5.10]{CS1}. 
 
 \begin{lemma}\label{lem-barrier-comparison}
 For a given $R\geq1,$ there exists $\delta_R>0$ such that the function $\varphi_R (x):=\min\left(1, \frac{|x|^2}{4R^2}\right)$ satisfies 
 $$\cM^-_{\fL(\ld,\Ld,l)}\varphi_R\geq \delta_R \quad\,\mbox{in $B_R$.} $$
 \end{lemma}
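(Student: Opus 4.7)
The plan is to prove that $\mu(\varphi_R,x,y) := \varphi_R(x+y)+\varphi_R(x-y)-2\varphi_R(x) \ge 0$ for every $x\in B_R$ and every $y\in\R^n$, so that $\cM^-_{\fL(\ld,\Ld,l)}\varphi_R$ reduces to a manifestly positive integral; a quantitative lower bound then follows from Lemma~\ref{lem-kernel-l-intro}(a). As a preliminary remark, $\varphi_R\in C^{1,1}(\R^n)$: inside $B_{2R}$ its Hessian is $(2R^2)^{-1}I$, outside $B_{2R}$ it vanishes, and the gradients match at $|x|=2R$. Combined with Lemma~\ref{lem-kernel-l-intro}, this guarantees that $\cM^-_{\fL(\ld,\Ld,l)}\varphi_R(x)$ is classically defined for every $x$.

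To establish $\mu\ge 0$, I would split into three geometric cases according to whether $x\pm y$ lie inside or outside $B_{2R}$. If both points lie in $B_{2R}$, a direct computation yields $\mu=|y|^2/(2R^2)\ge 0$. If both lie outside, then $\mu = 2-2\varphi_R(x) \ge 2-1/2=3/2$, using $\varphi_R(x)\le |x|^2/(4R^2)\le 1/4$ on $B_R$. In the mixed case where, say, $\varphi_R(x+y)=1$ and $\varphi_R(x-y)=|x-y|^2/(4R^2)$, the same upper bound on $\varphi_R(x)$ gives
\[
\mu \;=\; 1+\frac{|x-y|^2}{4R^2}-\frac{2|x|^2}{4R^2}\;\ge\; 1-\frac12 \;=\; \frac12,
\]
and symmetrically when $x-y$ is the outside point. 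Hence $\mu^-(\varphi_R,x,\cdot)\equiv 0$ on $B_R$, so
\[
\cM^-_{\fL(\ld,\Ld,l)}\varphi_R(x) \;=\; (2-\sm)\ld \int_{\R^n}\mu(\varphi_R,x,y)\,\frac{l(|y|)}{|y|^n}\,dy.
\]

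To conclude, I would restrict the integral to $|y|\le 1$, which under $R\ge 1$ forces $x\pm y\in B_{2R}$, so that $\mu=|y|^2/(2R^2)$ on that region. This gives, with $c_n:=|S^{n-1}|$,
\[
\cM^-_{\fL(\ld,\Ld,l)}\varphi_R(x)\;\ge\; \frac{(2-\sm)\ld\, c_n}{2R^2}\int_0^1 r\,l(r)\,dr.
\]
Applying Lemma~\ref{lem-kernel-l-intro}(a) at $r=1$ together with the normalization $l(1)=1$ from Property~\ref{hypo-kernel-l}(c) yields $\int_0^1 r\,l(r)\,dr\ge 1/\bigl(2a_0(2-\sm)\bigr)$, so the choice $\delta_R := \ld c_n/(4a_0 R^2)>0$ works.

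The main (and essentially only) obstacle is the positivity claim $\mu\ge 0$; the delicate point is the mixed case, which requires both the cap value $\varphi_R\equiv 1$ outside $B_{2R}$ \emph{and} the hypothesis $x\in B_R$ (rather than merely $x\in B_{2R}$) in order to absorb the $-2\varphi_R(x)$ term with room to spare. This is precisely why the statement restricts to $B_R$: on the larger ball $B_{2R}$ the mixed-case estimate would degenerate.
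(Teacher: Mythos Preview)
Your proof is correct and follows essentially the same route as the paper: show $\mu(\varphi_R,x,y)\ge 0$ for $x\in B_R$ by the same geometric case split (the paper merges your ``both outside'' and ``mixed'' cases via $\varphi_R(x-y)\ge 0$), then restrict the integral to a small ball where $\mu=|y|^2/(2R^2)$. The only difference is cosmetic: you compute an explicit $\delta_R$ via Lemma~\ref{lem-kernel-l-intro}(a), whereas the paper simply declares the restricted integral to be $\delta_R>0$ without evaluating it.
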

 \begin{proof}
 Let $x\in B_{R}.$ If $x\pm y\in B_{2R}, $ then we have $\mu\left(\varphi_R,x,y\right)= \frac{|y|^2}{2R^2}.$ If $x+y\not\in B_{2R},$ then $ \mu\left(\varphi_R,x,y\right)\geq 1- \frac{|x|^2}{2R^2}\geq \frac{1}{2}.$ Thus it follows that  for  $x\in B_R,$ 
 $$\cM^-\varphi_R(x)=(2-\sm)\ld \int_{\R^n} \mu\left(\varphi_R,x,y\right)\frac{l(|y|)}{|y|^n}dy\geq(2-\sm) \ld \int_{B_R}\frac{ |y|^2}{2R^2}\frac{l(|y|)}{|y|^n}dy
 =: \delta_R>0. 
 $$
 \end{proof}
 Lastly, we   state the comparison principle for  the fully nonlinear  elliptic integro-differential operators with respect to $\fL(\ld,\Ld,l)$; the proof is the same as    one for   Theorem 5.2 of \cite{CS1}. 
 \begin{thm}[Comparison principle]
 Let $\cI$ be an elliptic operator with respect to the class $\fL(\ld,\Ld,l).$  For a bounded  open  set $\Omega\subset \R^n,$  let $u$ and $v$  be  bounded in $\R^n$   such that 
  \begin{enumerate}[(i)]
   \item $\cI u\geq f$ and $\cI v\leq f$ in $\Omega$ in the viscosity sense for  some continuous functions $f$,
   \item $u\leq v$ in $\R^n\setminus \Omega$.
  \end{enumerate}
  Then     $u\leq v$ in $\Omega$.
 \end{thm}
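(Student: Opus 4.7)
My plan is to reduce the statement to a nonlocal maximum principle for the difference $w:=u-v$, and to exclude a positive interior maximum by combining the linearization in Lemma~\ref{lem-comparison} with the interior bound from the barrier of Lemma~\ref{lem-barrier-comparison}. A direct application of Lemma~\ref{lem-comparison} to the pair $(u,v)$ with common right-hand side $f$ only yields $\cM^+_{\fL(\ld,\Ld,l)}(u-v)\ge 0$ in $\Omega$ in the viscosity sense, which is not enough on its own, since at an interior maximum the nonlocal Pucci operator is automatically $\le 0$. The perturbation step will upgrade the right-hand side $0$ to a strictly positive $\epsilon\delta_R$, after which the exterior information $u\le v$ on $\R^n\setminus\Omega$ will give the strict negative bound needed for the contradiction.

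Fix $R\ge 1$ with $\ol{\Omega}\subset B_R$, and let $\varphi_R$ and $\delta_R>0$ be as in Lemma~\ref{lem-barrier-comparison}. For $\epsilon>0$ small, set $\tilde v:=v+\epsilon(1-\varphi_R)$. Since $0\le\varphi_R\le 1$ with $\varphi_R\equiv 1$ on $\R^n\setminus B_{2R}$, we have $\tilde v\ge v\ge u$ on $\R^n\setminus\Omega$, while $\tilde v=v$ outside $B_{2R}$. Because constants are annihilated by each $\cL\in\fL(\ld,\Ld,l)$, one has $\cM^+_{\fL(\ld,\Ld,l)}(\epsilon(1-\varphi_R))=-\epsilon\,\cM^-_{\fL(\ld,\Ld,l)}\varphi_R\le-\epsilon\delta_R$ in $B_R$; using the ellipticity property (iii) of $\cI$ in the viscosity framework (applied at any test function touching $\tilde v$ from below) I get $\cI\tilde v\le f-\epsilon\delta_R$ in $\Omega$ in the viscosity sense. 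A second application of Lemma~\ref{lem-comparison} then yields
\[
\cM^+_{\fL(\ld,\Ld,l)}(u-\tilde v)\ge\epsilon\delta_R>0 \quad\text{in $\Omega$ in the viscosity sense,}
\]
together with $u\le\tilde v$ on $\R^n\setminus\Omega$.

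I then claim $u\le\tilde v$ in $\Omega$; letting $\epsilon\downarrow 0$ concludes $u\le v$ in $\Omega$. Suppose for contradiction this fails. The function $u-\tilde v$ is upper semicontinuous on $\ol{\Omega}$ and $\le 0$ on $\R^n\setminus\Omega$, so its supremum $M'>0$ is attained at some interior $x^*\in\Omega$, and the constant $M'$ is a $C^{1,1}$ test function touching $u-\tilde v$ from above at $x^*$. The viscosity inequality forces the replaced function $\tilde w'$ (equal to $M'$ in a small neighborhood of $x^*$, equal to $u-\tilde v$ elsewhere) to satisfy $\cM^+_{\fL(\ld,\Ld,l)}\tilde w'(x^*)\ge\epsilon\delta_R$ in the classical sense. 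But $\tilde w'\le M'$ everywhere, so $\mu(\tilde w',x^*,y)\le 0$ for every $y$, and for $|y|\ge 2R$ both points $x^*\pm y$ lie outside $\Omega$, hence $\tilde w'(x^*\pm y)\le 0<M'$ and $\mu(\tilde w',x^*,y)\le-2M'$ on that region. Since $\mu^+\equiv 0$, the Pucci formula reduces to
\[
\cM^+_{\fL(\ld,\Ld,l)}\tilde w'(x^*)=-(2-\sm)\ld\int_{\R^n}\mu^-(\tilde w',x^*,y)\,\frac{l(|y|)}{|y|^n}\,dy,
\]
and the tail $\int_{|y|\ge 2R}l(|y|)/|y|^n\,dy$ is strictly positive (and finite, by Lemma~\ref{lem-kernel-l-intro}). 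Thus $\cM^+_{\fL(\ld,\Ld,l)}\tilde w'(x^*)<0$, contradicting the viscosity lower bound $\epsilon\delta_R>0$.

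The main technical point I expect to have to argue carefully is the last one: turning the exterior sign information $u\le\tilde v$ on $\R^n\setminus\Omega$ into a \emph{quantitative} strict negativity of the Pucci operator at the interior maximum. This is the nonlocal analogue of the strong maximum principle, and everything else in the proof — the ellipticity of $\cI$, the constant-killing property of $\cM^{\pm}$, and the barrier $\varphi_R$ — serves only to reduce to this single pointwise nonlocal observation, exactly in the spirit of Theorem~5.2 of \cite{CS1}.
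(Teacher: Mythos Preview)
Your argument is correct and follows the same approach as the paper, which defers to \cite[Theorem~5.2]{CS1}: apply Lemma~\ref{lem-comparison}, perturb with the barrier of Lemma~\ref{lem-barrier-comparison}, and derive a contradiction at an interior maximum. One minor redundancy worth noting: once the perturbation yields $\cM^+(u-\tilde v)\ge\epsilon\delta_R>0$ in the viscosity sense, the bare observation $\mu(\tilde w',x^*,y)\le 0$ at the maximum already gives $\cM^+\tilde w'(x^*)\le 0<\epsilon\delta_R$, so the quantitative tail bound is unnecessary (and conversely, your tail bound alone would give $\cM^+\tilde w'(x^*)<0$, making the barrier perturbation unnecessary --- either mechanism suffices, and the paper's intended route is the former).
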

 
\section{Regularity estimates for  integro-differential  
operators  with regularly varying kernels}\label{sec-harnack}

This section is  mainly devoted to  proving Theorems \ref{thm-Harnack-regularly-varying-kernel-intro} and \ref{thm-Holder-intro}, which   will provide the  Harnack inequality and H\"older estimate  for fully nonlinear   elliptic integro-differential operators associated with symmetric, regularly varying kernel at zero  and infinity 
as mentioned  in Remark \ref{rmk-Harnack}.   
Throughout  this   section,   let  $0<\ld \leq \Ld< +\infty,$ and let a measurable function $l:(0,+\infty)\to (0,+\infty)$  be  locally bounded away from $0$ and $+\infty$, and    satisfy Properties \ref{hypo-kernel-l} and \ref{hypo-kernel-l-infty} with the   positive constants $\sm\in[\sm_0,2)$,  $a_0\geq 1, a_\infty\geq 1,$  and $\rho\in(0,1)$  for a given      $\sm_0\in(0,2)$.   As in Subsection \ref{subsec-operators},  let $\fL\left(\ld,\Ld, l\right)$ be  the class of all linear   integro-differential operators  
\begin{equation*}
\cL u(x)=\int_{\R^n}\mu(u,x,y)K(y)\,dy
\end{equation*} 
 with  the  kernels  $K$     satisfying
 \begin{equation*}
   (2-\sm)\ld\f{ l (|y|)}{|y|^{n}}\le
K(y)\le  (2-\sm)\Ld\f{ l(|y|)}{|y|^{n}},
\end{equation*}
 where 
 $\mu(u,x,y):=u(x+y)+u(x-y)-2u(x).$ 
In order to prove   the  uniform   regularity estimates for  a class of viscosity solutions  to the  elliptic   integro-differential equations  with respect to the class  $\fL(\ld,\Ld,l), $    we will deal with  the Pucci type extremal operators $\cM^{\pm}_{\fL(\ld,\Ld, l)}$, 
  defined as  \eqref{eq-Pucci} and simply denoted by $\cM^{\pm}$, since the elliptic operator $\cI$
 satisfies   that 
   $$\cM^-_{\fL\left(\ld,\Ld, l\right)}   \leq \cI- \cI[0]\leq \cM^+_{\fL\left(\ld,\Ld, l\right)}.$$ 

 Before we proceed to   regularity estimates  for viscosity solutions to nonlocal equations, we study the     important  properties of  the given  function $l$ satisfying  Properties \ref{hypo-kernel-l}  and \ref{hypo-kernel-l-infty}, which will be used later. 
 \begin{lemma} \label{lem-kernel-l}
 Let a  measurable   function $l:(0,+\infty)\to (0,+\infty)$  be  locally bounded away from $0$ and $+\infty$, and satisfy  Properties \ref{hypo-kernel-l}  and \ref{hypo-kernel-l-infty} with   positive constants $\sm\in(0,2),$ $a_0\geq 1, a_\infty\geq 1,$  and $\rho\in(0,1).$ Then we have the following. 
 \begin{enumerate}[(a)]
 \item For $r\in(0,1],$    
$$   {\color{black}\frac{1}{2a_0}}\frac{ r^2l(r)}{2-\sm} \leq  \int_0^rsl(s)ds \leq {\color{black}2a_0}\frac{ r^2l(r)}{2-\sm}.  $$

   \item  For $   r \in(0,1],$
$$ \int_0^rs^3l(s)ds\leq {\color{black}a_0}r^4l(r).$$

 \item 
For $r\in(0,1],$
 $$L(r):=\sm\int_r^1\frac{l(s)}{s}ds \geq   {\color{black} \frac{1}{2a_0^2}}\left(r^{-{\color{black}\sm/2}}-1\right).$$
 In particular, for $\sm\in[\sm_0,2)$,   we have 
 $$   {\color{black} \frac{1}{2a_0^2}}\left(r^{-{\color{black}\sm_0/2}}-1\right) \leq L(r) \leq 2a_0^2 r^{-2} ,\quad\forall r\in(0,1].$$

\item
      $$ {\sm}\int_1^{\infty}\frac{l(s)}{s}ds\leq {\color{black}2a_\infty}.$$
\end{enumerate}
\end{lemma}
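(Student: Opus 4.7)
\smallskip

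The plan is to prove all four parts by the same recipe: use Property \ref{hypo-kernel-l} (or Property \ref{hypo-kernel-l-infty}) to bound $l(s)$ pointwise by a pure power of $s$, and then integrate the power function explicitly. The only subtlety is to choose the correct direction of the max in the two-sided power bound, which depends on whether the ratio $s/r$ (resp.\ $s/1$) is $\le 1$ or $\ge 1$, and on the fact that the exponents $-\sigma\pm\delta$ are both negative. Throughout I will repeatedly use that $\delta\in[0,\tfrac12\min(2-\sigma,\sigma))$ guarantees $\sigma+\delta<2$, $\sigma-\delta>\sigma/2$, $2-\sigma-\delta>(2-\sigma)/2$, and $2-\sigma+\delta<\tfrac{3}{2}(2-\sigma)$, which is exactly what is needed to keep the resulting denominators controlled and away from zero.

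For parts (a) and (b), I fix $r\in(0,1]$ and apply Property \ref{hypo-kernel-l}(a) with reference point $r$. For $s\in(0,r]$ the ratio $s/r\le 1$ and both exponents $-\sigma\pm\delta$ are negative, so the maximum is realized by $(s/r)^{-\sigma-\delta}$; hence $l(s)\le a_0 l(r)(s/r)^{-\sigma-\delta}$. Multiplying by $s$ (resp.\ $s^3$) and integrating gives
\[
\int_0^r s^{k}l(s)\,ds\le a_0 l(r) r^{\sigma+\delta}\int_0^r s^{k-\sigma-\delta}\,ds=\frac{a_0\,r^{k+1}l(r)}{k+1-\sigma-\delta},
\]
which yields the upper bound in (a) (taking $k=1$ and using $2-\sigma-\delta>(2-\sigma)/2$) and the bound in (b) (taking $k=3$ and using $4-\sigma-\delta>2$). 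For the lower bound in (a), I swap $r$ and $s$ in Property \ref{hypo-kernel-l}(a): for $s\le r$ we have $r/s\ge 1$, so the max is $(r/s)^{-\sigma+\delta}$, giving $l(s)\ge a_0^{-1}l(r)(r/s)^{\sigma-\delta}$. Integrating produces $\frac{r^2 l(r)}{a_0(2-\sigma+\delta)}$, and $2-\sigma+\delta\le\tfrac{3}{2}(2-\sigma)$ delivers the constant $\tfrac{1}{2a_0}$.

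For part (c), the natural reference point is $r=1$, where the normalization $l(1)=1$ is available. Using Property \ref{hypo-kernel-l}(a) once in each direction for $s\in(0,1]$ yields the sandwich $a_0^{-1}s^{-\sigma+\delta}\le l(s)\le a_0 s^{-\sigma-\delta}$. Integrating the lower sandwich term,
\[
L(r)\ge\frac{\sigma}{a_0(\sigma-\delta)}\bigl(r^{-\sigma+\delta}-1\bigr)\ge\frac{1}{a_0}\bigl(r^{-\sigma+\delta}-1\bigr)\ge\frac{1}{a_0}\bigl(r^{-\sigma/2}-1\bigr),
\]
where the last inequality uses $-\sigma+\delta<-\sigma/2$ together with $r\le 1$; this is even stronger than the claimed bound $\tfrac{1}{2a_0^2}(r^{-\sigma/2}-1)$. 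Integrating the upper sandwich term gives $L(r)\le\frac{\sigma a_0}{\sigma+\delta}(r^{-\sigma-\delta}-1)\le a_0\,r^{-\sigma-\delta}$, and since $\sigma+\delta<2$ we have $r^{-\sigma-\delta}\le r^{-2}$ for $r\in(0,1]$, giving $L(r)\le a_0 r^{-2}\le 2a_0^2 r^{-2}$. The second half of (c) follows because $r^{-\sigma/2}\ge r^{-\sigma_0/2}$ when $\sigma\in[\sigma_0,2)$ and $r\le 1$.

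Part (d) is analogous but uses Property \ref{hypo-kernel-l-infty} with reference point $1$. For $s\ge 1$, the max is realized by the less negative exponent $-\sigma+\delta'$, so $l(s)\le a_\infty s^{-\sigma+\delta'}$. Then
\[
\sigma\int_1^\infty\frac{l(s)}{s}\,ds\le a_\infty\,\sigma\int_1^\infty s^{-\sigma+\delta'-1}\,ds=\frac{a_\infty\sigma}{\sigma-\delta'}\le 2a_\infty,
\]
since $\sigma-\delta'>\sigma/2$. The only place where something could go wrong is in tracking which of the two exponents in the maximum is active; once that is handled correctly the estimates are routine, and the constraint $\delta,\delta'<\tfrac12\min(2-\sigma,\sigma)$ is exactly what guarantees the constants stay bounded independently of $\sigma\in[\sigma_0,2)$.
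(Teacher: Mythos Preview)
Your proof is correct and follows essentially the same approach as the paper: bound $l(s)/l(r)$ by the appropriate power via Property~\ref{hypo-kernel-l}(a) (or Property~\ref{hypo-kernel-l-infty}), then integrate explicitly, using $\delta<\tfrac12\min(2-\sigma,\sigma)$ to control the denominators. Your treatment of part~(c) is in fact slightly cleaner than the paper's: you take reference point $1$ directly and obtain $L(r)\ge a_0^{-1}(r^{-\sigma/2}-1)$, whereas the paper first uses reference point $r$ and then separately bounds $l(r)$ via $l(1)$, incurring an extra factor and arriving only at the weaker constant $\tfrac{1}{2a_0^2}$.
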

 
\begin{proof}
Using Property \ref{hypo-kernel-l},  we have that  for $r\in(0,1],$ 
\begin{align*}
\int_0^r sl(s)ds &= l(r)\int_0^rs\frac{l(s)}{l(r)} ds \leq a_0 r^{\sm+\delta}l(r)\int_0^rs^{1-\sm-\delta }   ds\\
 &\leq a_0 r^{\sm+\delta}l(r)\frac{1}{2-\sm-\delta} r^{2-\sm-\delta }  \leq \frac{2a_0}{2-\sm} r^{2}l(r)  
\end{align*}
since $\delta\in\left[0, \frac{1}{2}\min( 2-\sm ,\sm)\right).$   In a similar way,  we deduce  that  for $r\in(0,1]$
$$   {\color{black}\frac{1}{2a_0}}\frac{ r^2l(r)}{2-\sm} \leq  \int_0^rsl(s)ds \leq {\color{black}2a_0}\frac{ r^2l(r)}{2-\sm}$$
and $$\displaystyle  \int_0^rs^3l(s)ds\leq {\color{black}a_0}r^4l(r).$$
Recalling from Property \ref{hypo-kernel-l} that for $r\in(0,1]$  $$L(r):= {\sm}\int_{r}^1 \frac{l(s)}{s}ds,$$  it  follows that
\begin{align*}
L(r) &=  {\sm}l(r)\int_r^1\frac{1}{s}\frac{l(s)}{l(r)} ds \\
&\geq \frac{\sm }{a_0}r^{\sm+\delta}l(r)\int_r^1s^{-1-\sm-\delta }   ds
= \frac{\sm }{a_0}r^{\sm+\delta}l(r)\frac{1}{\sm+\delta}\left( r^{-\sm-\delta } -1\right)  \\
 &\geq \frac{1 }{2a_0}r^{\sm+\delta}\frac{l(1)}{a_0} r^{-\sm+\delta}\left( r^{-\sm-\delta } -1\right) = \frac{1 }{2a_0^2 } \left( r^{-\sm+\delta } - r^{2\delta}\right)  
 \\&\geq \frac{1 }{2a_0^2 } \left( r^{-\sm/2 } -1\right) . 
\end{align*}
 Arguing in a similar way,,   we have 
 $$    \frac{1 }{2a_0^2 } \left( r^{-\sm/2 } -1\right)  \leq L(r) \leq 2a_0^2 r^{-2} ,\quad\forall r\in(0,1].$$
  Since  $l(1)=1$,   Property  \ref{hypo-kernel-l-infty}  yields that  
      $${\sm}\int_1^{\infty}\frac{l(s)}{s}ds\leq \sm a_\infty  \int_1^\infty s^{-1-\sm+\delta'}ds \leq a_\infty \frac{\sm}{\sm-\delta'}\leq 2a_\infty,$$
completing the proof.

\end{proof}
  
\subsection{Aleksandrov-Bakelman-Pucci type estimate}

First, we extend the nonlocal Aleksandrov-Bakelman-Pucci(ABP)      estimate  by Caffarelli and Silvestre \cite[Lemma 8.1]{CS1}  for   fully nonlinear elliptic  integro-differential operators   with respect to $\fL(\ld,\Ld,l).$

\begin{lemma}\label{lem-abp-ring} 
Let $R\in(0,1)$ and       $\rho_0\in(0,1)$.  Let     $r_k:=\rho_0 2^{-\frac{1}{2(2-\sm)}-k}R,$    and  $\cR_k(x):=B_{r_k}(x)\setminus B_{r_{k+1}}(x)$ for $k\in\N\cup\{0\}$  and   $x\in\R^n.$
Let  $u$ be a viscosity subsolution of     
$$\cM^+_{\fL(\ld,\Ld,l)} u= -f\quad\mbox{ on $B_R$}$$   
such that $u\leq 0$  in $\R^n\setminus B_{R},$ 
and let  $\Gamma$ be the concave envelope of $u^+:=\max\{u,0\}$ in $B_{3R}.$    Then there exists a uniform constant $\displaystyle \tilde C :=   \frac{c_na_0}{\ld \rho_0^4} \sup_{\sm\in[\sm_0,2)}\left( \frac{1-2^{-2(2-\sm)}}{2-\sm}\right)  >0$  such that for each  $x\in \left\{ u=\Gamma \right\}$  and $M>0,$  we find  $k\in\N\cup\{0\}$  
 satisfying    
\begin{equation}\label{eq-abp-ring}
\left|  \left\{y\in \cR_k(x):  u(y)<u(x)+(y-x)\cdot\D\Gamma(x)-Mr_k^2\right\}\right|\leq \frac{\tilde C}{l(R)R^2}\frac{f(x)}{M}\left|\cR_k(x)\right|,
\end{equation}
where $\D\Gamma(x)$    stands for an element of  the superdifferential of $\Gamma$ at $x,$ and $c_n>0$ depends only on dimension $n.$
 \end{lemma}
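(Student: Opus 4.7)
The plan is to follow the nonlocal ABP scheme of Caffarelli--Silvestre \cite[Lemma 8.1]{CS1}, adapted to the regularly varying kernel so that constants remain uniform in $\sm\in[\sm_0,2)$. The geometric input is: at a contact point $x\in\{u=\Gamma\}$ (which we may assume lies in $\overline{B_R}$), the supporting hyperplane $L(y):=u(x)+(y-x)\cdot\nabla\Gamma(x)$ of the concave envelope lies above $u$ on $B_{3R}$. Since $r_k\leq r_0=\rho_0\,2^{-1/(2(2-\sm))}R<\rho_0 R$, a short case analysis (using the concave-envelope bound on $B_{3R}$ when $|z|\leq 2R$, and the sign condition $u\leq 0$ off $B_R$ together with $u(x)\geq 0$ when $|z|>2R$) gives $\mu(u,x,z)\leq 0$ for \emph{every} $z\in\R^n$.

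To apply the subsolution property I would regularize $L$ to a genuine $C^{1,1}(x)$ test function touching $u$ from above at $x$: take a small neighborhood $N=B_\rho(x)$ with $\rho$ smaller than $r_{K+1}$ for every ring index $K$ to be used, let $\varphi=L$ on $N$, and glue $v:=\varphi$ on $N$, $v:=u$ off $N$. By Lemma \ref{lem-bellman-property}(b) this gives $\cM^+v(x)\geq -f(x)$ classically. Combined with $\mu^+(v,x,z)\equiv 0$ (since $\mu(v,x,z)\leq 0$ everywhere), the Pucci operator collapses to
\begin{equation*}
-f(x)\;\leq\;\cM^+ v(x)\;=\;-\ld(2-\sm)\int_{\R^n}\mu^-(v,x,z)\,\frac{l(|z|)}{|z|^n}\,dz.
\end{equation*}

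I then argue by contradiction: assume that for \emph{every} $k\geq 0$ the translate $A_k\subset B_{r_k}\setminus B_{r_{k+1}}$ of the bad set satisfies $|A_k|>\frac{\tilde C}{l(R)R^2}\frac{f(x)}{M}|\cR_k|$. On $A_k$, combining $u(x+z)<L(x+z)-Mr_k^2$ with $u(x-z)\leq L(x-z)$ and $\mu(L,x,z)=0$ yields $\mu^-(v,x,z)>Mr_k^2$. Property \ref{hypo-kernel-l}(a) on the dyadic ring $|z|\in[r_k/2,r_k]$ gives $l(|z|)/|z|^n\gtrsim l(r_k)/(a_0 r_k^n)$, while $|\cR_k|\asymp r_k^n$; hence each ring contributes at least a constant multiple of $\frac{\tilde C f(x)}{a_0\,l(R)R^2}\,r_k^2 l(r_k)$ to the integral. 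Summing, the essential step is the uniform lower bound
\begin{equation*}
(2-\sm)\sum_{k\geq 0}r_k^2 l(r_k)\;\gtrsim\;\rho_0^4\cdot\frac{2-\sm}{1-2^{-2(2-\sm)}}\cdot l(R)R^2,
\end{equation*}
obtained by combining the regular variation comparison of $l(r_k)$ with $l(R)$ via Property \ref{hypo-kernel-l}(a), the calibrated identity $r_0^{2-\sm}=\rho_0^{2-\sm}\cdot 2^{-1/2}\cdot R^{2-\sm}$, and the geometric sum $\sum_k 2^{-k(2-\sm)}$. Plugging back and choosing $\tilde C$ as in the statement yields a strict inequality $f(x)>f(x)$, the desired contradiction.

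The main technical obstacle is precisely the uniform control as $\sm\to 2^-$: the dyadic sum $\sum 2^{-k(2-\sm)}$ diverges like $1/(2-\sm)$, and this divergence must be absorbed by the $(2-\sm)$ prefactor in the Pucci operator together with the specific scaling $r_0\propto 2^{-1/(2(2-\sm))}R$, tuned exactly to cancel the $\sm$-dependent exponent in $r_0^{2-\sm}$. The remaining steps (the case analysis verifying $\mu(v,x,z)\leq 0$ off the rings, the standard regularization of $L$ near $x$ to a true $C^{1,1}(x)$ test function, and the bookkeeping producing the explicit form of $\tilde C$) are routine.
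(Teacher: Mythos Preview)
Your proposal is correct and follows essentially the same route as the paper: contact point $x$, sign analysis giving $\mu(u,x,z)\le 0$ everywhere, contradiction via summing the ring contributions using Property~\ref{hypo-kernel-l}(a) and the calibrated dyadic scale $r_k$. Two minor points of friction with the paper's execution are worth noting. First, your test-function step is slightly tangled: you cannot literally take $\rho<r_{K+1}$ ``for every ring index $K$ to be used'' since all $k\ge 0$ are used; the paper avoids this by invoking Lemma~\ref{lem-bellman-property}(b) directly to conclude that $\cM^+u(x)$ itself is defined classically and $\ge -f(x)$ (alternatively, truncate at finite $K$ and let $K\to\infty$). Second, the paper compares $l(|z|)$ to $l(R)$ in one step rather than via $l(r_k)$, and then uses $\delta<(2-\sm)/2$ to pass from the exponent $2-\sm+\delta$ to $2(2-\sm)$, which is what produces the factor $1-2^{-2(2-\sm)}$ in $\tilde C$ (your sketch writes $\sum_k 2^{-k(2-\sm)}$, which would give $1-2^{-(2-\sm)}$ instead). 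These are bookkeeping details; the structure and the uniformity mechanism you identify are exactly those of the paper.
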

\begin{proof} 
Let $x$ be a contact point, that is,  $x\in \{u=\Gamma \}\subset B_{R}.$ 
From Lemma \ref{lem-bellman-property}, $\cM^+u(x)$ can be  defined classically and   we have 
\begin{align*}
\cM^+ u(x)=(2-\sm)\int_{\R^n}\left\{\Lambda \mu ^+(u,x,z)-\lambda \mu ^-(u,x,z)\right\}\frac{l(|z|)}{|z|^n}dz\geq -f(x),
\end{align*}
where $\mu(u,x,z)=u(x+z)+u(x-z)-2u(x),$ and  $\mu ^{\pm}(u,x,z)=\max\{\pm \mu(u,x,z),0\}.$
 
 We note that $u(x)=\Gamma(x)>0.$ 
If $x+z\in B_{3R}$ and $x-z\in B_{3R}, $ then  we have $\mu(u,x,z)\leq0$ since $\Gamma $ lies above $u.$  If $x+z\not\in B_{3R},$  then  $x+z$ and $x-z$ do  not belong to $B_{R},$ which implies that $\mu(u,x,z)\leq0.$   Thus   it follows that   $\mu(u,x,z)\leq0$ for any $z\in\R^n$ and hence  
\begin{align*}
f(x)&\geq  (2-\sm)\lambda\int_{\R^n} \mu^-(u,x,z) \frac{l(|z|)}{|z|^n}dz\\
& \geq (2-\sm)\lambda\int_{B_{r_0}(0)} \mu ^-(u,x,z) \frac{l(|z|)}{|z|^n}dz =  (2-\sm) \lambda\sum_{k=0}^{+\infty}\int_{\cR_k(0)} \mu ^-(u,x,z) \frac{l(|z|)}{|z|^n}dz.
\end{align*}
The concavity of $\Gamma$ implies that if  $u(x+z)<u(x)+z \cdot\D\Gamma(x)-M r_k^2 $ for some $z\in B_{R}, $ then $\mu ^-(u,x,z)\geq M r_k^2.$ Indeed,  we have  that $x\pm z\in B_{3R} $ and  
\begin{align*}
\mu(u,x,z)&=u(x+z)+u(x-z)-2u(x)\leq u(x+z)+\Gamma(x-z)-2u(x)\\ 
&< \left\{u(x)+z \cdot\D\Gamma(x)-M r_k^2 \right\}+ \left\{\Gamma(x)-z\cdot\D \Gamma(x)\right\}-2u(x)=-M r_k^2.
\end{align*}

Suppose to the contrary  that \eqref{eq-abp-ring} is not true.   Then we have 
\begin{align*}
\frac{f(x)}{2-\sm}&\geq   \lambda\sum_{k=0}^{+\infty}\int_{\cR_k(0)} \mu ^-(u,x,z) \frac{l(|z|)}{|z|^n}dz=   \lambda l(R)\sum_{k= 0}^{+\infty}\int_{\cR_k(0)} \mu ^-(u,x,z) \frac{l(|z|)}{l(R)}\frac{1}{|z|^n}dz\\
&{\color{black}\geq}  \lambda l(R)\sum_{k= 0}^{+\infty}\int_{\cR_k(0)} \mu ^-(u,x,z) \frac{1}{a_0}\left(\frac{|z|}{R}\right)^{-\sm+ \delta}\frac{1}{r_k^n}dz\\
&{\color{black}\geq}  \lambda l(R)\frac{1}{a_0}  \sum_{k= 0}^{+\infty} \left(\frac{r_k}{R}\right)^{-\sm+ \delta }\frac{1}{r_k^n}\int_{\cR_k(0)} \mu ^-(u,x,z) dz\\
&{\color{black}\geq}  \lambda l(R)\frac{1}{a_0}  \sum_{k= 0}^{+\infty} \left(\frac{r_k}{R}\right)^{-\sm+ \delta }\frac{1}{r_k^n} \frac{\tilde C }{l(R)R^2}\frac{f(x)}{M}|{\cR_k(x)}| Mr_k^2\\
&{\color{black}=}  \lambda l(R)\frac{1}{a_0}  \sum_{k= 0}^{+\infty} \left(\frac{r_k}{R}\right)^{-\sm+ \delta }\frac{r_k^2}{r_k^n} \frac{\tilde C f(x)}{l(R)R^2}|{\cR_k(0)}| = \frac{\lambda c_n }{a_0}  \tilde C f(x) \sum_{k= 0}^{+\infty} \left(\frac{r_k}{R}\right)^{ 2-\sm+\delta} \\ 
&\geq  \frac{\lambda c_n}{ a_0}  \tilde C f(x) \sum_{k= 0}^{+\infty} \left(\frac{r_k}{R}\right)^{ 2-\sm+(2-\sm)/2}\geq  \frac{\lambda c_n}{ a_0}  \tilde C f(x) \sum_{k= 0}^{+\infty} \left(\frac{r_k}{R}\right)^{ 2(2-\sm)} \\ 
& =   \frac{ \lambda c_n}{ a_0}  \tilde C f(x) \frac{\rho_0^{2(2-\sm)}}{2\left(1-2^{-2(2-\sm)}\right)} \geq \frac{  \lambda c_n }{a_0}  \tilde C f(x) \frac{\rho_0^{ 4 }}{2\left(1-2^{-2(2-\sm)}\right)}
\end{align*}
since $0\leq \delta\leq (2-\sm)/2.$
By choosing $\displaystyle \tilde C\geq   a_0\frac{2 \left(1-2^{-2(2-\sm)}\right)}{\ld c_n  \rho_0^{4 }(2-\sm)}  $ which is   bounded above by a uniform constant  for $\sm\in[\sm_0,2),$  the result follows. 
 \end{proof}
In   the proof of  Lemma \ref{lem-abp-ring},   we observe  that  $f(x)$ is positive  for   $x\in \{u=\Gamma\}$.

 
\begin{lemma}\label{lem-grad-Gamma}
Under the same  assumption as  Lemma \ref{lem-abp-ring}, there exists uniform  constants $\ep_n\in(0,1)$ and $\displaystyle\tilde M:= {\tilde C}/{\ep_n}>0$ such that for  each $x\in\{u= \Gamma\},$ we find  some $r=r_k\leq \rho_0 2^{-\frac{1}{2(2-\sm)}}R$   which satisfies the following: 
\begin{enumerate}[(a)]
\item 
$$\left|  \left\{y\in B_r(x)\setminus B_{r/2}(x):  u(y)<u(x)+(y-x)\cdot\D\Gamma(x)-\frac{\tilde  M f(x)}{l(R)R^2}r^2\right\}\right|\leq \ep_n\left| B_r(x)\setminus B_{r/2}(x)\right|,$$
\item $$\Gamma(y)\geq u(x)+(y-x)\cdot\D\Gamma(x)-\frac{\tilde M f(x)}{l(R)R^2}r^2\quad\forall y\in B_{r/2}(x),$$
\item $$\left|\D\Gamma(B_{r/4}(x))\right|\leq  c_n\left(\frac{\tilde M f(x)}{l(R)R^2}\right)^n| B_{r/4}(x) |,$$
where $\tilde C>0 $ is the uniform  constant as in Lemma \ref{lem-abp-ring}, and  $\ep_n\in(0,1)$ and $c_n>0$ are  uniform constant depending only on $n.$
\end{enumerate}
\end{lemma}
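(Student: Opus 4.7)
The plan is to obtain (a) immediately from Lemma \ref{lem-abp-ring} by a suitable choice of $M$, to upgrade (a) to (b) via a chord/concave-envelope argument, and to deduce (c) from (b) via a classical gradient bound for concave functions that are nearly affine.

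For (a), set $M_0 := \tilde M f(x)/(l(R) R^2)$ with $\tilde M := \tilde C/\ep_n$, where $\ep_n \in (0, 1)$ is a universal constant to be pinned down in the proof of (b). With $M = M_0$, the right-hand side of \eqref{eq-abp-ring} reduces to exactly $\ep_n |\cR_k(x)|$, so the index $k \in \N \cup \{0\}$ produced by Lemma \ref{lem-abp-ring} furnishes a radius $r = r_k \leq \rho_0 2^{-1/(2(2-\sm))} R$ for which (a) holds.

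For (b), write $\ell(y) := u(x) + (y - x) \cdot \D\Gamma(x)$, and set $A := \{z \in \cR_k(x) : u(z) \geq \ell(z) - M_0 r^2\}$, so by (a) $|A| \geq (1-\ep_n)|\cR_k(x)|$. The core of the argument is the following geometric claim: for $\ep_n$ small (depending only on $n$), every $y \in B_{r/2}(x)$ admits a representation $y = \lambda z_1 + (1-\lambda) z_2$ with $z_1, z_2 \in A$ and $\lambda \in [0, 1]$. Given the claim, the concave-envelope property of $\Gamma$ in $B_{3R} \supset \cR_k(x)$ yields
\[
 \Gamma(y) \geq \lambda u^+(z_1) + (1-\lambda) u^+(z_2) \geq \lambda u(z_1) + (1-\lambda) u(z_2);
\]
combined with $u(z_j) \geq \ell(z_j) - M_0 r^2$ for $j = 1, 2$ and the affineness of $\ell$, this gives $\Gamma(y) \geq \ell(y) - M_0 r^2$, i.e.\ exactly part (b).

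For (c), combining (b) with $\Gamma \leq \ell$ on $\R^n$ (which holds because $\D\Gamma(x)$ is a supergradient of the concave envelope at $x$) yields $|\Gamma - \ell| \leq M_0 r^2$ on $B_{r/2}(x)$. For any $y \in B_{r/4}(x)$ and any supergradient $p \in \partial\Gamma(y)$, concavity of $\Gamma$ gives $\Gamma(y + h) - \Gamma(y) \leq p \cdot h$ for all $|h| \leq r/4$ (which keeps $y + h \in B_{r/2}(x)$). Subtracting $\ell(y+h) - \ell(y) = \D\Gamma(x) \cdot h$ and choosing $h := -(r/4)(p - \D\Gamma(x))/|p - \D\Gamma(x)|$ yields $|p - \D\Gamma(x)| \leq 4 M_0 r$. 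Consequently $\D\Gamma(B_{r/4}(x)) \subseteq B_{4 M_0 r}(\D\Gamma(x))$, whose volume is comparable to $M_0^n |B_{r/4}(x)|$, giving (c).

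The principal obstacle is justifying the chord claim used for (b). The plan is a Fubini-type argument: for each $y \in B_{r/2}(x)$ and each direction $e \in S^{n-1}$, the line $\{y + t e : t \in \R\}$ meets $\cR_k(x)$ in two segments lying on opposite sides of the inner ball $B_{r/2}(x)$, and any pair of points $z_1, z_2$ drawn from those opposite segments automatically straddles $y$ on the line. Integrating segment lengths against $e \in S^{n-1}$ and using $|\cR_k(x) \setminus A| \leq \ep_n |\cR_k(x)|$ shows that for $\ep_n$ small a positive-measure set of directions $e$ has both opposite segments intersecting $A$; picking any such $e$ and corresponding $z_1, z_2 \in A$ produces the required chord. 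The delicate point is that these opposite segments can shrink as $y$ approaches $\partial B_{r/2}(x)$ in certain directions, so the Fubini bookkeeping must be performed carefully to arrive at a threshold $\ep_n$ depending only on $n$.
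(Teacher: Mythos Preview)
Your derivation of (a) is exactly the paper's, and your argument for (c) is the standard gradient bound for a concave function trapped in an $M_0 r^2$-slab around an affine function; this matches what the paper invokes (Corollary~8.5 of \cite{CS1}) up to harmless constants.

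For (b) your route differs from the one the paper cites. The argument in \cite{CS1} (Lemma~8.4) does not use chords through~$y$; instead it considers the point $x_1\in\partial B_{r/2}(x)$ where the convex, nonnegative function $\ell-\Gamma$ attains its maximum on $\overline{B_{r/2}(x)}$. The sublevel set $\{\ell-\Gamma\le(\ell-\Gamma)(x_1)\}$ is convex and contains $\overline{B_{r/2}(x)}$, so its supporting hyperplane at $x_1$ is the tangent plane to $\partial B_{r/2}(x)$; hence on the spherical cap $C=\cR_k(x)\cap\{(y-x)\cdot\tfrac{x_1-x}{|x_1-x|}>r/2\}$ one has $u\le\Gamma<\ell-(\ell-\Gamma)(x_1)$. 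If $(\ell-\Gamma)(x_1)>M_0 r^2$ this forces $|C|\le\ep_n|\cR_k(x)|$ by (a), contradicting the choice $\ep_n=\tfrac12|C|/|\cR_k(x)|$, a purely dimensional constant. This is shorter and avoids any Fubini bookkeeping.

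Your chord argument also works, and in fact the ``delicate point'' you flag is not delicate: a direct computation shows that for every $y\in B_{r/2}(x)$ and every direction $e$, each of the two segments in which the line $\{y+te\}$ meets the annulus has length in $[r/2,\,r\sqrt3/2]$. With that uniform lower bound, polar integration from $y$ gives $|D_+|+|D_-|\le C_n\,\ep_n$ for the sets of bad directions, so for $\ep_n$ small enough depending only on $n$ a good direction exists. Thus both proofs are valid; the cap argument is cleaner and pins down the explicit $\ep_n$ immediately, while your approach has the virtue of producing an explicit convex combination of two good points.
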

 \begin{proof}
  For a small $\ep_n>0,$ let  $\tilde M :=\tilde C/\ep_n.$ 
 We apply Lemma \ref{lem-abp-ring} with $M=\tilde M\frac{f(x)}{l(R)R^2}$ to have 
\begin{align*}
\left|  \left\{y\in B_r(x)\setminus B_{r/2}(x):  u(y)<u(x)+(y-x)\cdot\D\Gamma(x)-\frac{\tilde M f(x)}{l(R)R^2}r^2\right\}\right|\leq \ep_n\left| B_r(x)\setminus B_{r/2}(x)\right|
\end{align*}
for some $r=r_k,$ which proves (a). 
 With the help of  (a), we employ the same arguments as  the proofs of  Lemma 8.4 and Corollary 8.5 in \cite{CS1} to show  (b) and (c). 
 \end{proof}

Now we obtain a nonlocal version of the ABP estimate in the following
theorem  making use of  Lemma \ref{lem-grad-Gamma} together with a dyadic cube decomposition; we refer  to \cite[Theorem 8.7]{CS1} for the proof. 
 \begin{thm}[ABP type estimate]\label{thm-abp}
 Let $R\in(0,1),$ and let      $\rho_0\in\left(0, 1/(32\sqrt{n})\right]$  be a constant.   Let  $u$ be a viscosity subsolution of     
$$\cM^+_{\fL(\ld,\Ld,l)} u= -f\quad\mbox{ on $B_R$}$$   
such that $u\leq 0$  in $\R^n\setminus B_{R},$ 
and let  $\Gamma$ be the concave envelope of $u^+$ in $B_{3R}.$  Then there exists a finite,  disjoint   family of open cubes $Q_j$ with diameters $d_j\leq  \rho_0 2^{-\frac{1}{2(2-\sm)}}R$ 
 such that $\left\{\overline Q_j\right\}$ covers the contact set $\{u=\Gamma\},$ and satisfies the following:
 \begin{enumerate}[(a)]
\item 
$\{u=\Gamma\}\cap \overline Q_j \not\eq\emptyset$ for any $Q_j,$
\item 
$\displaystyle |\D \Gamma(\overline Q_j)|\leq  c_n\left(\frac{\tilde C}{l(R)R^{2}}\right)^n\left(\max_{\overline Q_j \cap\{u=\Gamma\}}f^n\right) |Q_j|,$
\item
$\displaystyle\left|\left\{y\in 32\sqrt{n}Q_j : u(y)\geq\Gamma(y)-c_n \frac{\tilde  C}{l(R)R^2} \left(\max_{\overline Q_j\cap \{u=\Gamma\}}f \right)d_j^2\right\}\right|\geq \mu |Q_j|$
\end{enumerate}
for  $\mu:=1-\ep_n\in(0,1), $ where $\tilde C>0 $ is the uniform constant as in Lemma \ref{lem-abp-ring}, and  $\ep_n\in(0,1)$  (appearing  in  Lemma \ref{lem-grad-Gamma}) and $c_n>0$ are  uniform constant depending only on $n.$ 
\end{thm}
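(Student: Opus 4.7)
The proof will follow the Calder\'on--Zygmund stopping-time dyadic cube decomposition used by Caffarelli and Silvestre in their Theorem 8.7, with Lemma~\ref{lem-grad-Gamma} playing the role of the pointwise estimates on the concave envelope. The key point is that Lemma~\ref{lem-grad-Gamma} already produces, at every contact point $x$, a dyadic scale $r=r_k(x)\le \rho_0 2^{-1/(2(2-\sm))}R$ at which properties (a)--(c) of that lemma hold; the remaining task is to glue these localized statements together by a cube selection that makes the scale $r$ and the cube size $d_j$ comparable.

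First, I would cover the contact set $\{u=\Gamma\}\subset B_R$ by a finite family of initial dyadic cubes whose diameter equals the target upper bound $\rho_0 2^{-1/(2(2-\sm))}R$; the hypothesis $\rho_0\le 1/(32\sqrt{n})$ guarantees that the $32\sqrt{n}$-dilate of each such cube lies inside $B_{3R}$, where $\Gamma$ is defined. I would then run a stopping-time splitting rule on each initial cube: at every stage, discard any cube $Q$ with $Q\cap\{u=\Gamma\}=\emptyset$; otherwise try to find a point $x\in\overline{Q}\cap\{u=\Gamma\}$ and a dyadic radius $r=r_k(x)$ from Lemma~\ref{lem-abp-ring} so that the two geometric inclusions $Q\subset B_{r/4}(x)$ and $B_r(x)\subset 32\sqrt{n}\,Q$ both hold. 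If such a pair exists, $Q$ is retained as one of the $Q_j$; otherwise $Q$ is split dyadically into $2^n$ sub-cubes and the procedure repeats. Since the admissible radii $r_k$ form a geometric sequence with ratio $1/2$, once $\diam(Q)$ enters a fixed multiplicative window relative to some $r_k$ anchored at a contact point inside $Q$, both inclusions are simultaneously realizable, so the decomposition terminates.

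With the family $\{Q_j\}$ and associated pairs $(x_j,r_j)$ in hand, the three conclusions follow from Lemma~\ref{lem-grad-Gamma} applied at $x_j$. Property~(a) is immediate from the construction. For~(b), the inclusion $Q_j\subset B_{r_j/4}(x_j)$ together with the concavity of $\Gamma$ yields $\D\Gamma(\overline{Q_j})\subset \D\Gamma(\overline{B_{r_j/4}(x_j)})$, so Lemma~\ref{lem-grad-Gamma}(c), the comparability $|B_{r_j/4}(x_j)|\le c_n|Q_j|$, and the bound $f(x_j)\le \max_{\overline{Q_j}\cap\{u=\Gamma\}}f$ combine to give the advertised estimate. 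For~(c), Lemma~\ref{lem-grad-Gamma}(a)--(b) show that on the set $B_{r_j}(x_j)\setminus B_{r_j/2}(x_j)\subset 32\sqrt{n}\,Q_j$, outside a subset of measure at most $\ep_n|B_{r_j}\setminus B_{r_j/2}|$, the function $u$ exceeds the affine function
$$\ell(y):=u(x_j)+(y-x_j)\cdot\D\Gamma(x_j)-\frac{\tilde M f(x_j)}{l(R)R^2}r_j^2;$$
since by Lemma~\ref{lem-grad-Gamma}(b) $\Gamma$ lies above $\ell$ on $B_{r_j/2}(x_j)$, and hence on the whole of $B_{r_j}(x_j)$ by concavity of $\Gamma$ on $B_{3R}$, the good set has measure at least $(1-\ep_n)|B_{r_j}\setminus B_{r_j/2}|\ge\mu|Q_j|$ with $\mu=1-\ep_n$, and the conversion $r_j^2\le c\,d_j^2$ yields the stated $d_j^2$ correction.

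The main technical obstacle is the geometric bookkeeping inside the stopping rule: verifying that the required window of comparability between $r_k(x)$ and $\diam(Q)$ can always be entered, thereby guaranteeing termination, while simultaneously preserving the upper bound $d_j\le \rho_0 2^{-1/(2(2-\sm))}R$ inherited from the constraint on $r_k$ in Lemma~\ref{lem-abp-ring}. The factor $32\sqrt{n}$ in~(c) is precisely the one dictated by the two simultaneous containments $Q_j\subset B_{r_j/4}(x_j)$ and $B_{r_j}(x_j)\subset 32\sqrt{n}\,Q_j$, and the hypothesis $\rho_0\le 1/(32\sqrt{n})$ is exactly what keeps the enlarged cubes $32\sqrt{n}\,Q_j$ inside $B_{3R}$, preserving the validity of all concave-envelope arguments.
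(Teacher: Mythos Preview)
Your proposal is essentially the same approach as the paper, which simply refers the proof to \cite[Theorem~8.7]{CS1}: run a Calder\'on--Zygmund dyadic stopping procedure on the contact set, using Lemma~\ref{lem-grad-Gamma} to supply the pointwise scale $r_k(x)$ at each contact point, and select the cube $Q_j$ once its diameter becomes comparable to $r_j$ in the sense $Q_j\subset B_{r_j/4}(x_j)$ and $B_{r_j}(x_j)\subset 32\sqrt{n}\,Q_j$.

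There is one small slip in your justification of part~(c). You write that Lemma~\ref{lem-grad-Gamma}(b) gives $\Gamma\ge\ell$ on $B_{r_j/2}(x_j)$ ``and hence on the whole of $B_{r_j}(x_j)$ by concavity of $\Gamma$''; but concavity does not extend a lower bound by an affine function to a larger ball, and in any case this inequality is not what you need. The correct (and simpler) argument is the opposite inequality: since $x_j\in\{u=\Gamma\}$ and $\D\Gamma(x_j)$ lies in the superdifferential of the concave function $\Gamma$, one has
\[
\Gamma(y)\le u(x_j)+(y-x_j)\cdot\D\Gamma(x_j)\qquad\text{for all }y\in B_{3R},
\]
so that $\ell(y)\ge\Gamma(y)-\dfrac{\tilde M f(x_j)}{l(R)R^2}r_j^2$ everywhere. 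Combined with $u(y)\ge\ell(y)$ on the good part of the annulus from Lemma~\ref{lem-grad-Gamma}(a), this yields $u(y)\ge\Gamma(y)-c_n\dfrac{\tilde C}{l(R)R^2}f(x_j)\,d_j^2$ on a set of measure at least $(1-\ep_n)|B_{r_j}\setminus B_{r_j/2}|\ge\mu|Q_j|$, as required. Lemma~\ref{lem-grad-Gamma}(b) is not needed here; it is the ingredient behind Lemma~\ref{lem-grad-Gamma}(c), which you already invoke for part~(b).
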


\subsection{Barrier function }

As in \cite{CS1} and \cite{KL2}, we  construct the  barrier function at each scale, where the monotone function $L$ associated with $l$ given in Property \ref{hypo-kernel-l} plays a role to obtain scale invariant estimates.  
\begin{lemma}\label{lem-barrier-pre}  
Let   $R\in(0,1/2).$   For  
 $\kappa_1\in(0,1),$ there exist uniform constants $p=p(n,\lambda,\Lambda)>n,$ and     $\ep_0 \in(0,1/8)$ such that  the function $\vp(x):=\min\left\{ |\kappa_0R|^{-p}, |x|^{-p}\right\}$ for $\kappa_0:=\ep_0\kappa_1>0$ satisfies 
 $$\cM^-_{\fL(\ld,\Ld,l)}\vp(x)\geq0, \quad\forall {x\in B_{R}\setminus B_{\kappa_1R}}.$$ 
\end{lemma}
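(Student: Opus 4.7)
The plan is a barrier construction in the spirit of \cite[Lemma 9.1]{CS1}, adapted to the regularly varying kernel setting via Lemma \ref{lem-kernel-l}, which is what makes the constants uniform in $\sm\in[\sm_0,2)$. By rotational invariance of the kernel bounds, I may take $x=re_1$ with $r\in[\kappa_1 R,R]$, so $\vp(x)=r^{-p}$. I would decompose
\[
\cM^-\vp(x) = (2-\sm)\int_{\R^n}\{\lambda\mu^+-\Lambda\mu^-\}\tfrac{l(|y|)}{|y|^n}\,dy = (2-\sm)[\cJ_{\mathrm{near}}+\cJ_{\mathrm{mid}}+\cJ_{\mathrm{far}}],
\]
where the three integrals are over $\{|y|<r/2\}$, $\{r/2\le|y|\le 2r\}$, and $\{|y|>2r\}$. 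Since $\kappa_0R=\ep_0\kappa_1R\le\ep_0r<r/8$, the truncation of $\vp$ is inactive on $\cJ_{\mathrm{near}}$ and $\cJ_{\mathrm{far}}$ (there $\vp(x\pm y)=|x\pm y|^{-p}$), and on $\cJ_{\mathrm{mid}}$ I split into Case A (both $x\pm y\notin B_{\kappa_0R}$) and Case B (exactly one inside $B_{\kappa_0R}$, the case both inside being excluded by $|x|>\kappa_0R$).

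The main positive contribution is two-fold. First, on $\cJ_{\mathrm{near}}$ the Taylor expansion of $|\cdot|^{-p}$ gives $\mu=p r^{-p-2}[(p+2)(e_1\cdot y)^2-|y|^2]+O(p^4r^{-p-4}|y|^4)$; the leading quadratic form is positive on $S^{n-1}$ outside an equatorial strip of angular size $O(p^{-1/2})$, and after angular integration (weighted by $\lambda$ on the cone and $\Lambda$ on the strip) and integration in $|y|$ using Lemma \ref{lem-kernel-l}(a) together with Property \ref{hypo-kernel-l}(a) to estimate $l$ at small scales uniformly in $\sm$---the $(2-\sm)^{-1}$ in Lemma \ref{lem-kernel-l}(a) cancelling against the $(2-\sm)$ prefactor---this produces a net positive contribution of order $\lambda p^2 r^{-p}l(r)$ for $p$ large. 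Second, Case B gives a decisive additional positive boost: if $x+y\in B_{\kappa_0R}$ then $\vp(x+y)=(\kappa_0R)^{-p}\ge \ep_0^{-p}r^{-p}$ (using $\kappa_1R\le r$) while $|x-y|\ge 15r/8$, so $\mu\ge \tfrac12\ep_0^{-p}r^{-p}>0$ once $\ep_0^{-p}\ge 4$. The $y$-region, via $z=x+y$, is exactly $B_{\kappa_0R}$ (measure $c_n\ep_0^n(\kappa_1R)^n$), on which the kernel is $\asymp l(r)/r^n$, yielding
\[
(2-\sm)\lambda\!\!\int_{\text{Case B}}\!\!\mu^+\tfrac{l(|y|)}{|y|^n}\,dy\;\ge\; c_0(n,\lambda)\,\kappa_1^n\,\ep_0^{\,n-p}\,(2-\sm)\,r^{-p}l(r),
\]
using $\kappa_1 R/r\ge\kappa_1$ together with $n-p<0$.

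The negative contributions are controlled as follows. The Taylor remainder on $\cJ_{\mathrm{near}}$ is bounded by $\Lambda p^4(2-\sm)r^{-p}l(r)$ via Lemma \ref{lem-kernel-l}(b); Case A of $\cJ_{\mathrm{mid}}$ contributes at most $C(n,\Lambda,a_0,\sm_0,p)\,r^{-p}l(r)$, the change of variables $z=x+y$ and integrability of $|z|^{-p}$ for $p>n$ keeping even the subregion $|z|\downarrow\kappa_0R$ under control (and that subregion in any case enters only via $\lambda\mu^+$, not the negative part); on $\cJ_{\mathrm{far}}$, $|\mu|\le 2r^{-p}$ together with Lemma \ref{lem-kernel-l}(c,d), Property \ref{hypo-kernel-l}(b) ($L\asymp l$), and the boundedness of $(2-\sm)/\sm$ for $\sm\in[\sm_0,2)$ gives $|\cJ_{\mathrm{far}}|\le C(n,\Lambda,a_0,a_\infty,\sm_0)\,r^{-p}l(r)$.

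Collecting everything and balancing, I would first pick $p=p(n,\lambda,\Lambda)>n$ large enough that the leading positive Taylor term in $\cJ_{\mathrm{near}}$ dominates both its own remainder and the tangential negative contribution, and then pick $\ep_0\in(0,1/8)$ small enough that the Case B contribution $c_0\kappa_1^n\ep_0^{n-p}(2-\sm)r^{-p}l(r)$ absorbs the remaining negative pieces from Case A and $\cJ_{\mathrm{far}}$; this is possible because $\ep_0^{n-p}\to\infty$ as $\ep_0\to 0^+$ (since $p>n$). The main obstacle is the careful $p$-bookkeeping on $\cJ_{\mathrm{near}}$: the naive Taylor remainder grows like $p^4$ while the leading quadratic term grows only like $p^2$, forcing either a restriction of the Taylor region to $|y|<r/(Cp)$ and separate treatment of the complement, or the inclusion of higher-order terms in the expansion, with Property \ref{hypo-kernel-l}(a) invoked repeatedly to control $l$ at these small scales uniformly in $\sm$. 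It is precisely the $(2-\sm)$-normalisation of the kernel class, together with Lemma \ref{lem-kernel-l}(a,c) and Property \ref{hypo-kernel-l}(b), that prevents the constants from blowing up as $\sm\to 2^-$.
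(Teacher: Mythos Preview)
Your approach is essentially the paper's, and it is correct; the decomposition into near/mid/far with the truncation boost (your Case~B) is exactly the mechanism behind the paper's terms $I_1,I_2,I_3$. However, the ``main obstacle'' you flag is an artifact of how you have organised the balancing, not a genuine difficulty.

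The key observation you are missing is in the powers of $(2-\sm)$. The leading quadratic piece of $\cJ_{\mathrm{near}}$, after integrating $\int_0^{r/2} s\,l(s)\,ds$ via Lemma~\ref{lem-kernel-l}(a), carries a factor $1/(2-\sm)$ which cancels the prefactor, giving a contribution $\asymp p\bar\delta\, r^{-p}l(r)$ with \emph{no} $(2-\sm)$. By contrast the Taylor remainder, integrated via Lemma~\ref{lem-kernel-l}(b), keeps the $(2-\sm)$ prefactor and is $\lesssim (2-\sm)p^4\,r^{-p}l(r)$; and your Case~A, $\cJ_{\mathrm{far}}$, and Case~B \emph{all} carry $(2-\sm)$ as well. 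So the right balancing is not ``Taylor beats remainder'' and then ``Case~B beats Case~A $+$ far'', but rather: choose $p$ only so that the angular condition $\bar\delta>0$ holds (i.e.\ $(p+2)\tfrac{\lambda}{2}\int_{\partial B_1}y_1^2\,d\sigma>\Lambda|\partial B_1|$, as in the paper's display \eqref{eq-barrier-choose-p}), and then choose $\ep_0$ so that the Case~B term $c_0\kappa_1^n\ep_0^{\,n-p}$ absorbs the \emph{sum} of the remainder, Case~A, and $\cJ_{\mathrm{far}}$---all three share the same $(2-\sm)$ factor as Case~B, so this works uniformly in $\sm\in[\sm_0,2)$. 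No restriction of the Taylor region or higher-order expansion is needed.

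The paper's organisation achieves the same thing slightly differently: it reserves $\tfrac{\lambda}{2}\mu^+$ over all of $\R^n$ as a separate global term $I_1$ (this plays the role of your Case~B, but exploiting the singularity of $|x-y|^{-p}$ on the annulus $\kappa_0R<|x-y|<R_0/4$ rather than the truncated value on $|x-y|<\kappa_0R$; both give $\asymp(\kappa_0R)^{n-p}$). It then splits the remaining $\tfrac{\lambda}{2}\mu^+-\Lambda\mu^-$ at a \emph{small} radius $\rho_1R_0$ (not at $r/2$ and $2r$), and treats the two $\sm$-ranges $[\sm_1,2)$ and $[\sm_0,\sm_1)$ separately: for $\sm$ close to $2$ the near/far pieces already satisfy $I_2+I_3\ge0$, while for $\sm$ bounded away from $2$ one has the fixed lower bound $(2-\sm)\ge(2-\sm_1)$ in $I_1$ to beat the (now bounded) negative part. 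Your single-shot balancing is arguably cleaner once the $(2-\sm)$ bookkeeping is set straight.

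One minor technical point: your description ``weighted by $\lambda$ on the cone and $\Lambda$ on the strip'' splits according to the sign of the quadratic form, but the true sign of $\mu$ involves the remainder too. The clean way around this (used in the paper) is the elementary inequality: if $\mu\ge A-B$ with $A,B\ge0$, then $\tfrac{\lambda}{2}\mu^+-\Lambda\mu^-\ge\tfrac{\lambda}{2}A-\Lambda B$, applied with $A=(p+2)\bar y_1^2$ and $B=|\bar y|^2+\tfrac12(p+2)(p+4)\bar y_1^2|\bar y|^2$.
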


\begin{proof}
Assume without loss of generality that  
    $x=R_0e_1$ for   $\kappa_1R\leq R_0<R.$  We  need to compute   
\begin{align*}
\cM^- \vp(x)&=  (2-\sm)\int_{\R^n}\left\{\lambda \mu^+(\vp,x,y)-\Lambda \mu^-(\vp,x,y)\right\}\frac{l(|y|)}{|y|^n}dy\\
&= (2-\sm) \int_{\R^n}\frac{\lambda \mu ^+}{2}\frac{l(|y|)}{|y|^n}dy+ (2-\sm)\int_{B_{\rho_1R_0}}\left(\frac{\lambda}{2} \mu ^+-\Lambda \mu ^-\right)\frac{l(|y|)}{|y|^n}dy 
\\
&+ (2-\sm) \int_{B_{\rho_1R_0}^c}\left(\frac{\lambda}{2} \mu ^+-\Lambda \mu ^-\right)\frac{l(|y|)}{|y|^n}dy\\
&\geq(2-\sm) \int_{\R^n}\frac{\lambda \mu ^+}{2}\frac{l(|y|)}{|y|^n}dy+ (2-\sm)\int_{B_{\rho_1R_0}}\left(\frac{\lambda}{2} \mu ^+-\Lambda \mu ^-\right)\frac{l(|y|)}{|y|^n}dy 
\\&- 2(2-\sm) \Lambda R_0^{-p}\int_{B_{\rho_1R_0}^c}\frac{l(|y|)}{|y|^n}dy=:I_1+I_2+I_3,
\end{align*}
where $\rho_1\leq \min\{\rho, 1/2\}$ will be chosen sufficiently small   later .

 For {\color{black}$|y|<\frac{R_0}{2},$}  we have 
\begin{equation}\label{eq-2nd-dif-lower}
\begin{split}
|x+y|^{-p}+|x-y|^{-p}-2|x|^{-p}&= R_0^{-p}\left\{\left|\frac{x}{R_0}+\frac{y}{R_0}\right|^{-p}+\left|\frac{x}{R_0}-\frac{y}{R_0}\right|^{-p}-2\right\}\\
&\geq R_0^{-p}p\left\{-|\overline{y}|^2+(p+2)\overline{y}_1^{2}-\frac{1}{2}(p+2)(p+4)\overline{y}_1^2|\overline{y}|^2\right\}
\end{split}
\end{equation}
for $\overline y:=y/R_0$; see \cite[Lemma 9.1]{CS1}.    
We choose {\color{black}$\N\owns p>n$} large enough so that 
\begin{equation}\label{eq-barrier-choose-p}
(p+2)\frac{\lambda}{2}\int_{\pa B_1}y_1^2d\sm(y)-\Lambda|\pa B_1|=:\overline\delta>0.
\end{equation}
We use \eqref{eq-2nd-dif-lower}, \eqref{eq-barrier-choose-p}, Lemma \ref{lem-kernel-l}  and  Property \ref{hypo-kernel-l}   to obtain 
\begin{align*}
I_2&= (2-\sm)\int_{B_{\rho_1R_0}}\left(\frac{\lambda}{2}\mu^+-\Lambda \mu ^-\right)\frac{l(|y|)}{|y|^n}dy\\
&\geq(2-\sm) pR_0^{-p}  \int_{B_{\rho_1R_0}}\left\{ 
\frac{\lambda}{2}(p+2) \frac{y_1^2}{R_0^2}- \Lambda\left(\frac{|y|^2}{R_0^2}+\frac{(p+2)(p+4)}{2}  \frac{y_1^2|y|^2 }{R_0^4}\right)\right\} \frac{l(|y|)}{|y|^n}dy\\
&\geq (2-\sm)pR_0^{-p}  \left\{
\frac{\overline\delta}{R_0^2}\int_0^{\rho_1R_0}sl(s)ds  -\frac{\Ld(p+2)(p+4)\omega_n}{2R_0^4}
\int_0^{\rho_1R_0}  s^3l(s)ds\right\}\\
 &   \geq   (2-\sm)pR_0^{-p} \left\{
 \frac{ \overline\delta }{2a_0(2-\sm)} \rho_1^2l(\rho_1R_0)
  - \frac{\Ld(p+2)(p+4)\omega_n}{2}a_0 \rho_1^4
l(\rho_1R_0) \right\}\\
&=   {pR_0^{-p}} \left\{ \frac{\overline\delta  }{2a_0}\rho_1^2 - (2-\sm)\frac{\Ld(p+2)(p+4)\omega_n }{2}a_0 \rho_1^4\right\} l(\rho_1R_0) \\
&\geq \frac{pR_0^{-p}}{2} \left\{
\frac{ \overline\delta}{ 2a_0} \rho_1^2
  - (2-\sm)\frac{\Ld(p+2)(p+4)\omega_n}{2}a_0  
\right\} L(\rho_1R_0). 
\end{align*}
We  select a uniform constant  $\rho_1=\rho_1(a_0, a_\infty,\sm_0) \leq\min(\rho,1/2)$ small so that 
\begin{equation}\label{eq-choice-rho1}
2a_\infty \leq \frac{1}{2a_0^2}\left(\rho_1^{-{\sm_0/2}}-1\right)\leq L(\rho_1),
\end{equation}
 and  then we use    Lemma \ref{lem-kernel-l} and  Properties \ref{hypo-kernel-l}   and \ref{hypo-kernel-l-infty}  again to have
\begin{align*}
-I_3&=2(2-\sm) \Lambda R_0^{-p}\int_{B_{\rho_1R_0}^c}\frac{l(|y|)}{|y|^n}dy\leq 2(2-\sm) \Ld R_0^{-p}  \omega_n\left\{ \frac{2a_\infty }{\sm}+ \frac{1}{\sm}L(\rho_1 R_0)\right\}\\
&\leq \frac{2-\sm}{\sm}2 \Ld \omega_nR_0^{-p} \left\{    \frac{1}{2a_0^2}\left(\rho_1^{-{\sm_0/2}}-1\right)+ L(\rho_1 R_0)\right\} 
\leq \frac{2-\sm}{\sm_0}4 \Ld \omega_nR_0^{-p} L(\rho_1 R_0),
\end{align*}
where we note that $L$ is monotone. 
 Thus  we deduce    that
\begin{align*}
 I_2+I_3 &\geq \frac{pR_0^{-p}}{2} \left\{
 \frac{\overline\delta}{2 a_0}\rho_1^2 -  (2-\sm) \frac{\Ld(p+2)(p+4)\omega_na_0}{2} 
\right\} L(\rho_1R_0)  -\frac{2-\sm}{\sm_0}4\Ld \omega_nR_0^{-p} L(\rho_1 R_0)\\
&=\frac{R_0^{-p}}{2} L(\rho_1 R_0)   \left\{
 \frac{p\overline\delta}{2 a_0}\rho_1^2
  -  (2-\sm) \frac{\Ld p(p+2)(p+4)\omega_na_0}{2} - (2-\sm) \frac{8\Ld \omega_n}{\sm_0}\right\} \\
 & \geq0 
\end{align*}
 for any $\sm\in[\sm_1,2), $
where $\sm_1\in[\sm_0,2)$ depends only on $n,\ld,\Ld, a_0,a_\infty,\rho,$ and $ \sm_0.$ 
Thus  the lemma holds true for $\sm\in[\sm_1,2).$

For $\sm\in [\sm_0,\sm_1)$, we  will make  $I_1$ sufficiently  large   by selecting $\kappa_0>0$ small.  For $x=R_0e_1$ with $\kappa_1R\leq R_0<R,$ we have that for  $\kappa_0:=\ep_0\kappa_1\in(0,\kappa_1/8)$
\begin{align*}
  I_1&\geq  (2-\sm_1) \int_{\R^n}\frac{\lambda\mu^+}{2}\frac{l(|y|)}{|y|^n}dy\\
&\geq    (2-\sm_1)  \frac{ \ld}{2}\int_{B_{R_0/4}(x)}\left\{|x-y|^{-p}-2R_0^{-p}\right\}\frac{l(|y|)}{|y|^n}dy\\
&\geq    (2-\sm_1) \frac{ \ld}{4}\int_{B_{R_0/4}(x)\setminus B_{\kappa_0R}(x)} |x-y|^{-p} \frac{l(|y|)}{|y|^n}dy=    (2-\sm_1) \frac{ \ld}{4}\int_{B_{R_0/4}(0)\setminus B_{\kappa_0R}(0)} |z|^{-p} \frac{l(|x+z|)}{|x+z|^n}dz\\
&\geq      (2-\sm_1)\frac{ \ld}{2^{n+2}R_0^{n}}\left(\min_{s\in[R_0/2,3R_0/2]}l(s)\right)\omega_n\int^{{R_0/4}}_{\kappa_0R} s^{-p+n-1} ds\\
&\geq       (2-\sm_1) \frac{ \ld}{2^{n+2}R_0^{n}}\frac{1}{p-n}\left\{(\kappa_0R)^{-p+n}-(R_0/4)^{-p+n}\right\}
\min_{s\in[R_0/2,3R_0/2]}l(s)\\
&\geq    (2-\sm_1) \frac{ \ld}{2^{n+2}R_0^{n}}\frac{R_0^{-p+n}}{p-n}\left\{\left(\frac{\kappa_1}{\kappa_0}\right)^{p-n}-4^{p-n}\right\}
\min_{s\in[R_0/2,3R_0/2]}l(s)\\
&\geq  (2-\sm_1)  \frac{ \ld}{2^{n+3}R_0^{n}}\frac{R_0^{-p+n}}{p-n}\left(\frac{\kappa_1}{\kappa_0}\right)^{p-n}
\min_{s\in[R_0/2,3R_0/2]}l(s)\\
& \geq  (2-\sm_1)  \frac{ \ld}{2^{n+3}R_0^{n}}\frac{R_0^{-p+n}}{p-n}\ep_0^{-p+n}
\frac{1}{a_0}\left(\frac{2\rho_1}{3}\right)^{\sm+\delta}l(\rho_1 R_0)\\
& \geq   (2-\sm_1)  \frac{ \ld}{2^{n+3}}\frac{R_0^{-p}}{p-n} \frac{1}{\ep_0}
\frac{1}{a_0}\left(\frac{2\rho_1}{3}\right)^{3 } {\color{black} }\frac{1}{2} L(\rho_1 R_0).
\end{align*}
From the   argument above, we notice that for $\sm\in[\sm_0,2),$ 
$$ I_2+I_3 \geq -C R_0^{-p}L(\rho_1R_0),$$
where  a uniform constant $C>0$ depends only on $n,\ld,\Ld, a_0,a_\infty,\rho,$ and $ \sm_0.$  Therefore,  we choose a uniform constant $\ep_0=\frac{\kappa_0}{\kappa_1}\in(0,1/8)$ sufficiently  small   to conclude that $$\cM^-\vp(x)\geq I_1+I_2+I_3\geq0$$
for $x\in B_{R}\setminus B_{\kappa_1 R}$ in the case when $\sm\in[\sm_0,\sm_1).$  This finishes the proof.
\end{proof}

\begin{lemma}\label{lem-barrier}
Let $R\in(0,1/2)$ and $0<\delta_1<\delta_2<1.$ Assume $\delta_1\leq \rho_1, $ where  $\rho_1=\rho_1(a_0, a_\infty, \rho,\sm_0) \leq\min(\rho,1/2)$ is the constant satisfying \eqref{eq-choice-rho1}.   There exists a continuous function $\Phi$ in $\R^n $     such  that
\begin{enumerate}[(a)]
\item $\Phi$ is nonnegative and uniformly bounded in $\R^n$,
\item $\Phi= 0$ outside $B_R,$
\item $\Phi\geq 2$  in $B_{\delta_2R},$
\item {  $L(\delta_1 R)^{-1}\cM^-_{\fL(\ld,\Ld,l)}\Phi\geq -\psi\ $}  in $\R^n $
for    some nonnegative, uniformly bounded function $\psi$ such that   $\supp(\psi)\subset B_{\delta_1 R}.$ 
\end{enumerate}
\end{lemma}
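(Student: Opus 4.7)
The plan is to build $\Phi$ explicitly from the auxiliary power barrier $\vp(x)=\min\{(\ep_0\delta_1 R)^{-p},|x|^{-p}\}$ provided by Lemma \ref{lem-barrier-pre} applied with $\kappa_1=\delta_1$, so that $\cM^-\vp\ge 0$ classically on $B_R\setminus B_{\delta_1R}$. The natural candidate is then
$$ \Phi(x):= c\,[\vp(x)-R^{-p}]^+, \qquad c:=\frac{2}{(\delta_2 R)^{-p}-R^{-p}}=\frac{2R^{p}\delta_2^{p}}{1-\delta_2^{p}}. $$
Properties (a) and (b) are immediate: $\Phi$ is continuous, nonnegative and supported in $B_R$, with $\|\Phi\|_\infty\le c(\ep_0\delta_1 R)^{-p}$, a bound independent of $R$. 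For (c), any $x\in B_{\delta_2 R}$ satisfies $\vp(x)\ge(\delta_2 R)^{-p}$ (since $\ep_0\delta_1<\delta_2$), so $\Phi(x)\ge 2$ by the choice of $c$.

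For (d) I would split $\R^n$ into three regions. On $\R^n\setminus B_R$ the function $\Phi$ vanishes at $x$ while staying nonnegative, so $\mu(\Phi,x,y)\ge 0$ pointwise and $\cM^-\Phi(x)\ge 0$ trivially. On $B_R\setminus B_{\delta_1R}$, decompose
$$ \Phi= c(\vp-R^{-p})+c(R^{-p}-\vp)^+ =: c\,\tilde\vp+c\,g. $$
The second summand $g$ is nonnegative and vanishes identically on $B_R$, so at $x\in B_R$ one has $\mu(g,x,y)=g(x+y)+g(x-y)\ge 0$ and hence $\cM^- g(x)\ge 0$. Combined with $\cM^-\tilde\vp(x)=\cM^-\vp(x)\ge 0$ from Lemma \ref{lem-barrier-pre} and the superadditivity of $\cM^-$ (being an infimum of linear operators), this yields $\cM^-\Phi(x)\ge 0$.

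The main obstacle is the region $B_{\delta_1 R}$, where the integrals are singular and the assumption $\delta_1\le \rho_1$ together with Lemma \ref{lem-kernel-l} and Property \ref{hypo-kernel-l} must be used carefully. Here $\Phi$ is $C^{1,1}$ on a neighborhood of $x$ away from the spherical kink $|x|=\ep_0\delta_1 R$, with the $R$-independent bounds
$$ \|\Phi\|_\infty\le C(\delta_1,\delta_2), \qquad (\delta_1 R)^2\|D^2\Phi\|_{L^\infty}\le C(\delta_1,\delta_2). $$
Splitting the integral at $|y|=\delta_1 R/2$, I would bound the small-$y$ piece by $\|D^2\Phi\|_\infty|y|^2$ and invoke Lemma \ref{lem-kernel-l}(a) to get
$$ \int_{|y|<\delta_1 R/2}|\mu(\Phi,x,y)|\tfrac{l(|y|)}{|y|^n}dy \le C(\delta_1,\delta_2)\,\tfrac{l(\delta_1 R)}{2-\sm}. $$
For the tail $|y|\ge \delta_1 R/2$ use $|\mu|\le 4\|\Phi\|_\infty$ together with Lemma \ref{lem-kernel-l}(d) beyond $1$ and the definition of $L$ on $[\delta_1 R/2,1]$. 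After multiplying by the outer factor $(2-\sm)\Ld$ that absorbs the $(2-\sm)^{-1}$, and applying Property \ref{hypo-kernel-l}(b) to compare $l(\delta_1 R)\asymp L(\delta_1 R)$ (valid since $\delta_1 R\le \rho_1<\rho$), I obtain $|\cM^-\Phi(x)|\le C(\delta_1,\delta_2)\,L(\delta_1 R)$ uniformly in $\sm\in[\sm_0,2)$ and in $R$. Choosing $\psi:=C(\delta_1,\delta_2)\chi_{B_{\delta_1 R}}$ then gives (d). The remaining subtlety, namely the lack of classical $C^{1,1}$ regularity on the surface $|x|=\ep_0\delta_1 R$, I would handle by replacing $\Phi$ with a standard mollification $\Phi*\eta_\epsilon$, observing that the estimates above are stable in $\epsilon$, and passing to the limit via the stability lemma (Lemma \ref{lem-stab}); alternatively one can use the semi-concavity of $\vp$ and interpret (d) in the viscosity sense directly.
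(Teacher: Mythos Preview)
Your overall architecture matches the paper's, but there is a genuine gap at the inner kink that neither of your proposed fixes actually closes. With the flat cap, $\Phi(x)=c[\vp(x)-R^{-p}]^+$ is only Lipschitz across the sphere $|x|=\kappa_0R$: moving radially outward from a point $x_0$ on that sphere one has $\mu(\Phi,x_0,s\hat x_0)=c\big((\kappa_0R+s)^{-p}-(\kappa_0R)^{-p}\big)\sim -cp(\kappa_0R)^{-p-1}s$, so $\mu^-(\Phi,x_0,y)\sim C|y|/R$ rather than $C|y|^2/R^2$. Hence the local contribution is $(2-\sm)\int_{|y|<\delta_1R/2}\mu^-\,l(|y|)|y|^{-n}\,dy\asymp (2-\sm)\,R^{-1}\!\int_0^{\delta_1R/2}l(s)\,ds$, and since $l(s)\asymp s^{-\sm}$ this diverges for every $\sm\ge 1$. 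Thus the classical bound $|\cM^-\Phi(x)|\le C(\delta_1,\delta_2)L(\delta_1R)$ fails near the kink.

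Mollification does not rescue this: a convolution at scale $\epsilon$ smooths the corner but produces $\|D^2\Phi_\epsilon\|_\infty\asymp (\kappa_0R)^{-p-1}/\epsilon$, and splitting the integral at $|y|=\epsilon$ one still finds a total of order $(2-\sm)\epsilon^{1-\sm}$, which blows up as $\epsilon\to0$ when $\sm\ge1$; the estimates are \emph{not} stable in $\epsilon$. The viscosity interpretation also fails. Property~(d) is a \emph{subsolution} inequality for $\cM^-$, so one must test with $C^2$ functions touching $\Phi$ from \emph{above}, and such test functions certainly exist at a concave corner (e.g.\ $\varphi(x)=\Phi_{\max}+\eta|x-x_0|^2$). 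Shrinking the touching neighborhood $N=B_r(x_0)$ sends $\cM^- v(x_0)\to-\infty$ for $\sm>1$ by the same computation, so no uniformly bounded $\psi$ can work. Moreover, in Lemma~\ref{lem-decay-1st-step} one needs $\cM^-\Phi$ as a genuine pointwise function to write $\cM^+(\Phi-u)\ge \cM^-\Phi-\cM^-u$, which requires $\Phi\in C^{1,1}$.

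The paper's remedy is exactly to remove this kink: instead of a constant cap, one glues a paraboloid $P(x)=-a|x|^2+b$ (with $a=\tfrac{p}{2}(\kappa_0R)^{-2}$ chosen so that values and gradients match at $|x|=\kappa_0R$) onto $(\kappa_0R)^p(|x|^{-p}-R^{-p})$. The resulting $\Phi$ is $C^{1,1}$ on $B_R$ with $D^2\Phi\ge -c_1R^{-2}\mathbf I$ a.e.\ on $B_{2\delta_1R}$, and then your splitting argument at scale $\delta_1R$ together with Lemma~\ref{lem-kernel-l}(a),(d) and Property~\ref{hypo-kernel-l}(b) goes through verbatim to give $\cM^-\Phi\ge -CL(\delta_1R)$ on $B_{\delta_1R}$. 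Your decomposition $\Phi=c\tilde\vp+cg$ on the annulus $B_R\setminus B_{\delta_1R}$ is correct and is essentially the argument the paper uses implicitly (after the quadratic cap, $\Phi\ge c_0(\kappa_0R)^p(\vp-R^{-p})$ globally with equality on the annulus).
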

\begin{proof}
Let $\kappa_1:=\delta_1/2.$ According to Lemma \ref{lem-barrier-pre}, 
    the  function $\vp(x):=\min\left\{ |\kappa_0R|^{-p}, |x|^{-p}\right\}$ satisfies 
 $$\cM^-\vp(x)\geq0 \quad\forall {x\in B_{R}\setminus B_{\kappa_1R}}$$
 for some $0<\kappa_0<\kappa_1/8$ and $p>n.$ Now we  define $\Phi:\R^n\to [0,+\infty)$ by  
\begin{equation*}
\Phi(x):=c_0\left\{
\begin{split}
&P(x)\quad&\forall x\in B_{\kappa_0R}\\
&(\kappa_0R)^p\left\{ \min\left(|\kappa_0R|^{-p}, |x|^{-p}\right)-R^{-p}\right\}\quad&\forall x\in B_{R}\setminus B_{\kappa_0R}\\
& 0& \mbox{outside $B_R$},
\end{split}
\right.
\end{equation*}
where  $P(x):=-a|x|^2+b$  with $a=\frac{1}{2}p (\kappa_0R)^{-2}$ and $b:=1-\kappa_0^p+\frac{1}{2}p.$ 
Thus $\Phi$ is a  $C^{1,1}$-function on $B_R.$ By setting $c_0:=\frac{2}{\kappa_0^p(\delta_2^{-p}-1)},$ the property (b) follows.  
  Note that Lemma \ref{lem-barrier-pre} implies that 
 $$\cM^-\Phi(x)\geq0 \quad\forall {x\in B_{R}\setminus B_{\delta_1R/2}}.$$
 
   It remains to show that   
$$  L(\delta_1R)^{-1}\cM^-\Phi\geq -C\quad\mbox{in $B_{\delta_1R}$}.$$ We use  Properties \ref{hypo-kernel-l} and \ref{hypo-kernel-l-infty} and Lemma \ref{lem-kernel-l} to deduce  
that  for $x\in B_{\delta_1R},$ 
\begin{align*}
\cM^-\Phi(x)&\geq- (2-\sm)\Ld\int_{\R^n}\mu^-(\Phi,x,y)\frac{l(|y|)}{|y|^n}dy\\
&\geq-  (2-\sm)\Ld\int_{B_{\delta_1R}}\mu^-(\Phi,x,y)\frac{l(|y|)}{|y|^n}dy- (2-\sm)\Ld 2c_0 b\int_{\R^n\setminus B_{\delta_1R}} \frac{l(|y|)}{|y|^n}dy\\
&\geq-  (2-\sm)\Ld\int_{B_{\delta_1R}}\mu^-(\Phi,x,y)\frac{l(|y|)}{|y|^n}dy-\Ld 4c_0 b\omega_n\frac{1}{\sm}\left\{ L(\delta_1R)+2a_\infty\right\}\\
&\geq - (2-\sm)\Ld c_1R^{-2}\omega_n {2a_0}\frac{\delta_1^2R^2}{2-\sm}l(\delta_1R)-\frac{8bc_0\Ld \omega_n }{\sm_0}L(\delta_1R)\\
&\geq -\Ld \omega_n\left\{   {4a_0c_1\delta_1^2} +\frac{8bc_0}{\sm_0} \right\}L(\delta_1R)
\end{align*}
since $D^2\Phi\geq  -c_1R^{-2}{\bf{I}}\,\,$ a.e. in $B_{2\delta_1R}$ for some $c_1=c_1(\delta_1,\delta_2)>0,$ 
where we recall that $0<\delta_1\leq\rho_1,$ and  $\rho_1=\rho_1(a_0, a_\infty, \rho,\sm_0) \leq\min(\rho,1/2)$ satisfies  \eqref{eq-choice-rho1}. 
\end{proof}

\subsection{Power decay estimate of super-level sets }

We use   the ABP type estimate in Theorem \ref{thm-abp} and the  barrier functions constructed in Lemma \ref{lem-barrier} to obtain   the measure  estimates of  super-level sets of the viscosity supersolutions to fully nonlinear elliptic integro-differential operators with respect to $\fL(\ld,\Ld,l)$.

\begin{lemma}\label{lem-decay-1st-step}
%
Let $0<R<1/2$ and let  $Q_r=Q_r(0)$ denote  a dyadic cube of side $r$ centered at $0$ for $r>0.$  There exist uniform constants $\vep_0,\, \rho_0, \,\mu_0\in(0,1)$ and $M_0>1$, depending only on $n,\ld,\Ld, a_0,a_\infty,\rho,$ and $ \sm_0,$  such that if 
\begin{enumerate}[(a)]
\item $u\geq0$ in $\R^n,$
\item $\displaystyle\inf_{Q_{ \frac{3R}{2\sqrt{n}}}}u\leq 1,$
\item $\cM^-_{\fL(\ld,\Ld,l)}u\leq \vep_0 L(\rho_0 R)$ on $Q_{2R}$ in the viscosity sense,
\end{enumerate}
then 
$$ {\left|\left\{u\leq M_0\right\}\cap Q_{ \frac{R}{2\sqrt{n}}}\right|}> \mu_0\left| Q_{ \frac{R}{2\sqrt{n}}}\right|.$$
\end{lemma}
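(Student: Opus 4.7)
The plan is to adapt the Caffarelli--Silvestre argument of \cite[Lemma 10.1]{CS1} to our setting. The two workhorses are the nonlocal Aleksandrov--Bakelman--Pucci estimate of Theorem \ref{thm-abp} and the barrier of Lemma \ref{lem-barrier}, with the monotone envelope $L$ of $l$ supplying the correct normalization at every scale; the uniform comparisons collected in Lemma \ref{lem-kernel-l} are what keep every constant independent of $\sm\in[\sm_0,2)$.

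First, I would fix $\delta_2 := 3/4$ so that $Q_{3R/(2\sqrt{n})}\subset B_{\delta_2 R}$, pick a small $\delta_1\le \rho_1$ (with $\rho_1$ from Lemma \ref{lem-barrier}) to be chosen later, and set $\rho_0:=\delta_1$. Let $\Phi$ be the barrier of Lemma \ref{lem-barrier} for these $\delta_1,\delta_2$ and define $v:=\Phi-u$. Then $v\le 0$ outside $B_R$ (since $\Phi=0$ there and $u\ge 0$), while $\sup_{B_R}v\ge 1$ (because $\Phi\ge 2$ on $B_{\delta_2 R}$ and $\inf_{Q_{3R/(2\sqrt{n})}}u\le 1$). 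Combining $\cM^-_{\fL(\ld,\Ld,l)}u\le \vep_0 L(\rho_0 R)$ with Lemma \ref{lem-barrier}(d) through the comparison Lemma \ref{lem-comparison}, applied to the elliptic operator $\cI:=\cM^-_{\fL(\ld,\Ld,l)}$, gives in the viscosity sense on $B_R$
\[
\cM^+_{\fL(\ld,\Ld,l)} v \;\ge\; -\bigl(\vep_0 L(\rho_0 R)+C\,L(\delta_1 R)\,\psi\bigr)\;=:\;-f,\qquad \supp\psi\subset B_{\delta_1 R}.
\]

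Next, apply Theorem \ref{thm-abp} to $v$ and let $\Gamma$ be the concave envelope of $v^+$ on $B_{3R}$. Since $\Gamma$ is concave, vanishes outside $B_R$, and attains a value $\ge 1$ inside, a standard Jacobian argument yields $|\D\Gamma(\{v=\Gamma\})|\ge c/R^n$, whereas Theorem \ref{thm-abp}(b) gives
\[
\frac{c}{R^n}\;\le\;\sum_j |\D\Gamma(\overline Q_j)|\;\le\;\frac{c_n}{(l(R)R^2)^n}\sum_j\Bigl(\max_{\overline Q_j\cap\{v=\Gamma\}} f\Bigr)^{\!n}|Q_j|.
\]
Split $\{Q_j\}=\mathcal I\cup\mathcal O$ according as $Q_j$ does or does not meet $B_{\delta_1 R}$. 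On the $\mathcal O$-cubes $f\equiv\vep_0 L(\rho_0 R)$, so using Lemma \ref{lem-kernel-l} to compare $L(\rho_0 R)$ with $l(R)$ uniformly their total contribution is at most $\tfrac12\cdot c/R^n$ once $\vep_0$ is chosen small enough. Hence the $\mathcal I$-cubes must carry at least half of $c/R^n$.

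Each $Q_j\in\mathcal I$ meets $B_{\delta_1 R}$ and has $d_j\le \rho_0\,2^{-1/(2(2-\sm))}R$, so for $\delta_1=\rho_0$ small enough we have $32\sqrt{n}\,Q_j\subset Q_{R/(2\sqrt{n})}$. By Theorem \ref{thm-abp}(c) there is a set $E_j\subset 32\sqrt{n}\,Q_j$ with $|E_j|\ge \mu|Q_j|$ on which
\[
v\;\ge\;\Gamma - c_n\,\frac{d_j^2}{l(R)R^2}\max_{\overline Q_j\cap\{v=\Gamma\}} f \;\ge\; -C,
\]
the last bound using $\max f\le C\,L(\delta_1 R)$, $d_j\le R$, and the uniform comparisons in Lemma \ref{lem-kernel-l}. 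Thus $u=\Phi-v\le \|\Phi\|_\infty + C=:M_0$ on each $E_j$. A Besicovitch-type selection from the family $\{32\sqrt{n}\,Q_j\}_{Q_j\in\mathcal I}$, together with the lower bound $\sum_{Q_j\in\mathcal I}(\max f)^n|Q_j|\gtrsim (l(R)R)^n$ from the previous step, delivers a total good-set measure $\ge \mu_0|Q_{R/(2\sqrt{n})}|$ for some universal $\mu_0>0$, completing the proof. I expect the main obstacle to be the scale bookkeeping: the quantities $L(\rho_0 R)$, $L(\delta_1 R)$, and $(l(R)R^2)^{-1}$ arise from three different places (hypothesis on $u$, barrier, and ABP constant) and must be played against each other uniformly in $\sm\in[\sm_0,2)$, which is precisely what the regularly-varying framework of Properties \ref{hypo-kernel-l}--\ref{hypo-kernel-l-infty}, via Lemma \ref{lem-kernel-l}, is designed to accommodate.
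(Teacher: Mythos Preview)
Your proposal is correct and follows essentially the same approach as the paper: the paper likewise sets $\delta_2=3/4$, $\delta_1=\rho_0$, forms $v=\Phi-u$, derives $\cM^+_{\fL(\ld,\Ld,l)}v\ge -(\psi+\vep_0)L(\rho_0 R)$, feeds this into the ABP estimate of Theorem~\ref{thm-abp}, absorbs the $\vep_0$-contribution by choosing $\vep_0$ small, uses Property~\ref{hypo-kernel-l} to compare $L(\rho_0 R)$ with $l(R)$, and then extracts the measure bound from part~(c) of Theorem~\ref{thm-abp} via a finite-overlap subcover of $\{32\sqrt n\,Q_j\}$. The only cosmetic difference is that the paper invokes the pointwise inequality $\cM^+ v\ge \cM^-\Phi-\cM^- u$ directly (legitimate since $\Phi$ is $C^{1,1}$) rather than routing through Lemma~\ref{lem-comparison}.
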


\begin{proof} Let $\rho_0\in(0,1)$ be a constant to be chosen later depending only on $n$ and $\rho_1>0,$ where the constant  $\rho_1$ satisfies \eqref{eq-choice-rho1}.  Let $\Phi$  be the barrier function      in Lemma \ref{lem-barrier}  
  with  $\delta_1:=\rho_0 \left(\leq \min\left\{ {1}/{(32\sqrt{n})}, \rho_1\right\}\right),$ and $ \delta_2:= \frac{3}{4}.$  
Then 
$$v:=\Phi-u$$  satisfies that $v\leq 0$ outside $B_R,$ $\displaystyle\max_{B_R} v\geq 1$ and 
$$\cM^+_{\fL(\ld,\Ld,l)}v\geq \cM^-_{\fL(\ld,\Ld,l)}\Phi-\cM^-_{\fL(\ld,\Ld,l)}u\geq - (\psi+\vep_0) L(\rho_0R) 
 \quad\mbox{in $B_{R}$}$$
 in the viscosity sense.  
 For  the concave envelope $\Gamma$ of $v^+$ in $B_{3R},$   Theorem \ref{thm-abp}  with the help of  Property \ref{hypo-kernel-l} yields that  
\begin{align*}
\frac{1}{R}\leq \frac{1}{R}\max_{B_R}v&\leq c_n|\D \Gamma(B_R)|^{1/n}\leq c_n\left(\sum_j|\D\Gamma(\overline Q_j)|\right)^{1/n} \\
&\leq  c_n \frac{\tilde C}{l(R)R^{2}} {L(\rho_0R)}  \left(\sum_j\max_{\overline Q_j}\left(\psi+\vep_0\right)^n |Q_j|\right)^{1/n}\\
&\leq  c_n\tilde C \frac{L(\rho_0R)}{l(\rho_0R) R^2}  {a_0}{\rho_0^{-\sm-\delta}} \left(\sum_j\max_{\overline Q_j}\left(\psi^n+\vep_0^n\right) |Q_j|\right)^{1/n}\\
&\leq  c_n\tilde C \frac{a_0}  {\  \rho_0^{2 }} \frac{1}{R^{2}} \left(\sum_j\max_{\overline Q_j}\left(\psi^n+\vep_0^n\right) |Q_j|\right)^{1/n},
\end{align*}
so we have 
  \begin{align*}
\frac{1}{R}\leq  \displaystyle\frac{C}{R^{2}} \left(\sum_j\max_{\overline Q_j}\left(\psi^n+\vep_0^n\right) |Q_j|\right)^{1/n}
\end{align*}
for a uniform constant $C>0$ depending only on $n,\ld, a_0,\rho,$ and $ \sm_0,$
Recalling that  the nonnegative function $\psi$ is uniformly   bounded with  $\supp\,\psi\subset B_{\rho_0R}$ in Lemma \ref{lem-barrier}, and  $\sum_j|{Q_j}| \leq c_n|B_R|$,  it follows that  
\begin{align*}
\frac{1}{R}&\leq    \frac{C\vep_0}{R}+  \frac{C}{R^{2}}\left(\sum_{\overline Q_j\cap B_{\rho_0R}\not\eq\emptyset}|Q_j|\right)^{1/n}.
\end{align*}
 By  selecting  $\vep_0>0$ small, we have  
\begin{equation}\label{eq-decay-1st-Qj}
 \frac{C}{R}\left(\sum_{\overline Q_j\cap B_{\rho_0R}\not\eq\emptyset}|Q_j|\right)^{1/n}\geq \frac{1}{2}.
 \end{equation} 
   Now we select $\rho_0>0$ sufficiently small such that  $32\sqrt{n}\rho_0\leq \frac{1}{8\sqrt{n}}$ in order  to show that  $32\sqrt{n}Q_j\subset B_{\frac{R}{4\sqrt{n}}}$ 
for any $Q_j$ satisfying     $\overline Q_j\cap B_{\rho_0R}\not\eq\emptyset.$  
Thus      $\displaystyle \bigcup_{\overline Q_j\cap B_{\rho_0R}\not\eq\emptyset}\overline Q_j$  is covered by  $\left\{32\sqrt{n}Q_j\,:\,  \overline Q_j\cap B_{\rho_0R}\not\eq\emptyset\right\}$   contained in $ B_{\frac{R}{4\sqrt{n}}}.$

 On the other hand, according to Theorem \ref{thm-abp} together  with  the previous argument,  we have    
\begin{equation}\label{eq-w-Harnack-1st-Q_j-measure}
\displaystyle\left|\left\{y\in 32\sqrt{n}Q_j : u(y)\leq M_0 \right\}\right|\geq \mu |Q_j|
\end{equation}
for some $M_0>1.$     Indeed,    from the previous argument,  we see  that $$c_n \frac{\tilde C}{l(R)R^{2}} {L(\rho_0R)} \max_{\overline Q_j}\left(\psi+\vep_0\right) d_j^2\leq M_1  $$
for a uniform constant $M_1>1$ with respect to $\sm\in[\sm_0,2)$ 
 since $d_j\leq \rho_0R.$    Then   it follows from Theorem \ref{thm-abp}   that 
 \begin{align*}
  \mu |Q_j| &\leq  \left|\left\{y\in 32\sqrt{n}Q_j : v(y)\geq\Gamma(y)- M_1 \right\}\right|\\
  & \leq \left|\left\{y\in 32\sqrt{n}Q_j : u(y)\leq \|\Phi\|_{L^{\infty}(\R^n)}+ M_1=:M_0 \right\}\right| 
 \end{align*}
 since $\Phi$ is uniformly  bounded  in $\R^n,$ and $\Gamma$ is positive  in $B_{3R}$.
Taking a subcover of $\left\{32\sqrt{n}Q_j\,:\,  \overline Q_j\cap B_{\rho_0R}\not\eq\emptyset\right\}$   with finite overlapping,   we deduce from   \eqref{eq-decay-1st-Qj} and \eqref{eq-w-Harnack-1st-Q_j-measure} that  for uniform constants $M_0>1$ and $\mu_0\in(0,1),$    
$$
{\left|\left\{u\leq M_0\right\}\cap Q_{\frac{R}{2\sqrt{n}}}\right|}\geq \displaystyle\left|\left\{ u\leq M_0 \right\}\cap B_{\frac{R}{4\sqrt{n}}}\right| > \mu_0\left| Q_{\frac{R}{2\sqrt{n}}}\right|,$$
    which finishes the proof. 
\end{proof}

The Calder\'on-Zygmund technique  combined with Lemma \ref{lem-decay-1st-step}  implies the following decay measure  estimate of   super-level sets  making use of the monotonicity of  the function $L.$ 
\begin{cor} Under the same assumption as Lemma \ref{lem-decay-1st-step}, 
we have  $$ {\left|\left\{u> M_0^k\right\}\cap Q_{\frac{R}{2\sqrt{n}}}\right|}\leq (1-\mu_0)^k\left| Q_{\frac{R}{2\sqrt{n}}}\right|,\quad\forall k=1,2,\cdots,$$
and hence 
\begin{equation*}
\left|\{u>t\}\cap Q_{\frac{R}{2\sqrt{n}}}\right|\leq CR^n  t^{-\ep}\quad\forall t>0,
\end{equation*}
where $C>0$ and $\ep>0$ are uniform constants depending only on $n,\ld,\Ld, a_0,a_\infty,\rho,$ and $ \sm_0.$  
\end{cor}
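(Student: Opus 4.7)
The plan is to prove the dyadic geometric decay $|\{u>M_0^k\}\cap Q_{R/(2\sqrt n)}|\le(1-\mu_0)^k|Q_{R/(2\sqrt n)}|$ by induction on $k$, using a Calder\'on--Zygmund (Krylov--Safonov type) stopping-time/covering argument on dyadic subcubes of $Q_{R/(2\sqrt n)}$, and then convert the geometric decay into the power estimate $|\{u>t\}\cap Q_{R/(2\sqrt n)}|\le CR^n t^{-\epsilon}$ by taking $k=\lfloor\log t/\log M_0\rfloor$ and setting $\epsilon:=-\log(1-\mu_0)/\log M_0$.

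The base case $k=1$ is exactly the conclusion of Lemma~\ref{lem-decay-1st-step} rewritten as $|\{u>M_0\}\cap Q_{R/(2\sqrt n)}|\le (1-\mu_0)|Q_{R/(2\sqrt n)}|$. For the inductive step I would invoke the following standard dyadic covering lemma: if $A\subset B\subset Q_{R/(2\sqrt n)}$ are measurable sets with $|A|<(1-\mu_0)|Q_{R/(2\sqrt n)}|$ and with the property that every dyadic subcube $Q'$ satisfying $|A\cap Q'|>(1-\mu_0)|Q'|$ has its dyadic predecessor $\widetilde Q'$ contained in $B$, then $|A|\le(1-\mu_0)|B|$. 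Applied with $A=\{u>M_0^{k+1}\}\cap Q_{R/(2\sqrt n)}$ and $B=\{u>M_0^{k}\}\cap Q_{R/(2\sqrt n)}$, this reduces the induction to verifying the implication on predecessors.

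The heart of the argument is thus the following claim: for any dyadic subcube $Q'\subset Q_{R/(2\sqrt n)}$ of side $r$ with $|\{u>M_0^{k+1}\}\cap Q'|>(1-\mu_0)|Q'|$, the predecessor $\widetilde Q'$ is contained in $\{u>M_0^{k}\}$. Arguing by contradiction, suppose there is a point $x_0\in\widetilde Q'$ with $u(x_0)\le M_0^{k}$, and rescale by setting $v:=u/M_0^{k}$ on an appropriate cube of side $2r$ (positioned so that its $Q_{3r/(2\sqrt n)}$-subcube contains $\widetilde Q'$ and its $Q_{r/(2\sqrt n)}$-subcube contains $Q'$, up to the geometric constants inherent in the statement of Lemma~\ref{lem-decay-1st-step}). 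Then $v\ge0$ in $\R^n$ and $\inf v\le 1$ on the relevant inner cube; the crucial point is that
\begin{equation*}
\cM^-_{\fL(\ld,\Ld,l)}v=\cM^-_{\fL(\ld,\Ld,l)}u/M_0^{k}\le \vep_0 L(\rho_0 R)/M_0^{k}\le \vep_0 L(\rho_0 r),
\end{equation*}
where the last inequality uses that $r\le R$ together with the fact that $L$ is monotone decreasing (so $L(\rho_0 r)\ge L(\rho_0 R)\ge L(\rho_0 R)/M_0^{k}$). Hence $v$ satisfies all the hypotheses of Lemma~\ref{lem-decay-1st-step} on the rescaled cube, yielding $|\{v\le M_0\}\cap Q_{r/(2\sqrt n)}|>\mu_0|Q_{r/(2\sqrt n)}|$, i.e. $|\{u\le M_0^{k+1}\}\cap Q'|>\mu_0|Q'|$, which contradicts the density assumption on $Q'$.

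The main technical subtlety, and the place where the regularly-varying setting differs from the pure-power scaling of \cite{CS1}, is precisely in making this rescaling step work uniformly: one has to check that the right-hand side bound $\vep_0 L(\rho_0 R)$ in hypothesis~(c) of Lemma~\ref{lem-decay-1st-step} is preserved (and not worsened) when we divide by $M_0^{k}$ and localize to a smaller cube of side $r<R$. Here the monotonicity of $L$ established in Lemma~\ref{lem-kernel-l}(c) does all the work — it replaces the role played by the scaling identity $r^{-\sigma}=\lambda^{\sigma}(r/\lambda)^{-\sigma}$ in the fractional-Laplacian case — and it is the only place where the structure of $l$ enters the iteration. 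Once the dyadic decay is in hand, the power decay is immediate by choosing $k$ with $M_0^{k}\le t<M_0^{k+1}$ and absorbing the geometric factor into $C$.
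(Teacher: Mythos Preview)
Your proposal is correct and follows exactly the approach the paper indicates: the paper does not spell out a proof but simply says that ``the Calder\'on--Zygmund technique combined with Lemma~\ref{lem-decay-1st-step} implies the following decay measure estimate of super-level sets making use of the monotonicity of the function $L$,'' and that is precisely what you do. Your identification of the monotonicity of $L$ as the single structural ingredient replacing the power-law scaling of \cite{CS1} is the key point, and it matches the paper's own emphasis; the geometry of the nested cubes $Q_{R'/(2\sqrt n)}\subset Q_{3R'/(2\sqrt n)}\subset Q_{2R'}$ with $R'=2\sqrt n\,(\text{side of }Q')$ indeed works out so that the predecessor $\widetilde Q'$ sits inside the middle cube and $Q_{2R'}$ stays inside $Q_{2R}$, so the rescaled hypotheses of Lemma~\ref{lem-decay-1st-step} are met.
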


Using a standard covering argument,  we deduce the weak Harnack inequality as follows. 
\begin{thm}[Weak Harnack inequality]\label{thm-weak-Harnack}
For $0<R<1,$ and $C_0>0,$  let $u $ be a nonnegative function  in $\R^n$    such that      
 $$\cM^-_{\fL(\ld,\Ld,l)} u\leq C_0  \quad\mbox{in $B_{2R}$}$$ in the viscosity sense.  
 Then we have 
 \begin{equation*}
\left|\{u>t\}\cap B_{R}\right|\leq CR^n\left( u(0)+ \frac{C_0}{L(\rho_0 R)}\right)^\ep t^{-\ep}\quad\forall t>0, 
\end{equation*}
and hence 
$$\left(\fint_{B_{R}}|u|^p\right)^{1/p}\leq C\left\{  u(0)+  \frac{C_0}{L(\rho_0 R)} \right\},$$
 where 
   $C>0 , \ep>0,  \rho_0\in(0,1),$ and $p>0$ are  uniform constants depending only on  $n,\ld,\Ld,   $  $ a_0,a_\infty,$ $ \rho$ and $\sm_0$. 
\end{thm}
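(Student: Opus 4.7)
The plan is to deduce the theorem from the preceding corollary of Lemma \ref{lem-decay-1st-step} by an initial normalization and a standard covering/chaining argument, then extract the $L^p$ estimate by integrating the distribution function.

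First, I would normalize. With $\vep_0,\rho_0 \in (0,1)$ denoting the uniform constants from Lemma \ref{lem-decay-1st-step}, set
$$M := u(0) + \frac{C_0}{\vep_0\,L(\rho_0 R)}.$$
Assuming $M > 0$ (otherwise the inequality is trivial), the function $\bar u := u/M$ is nonnegative on $\R^n$, satisfies $\bar u(0) \leq 1$, and, by the positive $1$-homogeneity of $\cM^-$, obeys
$$\cM^-\bar u \;\leq\; \frac{C_0}{M} \;\leq\; \vep_0\,L(\rho_0 R)\qquad\text{in } B_{2R}$$
in the viscosity sense. Since $0 \in Q_{3R/(2\sqrt n)} \subset B_{3R/2} \subset B_{2R}$, the hypothesis $\inf_{Q_{3R/(2\sqrt n)}}\bar u \leq 1$ holds, so Lemma \ref{lem-decay-1st-step} and its corollary applied to $\bar u$ yield
$$\bigl| \{\bar u > t\} \cap Q_{R/(2\sqrt n)}\bigr| \;\leq\; C R^n t^{-\ep}\qquad\forall\,t>0,$$
which rescales to $\bigl|\{u>s\}\cap Q_{R/(2\sqrt n)}\bigr| \leq CR^n M^\ep s^{-\ep}$ for every $s>0$.

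Next I would upgrade this central-cube estimate (which lives inside $B_{R/4}$) to the whole ball $B_R$ by a standard covering/chaining argument. For any $x_0 \in B_R$, translation invariance of $\cM^-$ combined with $B_{R/2}(x_0) \subset B_{2R}$ permits repeating the normalization argument centered at $x_0$, with $R/4$ in place of $R$; the only missing ingredient is a bound on $u$ at a suitable point near $x_0$ in place of $u(0)$. This is supplied by the central-cube decay estimate itself, which guarantees a positive-measure subset of $Q_{R/(2\sqrt n)}$ where $u \leq CM$. Chaining through a finite, dimensionally-bounded number of overlapping cubes between $0$ and $x_0$ then propagates the control, each step losing only a fixed geometric factor. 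A finite cover of $B_R$ by such shifted cubes delivers
$$\bigl|\{u>t\}\cap B_R\bigr| \;\leq\; C R^n M^\ep t^{-\ep}\qquad\forall\, t>0.$$
The $L^p$ bound for $p \in (0,\ep)$ then follows from the layer-cake formula $\int_{B_R} u^p = p\int_0^\infty t^{p-1}|\{u>t\}\cap B_R|\,dt$, splitting the integration at $t=M$ and using the trivial bound $|B_R|$ for the small-$t$ tail.

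The main obstacle is the chaining step: one has to check that the uniform constants of Lemma \ref{lem-decay-1st-step} propagate through the chain without degenerating as $\sm \to 2$. This requires, in particular, that the normalizing factor $L(\rho_0 R)/L(\rho_0 R/4)$ stay bounded uniformly for $\sm \in [\sm_0, 2)$, which is precisely what Lemma \ref{lem-kernel-l}(c) furnishes (since its polynomial two-sided bounds on $L$ force $L$ to be doubling-type with uniform constants). That uniformity, together with the full strength of Properties \ref{hypo-kernel-l}--\ref{hypo-kernel-l-infty}, is what distinguishes this proof from earlier probabilistic treatments of regularly-varying kernels and allows the weak Harnack inequality to be stated with constants that do not blow up as the order approaches $2$.
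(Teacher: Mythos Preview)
Your proposal is correct and follows exactly the route the paper indicates: the paper does not give a detailed argument for this theorem but simply states that it follows ``using a standard covering argument'' from the preceding corollary of Lemma~\ref{lem-decay-1st-step}, which is precisely the normalization--plus--chaining scheme you outline. One minor remark: the ``main obstacle'' you flag in the last paragraph is not really there, since the chaining only needs the \emph{monotonicity} of $L$ (so that $C_0/L(\rho_0 R') \le C_0/L(\rho_0 R)$ when $R'\le R$), which is immediate from the definition $L(r)=\sigma\int_r^1 s^{-1}l(s)\,ds$; the two--sided polynomial bounds of Lemma~\ref{lem-kernel-l}(c) are not needed here.
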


\subsection{Harnack inequality}

Making use of the weak Harnack inequality  in Theorem \ref{thm-weak-Harnack}, we prove the scale invariant  Harnack inequality for fully nonlinear elliptic integro-differential operators with respect to $\fL(\ld,\Ld,l)$, where the constant in the Harnack estimate depends only on  on  $n,\ld,\Ld,   $  $ a_0,a_\infty,$ $ \rho$ (in Properties \ref{hypo-kernel-l} and \ref{hypo-kernel-l-infty}) and $\sm_0 $.  The proof of \cite[Theorem 11.1]{CS1} has been adapted to our elliptic  integro-differential operators associated with regularly varying kernels at zero  and  infinity.

\begin{thm}\label{thm-Harnack-regularly-varying-kernel}
For $0<R<1,$ and $C_0>0,$ let $u\in C(B_{2R})$ be a nonnegative   function in $\R^n$ such that  
 $$\cM^-_{\fL(\ld,\Ld,l)} u\leq C_0L(\rho_0 R), \quad\mbox{and}\quad\cM^+_{\fL(\ld,\Ld,l)}u\geq -C_0L(\rho_0 R)\quad\mbox{in $B_{2R}$}$$ in the viscosity sense, where $\rho_0\in(0,1)$ is the constant as in Theorem \ref{thm-weak-Harnack}. 
Then we have 
$$\sup_{B_{\frac{R}{2}}}  u\leq C\left( u(0)+  {C_0}\right),$$
 where  a uniform constant  $C>0$ depends only on  $n,\ld,\Ld,   $  $ a_0,a_\infty,$ $ \rho$ and $\sm_0$. 

\end{thm}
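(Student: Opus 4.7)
\medskip

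\noindent\textbf{Proof plan.} The strategy is the by-now standard Krylov--Safonov/Caffarelli--Silvestre iteration, adapted to the regularly varying scale function $L$. After dividing the two inequalities by $L(\rho_0 R)$, we may assume $\cM^-u \le 1$ and $\cM^+u \ge -1$ in $B_{2R}$; after multiplying $u$ by a scalar we may further assume $u(0)+C_0 \le 1$, and the goal becomes $\sup_{B_{R/2}} u \le C$. The weak Harnack inequality (Theorem~\ref{thm-weak-Harnack}) immediately produces the integral control $\bigl(\fint_{B_R}u^p\bigr)^{1/p}\le C$, and the remainder of the proof consists of upgrading this $L^p$ bound to an $L^\infty$ bound by controlling how fast $u$ can grow near a ``bad'' point.

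\smallskip

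Following \cite{CS1}, I would introduce the auxiliary weight $h(x):=\bigl(R/2-|x|\bigr)^{-\gamma}$ with $\gamma:=n/p$, and define
\[
t_0 \,:=\, \inf\Bigl\{\,t>0 : u(x)\le t\,h(x) \text{ for all }x\in B_{R/2}\,\Bigr\}.
\]
If $t_0$ is bounded by a uniform constant, we are done on $B_{R/2}$. Otherwise there is a touching point $x_0\in B_{R/2}$ with $d_0:=R/2-|x_0|$ and $u(x_0)=t_0\, d_0^{-\gamma}$ very large, while on $B_{d_0/2}(x_0)$ the pointwise bound $u\le 2^\gamma u(x_0)$ holds. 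Continuity of $u$ together with the $L^p$ estimate then forces the set $\{u>u(x_0)/2\}\cap B_{d_0/2}(x_0)$ to have a non-trivial lower measure bound, namely at least of order $u(x_0)^{-p}d_0^{\,n}$. On the other hand, rescaling the Pucci inequalities to the ball $B_{d_0/2}(x_0)$ and invoking the power decay corollary after Lemma~\ref{lem-decay-1st-step} gives an upper bound $|\{u>Mu(x_0)\}\cap B_{d_0/4}(x_0)|\le (1-\mu_0)^k |B_{d_0/4}|$ with $M=M_0^k$. Choosing $k$ so that this upper bound is smaller than the lower bound produces, by a covering/continuity argument, a new point $x_1$ with $|x_1-x_0|\le d_0/4$ and $u(x_1)\ge M_0^k u(x_0)\gg u(x_0)$. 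Iterating this construction yields $x_j\to x_\infty\in \overline{B_{R/2}}$ with $u(x_j)\uparrow\infty$ geometrically, while $|x_j-x_{j-1}|$ shrinks geometrically. Since the $u$-values grow much faster than $h$ can compensate, this contradicts the initial $L^p$ bound.

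\smallskip

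The main obstacle specific to the regularly varying setting is that the operator does \emph{not} scale like a pure power of $\sigma$; under the dilation $\tilde u(y)=u(x_0+ry)$ the rescaled right-hand side acquires a factor $L(\rho_0 r)/L(\rho_0 R)$ rather than $(r/R)^\sigma$. To keep the iteration scale-invariant one must verify that at each stage the Pucci bounds $\cM^\pm \tilde u\gtrless \mp \vep_0 L(\rho_0 r)$ continue to hold for the rescaled function, with the $\vep_0$ of Lemma~\ref{lem-decay-1st-step} preserved. This is where Properties~\ref{hypo-kernel-l} and \ref{hypo-kernel-l-infty} together with Lemma~\ref{lem-kernel-l} are essential: the monotonicity $L(\rho_0 r)\ge L(\rho_0 R)$ for $r\le R$ and the two-sided ratio bounds $l(s)/l(r)\le a_0\max\{(s/r)^{-\sigma\pm\delta}\}$ guarantee that the rescaled data still satisfies the smallness hypothesis of the weak Harnack, uniformly in $\sigma\in[\sigma_0,2)$. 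The rest of the iteration is then combinatorially identical to the proof of \cite[Theorem~11.1]{CS1}, with all constants depending only on $n,\lambda,\Lambda,\sigma_0,a_0,a_\infty,\rho$.
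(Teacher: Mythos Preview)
Your proposal has the right overall architecture (normalize, touch from above with a power weight, derive a contradiction if the touching constant is too large), but the mechanism you describe for the contradiction contains a genuine gap, and it is precisely the gap that distinguishes the nonlocal Harnack from the local one.

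The step ``continuity of $u$ together with the $L^p$ estimate forces $|\{u>u(x_0)/2\}\cap B_{d_0/2}(x_0)|$ to have a nontrivial \emph{lower} bound'' is not correct: the weak Harnack inequality (equivalently, the $L^p$ bound) only gives an \emph{upper} bound on the measure of super-level sets. To obtain a \emph{lower} bound on $|\{u>u(x_0)/2\}|$ near $x_0$ you must use the subsolution property $\cM^+u\ge -C_0L(\rho_0R)$, applied to a function of the form $w:=\bigl((1-\theta/2)^{-\gamma}u(x_0)-u\bigr)^+$. But $w$ agrees with $(1-\theta/2)^{-\gamma}u(x_0)-u$ only on $B_{\theta r}(x_0)$, so $\cM^-w$ picks up a tail error
\[
(2-\sigma)\int_{\R^n\setminus B_{\theta r}(x_0-x)}\bigl(u(x+y)-(1-\theta/2)^{-\gamma}u(x_0)\bigr)^+\,\frac{l(|y|)}{|y|^n}\,dy,
\]
and this is the real obstacle --- not the rescaling of the right-hand side, which as you note is handled by monotonicity of $L$. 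Controlling this tail uniformly in $\sigma\in[\sigma_0,2)$ is the heart of the argument.

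The paper does this by an additional barrier trick: it touches $u$ from \emph{below} near the origin by $g_\beta(x)=\beta\bigl(1-|4x|^2/R^2\bigr)^+$ at some $x_1\in B_{R/4}$, and uses $\cM^-u(x_1)\le L(\rho_0R)$ together with $\mu^-(u,x_1,y)\le\mu^-(g_\beta,x_1,y)$ to deduce the integrated tail bound
\[
(2-\sigma)\int_{\R^n}\bigl(u(x_1+y)-2\bigr)^+\frac{l(|y|)}{|y|^n}\,dy\le C\,L(\rho_0R).
\]
This is then transferred from $x_1$ to $x\in B_{\theta r/2}(x_0)$ via the ratio bounds of Properties~\ref{hypo-kernel-l}--\ref{hypo-kernel-l-infty}, yielding $\cM^-w\le C(2R/\theta r)^{n+2}L(\rho_0\theta r/2)$. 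With this in hand the weak Harnack applies to $w$, and choosing $\gamma=(n+2)/\varepsilon$ (not $n/p$ as you wrote --- the extra $+2$ exactly cancels the tail blow-up) gives a one-shot contradiction between the upper bound from \eqref{eq-harnack-pf-2} and the lower bound on $|\{u>u(x_0)/2\}\cap B_{\theta r/4}(x_0)|$. No iteration of points $x_j\to x_\infty$ is used.
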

\begin{proof}
We may assume that $u>0, $ $u(0)\leq1,$ 
 and $C_0=1.$ Let $\ep>0$ be the constant as in Theorem \ref{thm-weak-Harnack} and let    $\gamma:=(n+2)/\ep.$ Consider the minimal value of $\ap>0$ such that
$$u(x)\leq h_\ap(x):=\ap \left(1-\frac{|x|}{R}\right)^{-\gamma}\quad\forall x\in B_R.$$ 
We claim that  $\ap>0$ is uniformly bounded. 
 Let $x_0$ be a point such that $u(x_0)=h_\ap(x_0). $ We may assume that $x_0\in B_R,$ otherwise $\ap$ is small. Let $ d:= R- |x_0|$ and $r:=d/2.$
 
 Let $A:=\left\{u> u(x_0)/2\right\}.$ According to the weak Harnack inequality in Theorem \ref{thm-weak-Harnack}, we have 
 \begin{equation*}
 |A\cap B_R|\leq CR^n \left(\frac{2}{u(x_0)}\right)^{\ep}\leq  CR^n \ap^{-\ep}\left(\frac{d}{R}\right)^{\gamma\ep}=  C  \ap^{-\ep}\left(\frac{d}{R}\right)^{3} d^n  \leq C  \ap^{-\ep} d^n.
 \end{equation*}
This implies that 
\begin{equation} \label{eq-harnack-pf-2}
 \left|\left\{u> u(x_0)/2\right\}\cap B_r(x_0)\right|    \leq C  \ap^{-\ep} |B_r(x_0)|
\end{equation}
since $B_r(x_0)\Subset B_R$ and $r=d/2.$

Now we will show that there is a uniform number $\theta\in(0,1)$ such that 
$$ \left|\left\{u< u(x_0)/2\right\}\cap B_{\theta r}(x_0)\right|    \leq \frac{1}{2}|B_{\theta r}(x_0)|$$ for a     large constant    $\ap>1,$ from  which \eqref{eq-harnack-pf-2}       yields  that $\ap>0$ is uniformly bounded. 
  We first  notice  that for $x\in B_{\theta r}(x_0),$
$$u(x)\leq h_\ap(x)\leq \ap \left(\frac{d-\theta r}{R}\right)^{-\gamma}=  \ap \left(\frac{d}{R}\right)^{-\gamma} \left(1-\frac{\theta }{2}\right)^{-\gamma}=\left(1-\frac{\theta}{2}\right)^{-\gamma}u(x_0).  $$
  For  $\theta\in(0,1),$ consider
$$v(x):=\left(1-\frac{\theta}{2}\right)^{-\gamma}u(x_0)-u(x).$$
  Note that   $v$ is nonnegative in $B_{\theta r}(x_0). $ To apply the weak Harnack inequality to $$w:=v^+,$$
 we will    compute $\cM^-w$  in $B_{\theta r}(x_0).$  
    First,  we see that for $x\in B_{\theta r}(x_0),$ 
\begin{align*}
\cM^- w(x)&= (2-\sm)\int_{\R^n} \left\{\ld \mu^+(v^+,x,y)-\Ld \mu^-(v^+,x,y)\right\}\frac{l(|y|)}{|y|^n} dy\\ 
&\leq \cM^-v(x)+ (2-\sm)\int_{\R^n} \left\{\Ld  v^-(x+y)+\Ld v^-(x-y)\right\}\frac{l(|y|)}{|y|^n} dy\\ 
&\leq  L(\rho_0 R)+ (2-\sm)\int_{\R^n} \left\{\Ld  v^-(x+y)+\Ld v^-(x-y)\right\}\frac{l(|y|)}{|y|^n} dy\\ 
&=  L(\rho_0 R)+ 2(2-\sm)\int_{\left\{v(x+y)<0\right\}} -\Ld  v^-(x+y) \frac{l(|y|)}{|y|^n} dy\\ 
&\leq   L(\rho_0 R)+2(2-\sm) \Ld\int_{\R^n\setminus B_{\theta r}(x_0-x)}  \left\{ u(x+y) -\left(1-\frac{\theta}{2}\right)^{-\gamma}u(x_0)\right\}^+\frac{l(|y|)}{|y|^n} dy
\end{align*}
in the viscosity sense, 
 where $v$ satisfies   $\cM^-v=\cM^-[-u] \leq L(\rho_0R)$ on $B_{2R}$ in the viscosity sense.  

Consider  the largest number $\beta>0$ such that 
  $$u(x)\geq g_\bt(x):= \bt\left(1-\frac{|4x|^2}{R^2}\right)^{+},$$ and let $x_1\in   B_{\frac{R}{4}} $ be a point  such that $u(x_1)=g_\bt (x_1).$ We observe that $\bt\leq 1$ since $u(0)\leq1.$ 
Using Lemma \ref{lem-kernel-l}, we have 
\begin{align*}
(2-\sm)\int_{\R^n} \mu^-(u,x_1,y)\frac{l(|y|)}{|y|^n}dy&\leq(2-\sm)\int_{\R^n} \mu^-(g_\bt,x_1,y)\frac{l(|y|)}{|y|^n}dy \\
&=(2-\sm)\left\{\int_{B_{\rho_0 R}} \mu^-(g_\bt,x_1,y)\frac{l(|y|)}{|y|^n}dy+\int_{\R^n\setminus B_{\rho_0R}} \mu^-(g_\bt,x_1,y)\frac{l(|y|)}{|y|^n}dy\right\}\\
&
\leq C(2-\sm)\bt \int_{B_{\rho_0 R}}  \frac{  |y|^2}{R^2}\frac{l(|y|)}{|y|^n}dy+2(2-\sm)\bt\int_{\R^n\setminus B_{\rho_0R}}\frac{l(|y|)}{|y|^n}dy\\
&
\leq \frac{C\bt}{R^2}2a_0 {\rho_0^2R^2l(\rho_0 R )}+2\beta\frac{2-\sm}{\sm}\left\{ L(\rho_0 R)+2a_{\infty}\right\}\\
&
\leq  {C\bt a_0 \rho_0^2}  {L(\rho_0 R)}+\frac{8\bt}{\sm_0} {L(\rho_0 R)}\leq C\bt L(\rho_0R)\leq CL(\rho_0R),
\end{align*}  
 where we recall that $0<\rho_0\leq \rho_1$; see    \eqref{eq-choice-rho1}. 
Since $\cM^-u\leq L(\rho_0R)$ on $B_{2R}$ in the viscosity sense,   it follows 
that  
\begin{equation*}
(2-\sm)\int_{\R^n} \mu^+(u,x_1,y)\frac{l(|y|)}{|y|^n}dy\leq CL(\rho_0R),
\end{equation*}
which asserts that
\begin{equation}\label{eq-harnack-proof-x1}
(2-\sm)\int_{\R^n} \left\{u(x_1+y)-2\right\}^+\frac{l(|y|)}{|y|^n}dy\leq CL(\rho_0R),
\end{equation}
   where  we note that $u(x_1)\leq \beta\leq 1$ and $u(x_1-y)>0$ for   any $y\in\R^n.$
 
 We may assume that $u(x_0)\geq2,$ otherwise $\ap$ is uniformly bounded.  In order to  estimate  $\cM^-w$ in $B_{\frac{\theta }{2}r}(x_0),$  
   we consider that    for $x\in B_{\frac{\theta }{2}r}(x_0)$
 \begin{align*}
 &(2-\sm) \int_{\R^n\setminus B_{\theta r}(x_0-x)}  \left\{ u(x+y) -\left(1-\frac{\theta}{2}\right)^{-\gamma}u(x_0)\right\}^+\frac{l(|y|)}{|y|^n} dy\\
 =&(2-\sm) \int_{\R^n\setminus B_{\theta r}(x_0-x)}  \left\{ u(x_1+x+y-x_1) -\left(1-\frac{\theta}{2}\right)^{-\gamma}u(x_0)\right\}^+\frac{l(|x+y-x_1|)}{|x+y-x_1|^n}\\
 &\qquad\qquad\cdot\left( \frac{|x+y-x_1|^n}{|y|^n } \frac{l(|y|)}{l(|x+y-x_1|)}  \right)dy. 
\end{align*}
 Since we see that   for    $x\in B_{\frac{\theta }{2}r}(x_0)$ and  $y\in \R^n\setminus B_{\theta r}(x_0-x) ,$  
  $$\frac{|x+y-x_1|^n}{|y|^n } \frac{l(|y|)}{l(|x+y-x_1|)}  dy \leq  a_0 a_\infty\left(\frac{6R}{\theta r}\right)^{n+\sm+\max(\delta,\delta')},$$
    it  follows from \eqref{eq-harnack-proof-x1} that  for  $x\in B_{\frac{\theta }{2}r}(x_0)$
  \begin{align*}
  \cM^-w(x)&\leq  L(\rho_0R)+ 2\Ld a_0a_\infty 3^{n+3}\left(\frac{2R}{\theta r}\right)^{n+\sm+\max(\delta,\delta')} CL(\rho_0 R)\\
  &\leq C\left(\frac{2R}{\theta r}\right)^{n+\sm+\delta}  L(\rho_0 R)\leq C\left(\frac{2R}{\theta r}\right)^{n+2}  L\left(\rho_0 \frac{\theta r}{2}\right)
  \end{align*}
  owing to monotonicity of the function  $L.$ 
  
Now we apply  the weak Harnack inequality  to $w$ in $B_{\frac{\theta }{2}r}(x_0)$ to obtain that 
\begin{align*}
&\left|\left\{ u<\frac{u(x_0)}{2}\right\}\cap B_{\theta r/4}(x_0)\right| =\left|\left\{ w> \left(\left(1-\frac{\theta}{2}\right)^{-\gamma}-\frac{1}{2}\right)u(x_0)\right\}\cap B_{\theta r/4}(x_0)\right| \\
&\leq C\left(\theta r\right)^n\left(w(x_0)+C\left(\frac{2R}{\theta r}\right)^{n+\sm+\delta} \right)^\ep  \left(\left(1-\frac{\theta}{2}\right)^{-\gamma}-\frac{1}{2}\right)^{-\ep}{u(x_0)^{-\ep}}\\
&= C\left(\theta r\right)^n\left(\left(\left(1-\frac{\theta}{2}\right)^{-\gamma}-1\right)u(x_0)+C\left(\frac{2R}{\theta r}\right)^{n+2} \right)^\ep  \left(\left(1-\frac{\theta}{2}\right)^{-\gamma}-\frac{1}{2}\right)^{-\ep}{u(x_0)^{-\ep}} \\
&\leq C\left(\theta r\right)^n\left(\left(\left(1-\frac{\theta}{2}\right)^{-\gamma}-1\right)^\ep+C\left(\frac{2R}{\theta r}\right)^{(n+2)\ep} \frac{1}{u(x_0)^\ep }\right)\\
&\leq C\left(\theta r\right)^n\left(\left(\left(1-\frac{\theta}{2}\right)^{-\gamma}-1\right)^\ep+C\left(\frac{2R}{\theta r}\right)^{(n+2-\gamma)\ep}\left(\frac{\theta}{4}\right)^{-\gamma\ep}  \ap^{-\ep}\right)\\
&\leq C\left(\theta r\right)^n\left(\left(\left(1-\frac{\theta}{2}\right)^{-\gamma}-1\right)^\ep+ \theta^{-\gamma\ep}  \ap^{-\ep}\right)
\end{align*}   
since 
$ u(x_0)=\ap(R/2r)^{\gamma}$ and $\gamma= (n+2)/\ep.$   We choose a uniform constant   $\theta>0$  sufficiently small so that 
$$C\left(\theta r\right)^n \left(\left(1-\frac{\theta}{2}\right)^{-\gamma}-1\right)^\ep \leq \frac{1}{4}|B_{\theta r/4}(x_0)| .$$  If $\ap>0$ is sufficiently  large, then we have  
$$C\left(\theta r\right)^n \theta^{-\gamma\ep}  \ap^{-\ep} \leq  \frac{1}{4}|B_{\theta r/4}(x_0)| ,$$
which  implies that
$$\left|\left\{ u<\frac{u(x_0)}{2}\right\}\cap B_{\theta r/4}(x_0)\right|  \leq \frac{1}{2}|B_{\theta r/4}(x_0)|. $$
On the other hand,  according to   \eqref{eq-harnack-pf-2}, we have that   for large $\ap>0$   
 $$ \left|\left\{u> \frac{u(x_0)}{2}\right\}\cap B_{\theta r/4}(x_0)\right|    \leq C  \ap^{-\ep} |B_{\theta r/4}(x_0)|<\frac{1}{2}|B_{\theta r/4}(x_0)|,$$ 
 which is a  contradiction.  Therefore, we conclude that $\ap>0$ is uniformly bounded and that   $\displaystyle\sup_{B_{\frac{R}{2}}}u\leq \ap 2^{\gamma},$   which completes  the proof.
\end{proof}


\subsection{H\"older continuity}
  From the Harnack inequality, we obtain the following H\"older regularity of the viscosity solutions to fully nonlinear elliptic integro-differential equations with respect to $\fL(\ld,\Ld,l)$.  
\begin{thm}[H\"older continuity]\label{thm-Holder}
For $0<R<1,$ and $C_0>0,$ let $u\in C(B_{2R})$ be a nonnegative   function in $\R^n$ such that  
 $$\cM^-_{\fL(\ld,\Ld,l)} u\leq C_0L(\rho_0 R), \quad\mbox{and}\quad\cM^+_{\fL(\ld,\Ld,l)}u\geq -C_0L(\rho_0 R)\quad\mbox{in $B_{2R}$}$$ in the viscosity sense.  
Then we have 
$$R^{\ap}\,[u]_{\ap, B_{R}}\leq C\left( \|u\|_{L^{\infty}(\R^n)}+C_0\right)$$
 where   $[u]_{\ap, B_{R}}$ stands for  the $\ap$-H\"older seminorm on $B_R$, and  uniform constants $\rho_0,\ap\in(0,1)$ and   $C>0$ depend only on  $n,\ld,\Ld,   $  $ a_0,a_\infty,$ $ \rho$ and $\sm_0$. 
\end{thm}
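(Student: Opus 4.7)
My strategy is the classical one: iterate the Harnack inequality of Theorem \ref{thm-Harnack-regularly-varying-kernel} to obtain geometric decay of the oscillation on dyadic balls. After normalizing so that $\|u\|_{L^\infty(\R^n)} + C_0 \leq 1$, I would prove by induction on $k \geq 0$ the existence of nondecreasing reals $m_k$ and nonincreasing reals $M_k$ such that
\begin{equation*}
m_k \leq u \leq M_k \text{ on } B_{r_k}, \qquad M_k - m_k = 2 \cdot 2^{-\ap k},
\end{equation*}
where $r_k := R\,2^{-k}$ and $\ap \in (0,1)$ is a uniform constant to be chosen at the end. The base case $k = 0$ uses $|u| \leq 1$, and granted the induction one obtains the pointwise H\"older estimate on $B_R$ by a standard dyadic argument.

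For the inductive step at scale $r_k$, introduce
\begin{equation*}
v(x) := \frac{2\bigl(u(r_k x) - (M_k + m_k)/2\bigr)}{M_k - m_k},
\end{equation*}
so that $|v|\leq 1$ on $B_1$. Applying the induction hypothesis on the nested balls $B_{r_{k-j}}$ for $j = 0, 1, \dots, k$ gives the tail bound $|v(x)| \leq 2(1+|x|)^{\ap}$ on $\R^n$. Under the change of variables $y = r_k z$, Property \ref{hypo-kernel-l} shows that the Pucci inequalities for $v$ on $B_2$ still take the form $\cM^-_{\fL(\ld,\Ld,l)} v \leq C$ and $\cM^+_{\fL(\ld,\Ld,l)} v \geq -C$, with a uniform constant $C$ on the right-hand side (since the original right-hand side $C_0 L(\rho_0 R)$ is, after rescaling, measured against $L(\rho_0 r_k)$, which is precisely the normalization built into Theorem \ref{thm-Harnack-regularly-varying-kernel}). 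Without loss of generality assume $v(0) \geq 0$ and work with the truncated nonnegative function $w := (1+v)\chi_{B_1}$. The error $\cM^\pm w - \cM^\pm(1+v)$ on $B_{1/2}$ is given by an integral of $1 + v$ over $\R^n \setminus B_1$ against the kernel $(2-\sm)l(|y|)/|y|^n$, which by Property \ref{hypo-kernel-l-infty} and Lemma \ref{lem-kernel-l}(d) is dominated by a uniform multiple of $L(\rho_0 r_k)$ provided $\ap$ is small. Hence Theorem \ref{thm-Harnack-regularly-varying-kernel} applied to $w$ on $B_{1/2}$ yields $\inf_{B_{1/2}} w \geq c_0 > 0$, from which either $m_{k+1} := m_k + \tfrac{c_0}{4}(M_k - m_k)$ or $M_{k+1} := M_k - \tfrac{c_0}{4}(M_k - m_k)$ closes the induction, provided $\ap$ is chosen so that $1 - c_0/4 \leq 2^{-\ap}$.

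The main obstacle is the tail analysis. Unlike in the local theory, the rescaled function $v$ does not have bounded tails; instead it grows polynomially as $(1+|x|)^{\ap}$, and this growth must be integrable against $(2-\sm)l(|y|)/|y|^n$ \emph{uniformly} in $\sm \in [\sm_0, 2)$. Property \ref{hypo-kernel-l-infty} supplies the bound $l(s) \leq a_\infty s^{-\sm + \delta'}$ for $s \geq 1$, so the tail integral is controlled as long as $\ap + \delta' < \sm \leq \sm_0$; this is exactly the constraint forcing the H\"older exponent $\ap$ to depend on $\sm_0$ (together with $n, \ld, \Ld, a_0, a_\infty, \rho$). Once this choice of $\ap$ is fixed compatibly with the requirement $1 - c_0/4 \leq 2^{-\ap}$, the induction runs, and the uniform constants in the final H\"older estimate are inherited from those of the Harnack inequality.
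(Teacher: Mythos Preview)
Your overall architecture---oscillation decay by induction, tail control via the nested induction hypothesis, truncation to a nonnegative function, and application of the Harnack estimate---is exactly the standard route the paper has in mind, and the tail discussion in your last paragraph is correct. The gap is the spatial rescaling. When you pass from $u$ to $v(x)=\frac{2}{M_k-m_k}\bigl(u(r_kx)-\tfrac{M_k+m_k}{2}\bigr)$, the change of variables $z=r_k y$ turns the kernel $l(|y|)/|y|^n$ into $l(r_k|y|)/|y|^n$, so $v$ satisfies a Pucci inequality for the class $\fL(\ld,\Ld,\tilde l)$ with $\tilde l(s)=l(r_ks)$, not for $\fL(\ld,\Ld,l)$. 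Property~\ref{hypo-kernel-l} does \emph{not} say $\tilde l$ is a constant multiple of $l$; it only gives Potter-type ratio bounds, and in particular the normalization $\tilde l(1)=1$ fails and the transition scale between Properties~\ref{hypo-kernel-l} and~\ref{hypo-kernel-l-infty} moves to $1/r_k$. Hence Theorem~\ref{thm-Harnack-regularly-varying-kernel} cannot be invoked for $v$ (or $w$) as written.

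The fix, which is the route implicit in the paper (and explicit in \cite[Theorem~12.1]{CS1}), is to suppress the spatial rescaling entirely: set $v_k(x):=\frac{2}{M_k-m_k}\bigl(u(x)-\tfrac{M_k+m_k}{2}\bigr)$ on $\R^n$ and apply the scale-aware estimate (Theorem~\ref{thm-Harnack-regularly-varying-kernel}, or more directly Lemma~\ref{lem-decay-1st-step}) at radius $r_k$. This is precisely why every intermediate result in Section~\ref{sec-harnack} carries the parameter $R\in(0,1)$. The right-hand side then becomes $\frac{2C_0}{M_k-m_k}L(\rho_0 R)$, which is dominated by $\vep_0 L(\rho_0 r_k)$ once $\alpha<\sm_0/2$, using Property~\ref{hypo-kernel-l}(a),(b); and your truncation and tail estimate for $(1+v_k)^-$ over $\R^n\setminus B_{r_k}$ goes through at that scale exactly as you describe, again provided $\alpha$ is small enough relative to $\sm_0$. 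With these adjustments your induction closes without change.
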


\subsection{$C^{1,\alpha}$ estimate} \label{subsec-C1alpha}
In this subsection, we present an interior $C^{1,\ap}$ estimate for viscosity solutions to the elliptic  integro-differetial operators 
as a important consequence of the H\"older estimate. 
To   apply the incremental quotients technique iteratively in the nonlocal setting,  the  cancellation condition \eqref{eq-kernel-c1alpha} below  for the kernels 
at infinity  is assumed;  refer to \cite[Section 13]{CS1}.  
For contants $\theta_0>0$ and $D_0>0,$  we define   
$\fL_1\left(\ld,\Ld, l;\theta_0,D_0\right)$ 
   by  the class  of the following linear integro-differential  operators 
  with the  kernels $K$:
\begin{equation*}
\cL u(x)=\int_{\R^n}\mu(u,x,y)  K(y) dy,
\end{equation*}
such that  
\begin{equation*}
  (2-\sm)\ld\f{ l (|y|)}{|y|^{n}}\le
K(y)
\le (2-\sm)\Ld\f{ l(|y|)}{|y|^{n}},
\end{equation*}
and 
\begin{equation}\label{eq-kernel-c1alpha} 
  \int_{\R^n\setminus B_{\theta_0}}\frac{|K(y)-K(y-h)|}{|y|}dy \leq D_0, \quad \forall |h|<\frac{\theta_0}{2}. 
  \end{equation}

\begin{thm}There is a uniform constant $\theta_0>0$ (depending  only on  $n,\ld,\Ld,   $  $ a_0,a_\infty,$ $ \rho,$  $\sm_0$)   such that if 
    $u\in C(B_{1})$ is  a bounded, nonnegative   function in $\R^n$ such that  
 $\cI u=0$ in $B_1$ in the viscosity sense for      an elliptic operator  $\cI$ with respect to $\fL_1(\ld,\Ld,l; \theta_0,D_0), $  
then  we have 
$$ \|u\|_{C^{1,\ap}(B_{1/2})}\leq C\left( \|u\|_{L^{\infty}(\R^n)}+|\cI 0|\right),$$
 where    $\ap\in(0,1)$ and   $C>0$ depend only on  $n,\ld,\Ld,   $  $ a_0,a_\infty,$ $ \rho,$  $\sm_0,$ and $D_0.$ 
\end{thm}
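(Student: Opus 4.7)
The plan is to follow the incremental quotient bootstrap of Caffarelli--Silvestre, relying on the translation invariance of kernels in $\fL_1(\ld,\Ld,l;\theta_0,D_0)$ together with the tail cancellation \eqref{eq-kernel-c1alpha}. The initial input is Theorem \ref{thm-Holder-intro} applied to $u$: since $\cI u=0$ and $\cI$ is elliptic with respect to $\fL_1\subset\fL(\ld,\Ld,l)$, we obtain $\cM^-_{\fL(\ld,\Ld,l)}u\leq|\cI 0|$ and $\cM^+_{\fL(\ld,\Ld,l)}u\geq-|\cI 0|$ in the viscosity sense, hence an initial H\"older bound $\|u\|_{C^{\ap_0}(B_{3/4})}\leq C(\|u\|_{L^\infty(\R^n)}+|\cI 0|)$ for some $\ap_0\in(0,1)$.

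For the bootstrap below regularity $1$, each $\cL\in\fL_1$ has a kernel $K(y)$ depending only on $y$, so $\cL$, and hence $\cI$, is translation invariant. Thus for $|h|$ small the translate $u_h(x):=u(x+h)$ still satisfies $\cI u_h=0$ in $B_{1-|h|}$, and ellipticity of $\cI$ yields $\cM^-_{\fL_1}(u_h-u)\leq 0\leq\cM^+_{\fL_1}(u_h-u)$. Normalizing by $|h|^{\ap_0}$, the incremental quotient $(u_h-u)/|h|^{\ap_0}$ is uniformly bounded in a smaller ball, with tails controlled by $\|u\|_{L^\infty(\R^n)}$. Theorem \ref{thm-Holder-intro} then upgrades $u$ from $C^{\ap_0}$ to $C^{2\ap_0}$. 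Iterating finitely many times, one reaches $u\in C^{\beta}$ for some $\beta\in(1,2)$, so that $u$ is classically differentiable with bounded gradient on a smaller ball.

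At the final stage I would pass from $C^\beta$ with $\beta>1$ to $C^{1,\ap}$ by applying the incremental-quotient argument to $\nabla u$ itself. The difficulty is that $\nabla u$ is only bounded on a subdomain, so the tail piece of $\cL\nabla u$ is not a priori finite. This is precisely where \eqref{eq-kernel-c1alpha} enters: a change of variables $y\mapsto y+h$ converts the offending tail integral on $\R^n\setminus B_{\theta_0}$ into one in which the $|h|^{-1}$-difference falls on the kernel, $[K(y)-K(y-h)]/|h|$, and \eqref{eq-kernel-c1alpha} bounds the resulting expression by $D_0\|u\|_{L^\infty(\R^n)}$ uniformly in $h$. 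Combined with the standard Pucci control of the near-field piece on $B_{\theta_0}$, one verifies that the incremental quotients of $\nabla u$ solve Pucci-type inequalities in a smaller ball with bounded right-hand side; a final application of Theorem \ref{thm-Holder-intro} yields $\nabla u\in C^\ap$ with the claimed estimate.

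The main obstacle will be the uniform control of constants as $\sm\uparrow 2$ throughout the bootstrap. At each iteration the right-hand sides of the Pucci inequalities carry factors of $L(\rho_0 R)^{-1}$ from Theorem \ref{thm-Holder-intro}, and these must be balanced against the $(2-\sm)$-factors appearing in the near-field integrals when combining near-field and tail estimates at the gradient step. Choosing $\theta_0=\theta_0(n,\ld,\Ld,a_0,a_\infty,\rho,\sm_0)$ small enough so that the ellipticity contribution on $B_{\theta_0}$, governed by Lemma \ref{lem-kernel-l}, dominates the $D_0$-dependent tail term uniformly in $\sm\in[\sm_0,2)$, is the delicate point, and this is what ultimately fixes the admissible range of $\theta_0$.
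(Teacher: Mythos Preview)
Your bootstrap is the right overall strategy, but there is a genuine gap in the sub-Lipschitz iterations. You assert that the incremental quotient $w_h(x):=(u(x+h)-u(x))/|h|^{\ap_0}$ is ``uniformly bounded in a smaller ball, with tails controlled by $\|u\|_{L^\infty(\R^n)}$,'' and then apply Theorem~\ref{thm-Holder-intro} directly. But Theorem~\ref{thm-Holder-intro} requires the function to be bounded on all of $\R^n$, and outside the ball where the $C^{\ap_0}$ estimate holds you only have $|w_h|\le 2\|u\|_{L^\infty(\R^n)}|h|^{-\ap_0}$, which blows up as $h\to 0$. Without further work the H\"older estimate for $w_h$ degenerates and the step from $C^{\ap_0}$ to $C^{2\ap_0}$ does not go through.

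The paper's remedy is to use the cancellation condition \eqref{eq-kernel-c1alpha} already at \emph{every} step of the iteration, not only at the final gradient stage. One introduces a cut-off $\eta$ supported where the $C^{\ap_0}$ bound is available and splits $w_h=w_{h,1}+w_{h,2}$ with $w_{h,1}:=\eta w_h$ and $w_{h,2}:=(1-\eta)w_h$. The globally bounded piece $w_{h,1}$ satisfies Pucci inequalities with a right-hand side containing $\cL w_{h,2}$; for $x$ in a smaller ball where $w_{h,2}$ vanishes, $\cL w_{h,2}(x)$ is a pure tail integral of $(u(x+y+h)-u(x+y))/|h|^{\ap_0}$ against $K(y)$, and the change of variables $y\mapsto y-h$ transfers the difference to the kernel, producing $K(y)-K(y-h)$. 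It is precisely \eqref{eq-kernel-c1alpha} that converts this into $C\|u\|_{L^\infty(\R^n)}$ uniformly in $h$. Then Theorem~\ref{thm-Holder} applies to $w_{h,1}$ and gives $C^{2\ap_0}$. After $[1/\ap_0]$ repetitions one reaches Lipschitz (not $C^\beta$ with $\beta>1$ as you wrote; the H\"older estimate alone cannot cross the Lipschitz threshold), and one further application of the same cut-off argument to the Lipschitz quotient yields $C^{1,\ap}$. Your discussion of choosing $\theta_0$ to balance near-field and tail contributions is not the crux; the essential role of $\theta_0$ and $D_0$ is exactly the tail bound just described, and it is needed throughout the bootstrap.
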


Making use of the incremental quotients, we   obtain   $C^{1,\ap}$-estimate for nonlocal operators. The  uniform H\"older estimate in Theorem \ref{thm-Holder}  is applicable to 
$$w_h(x):=\frac{u(x+h)-u(x)}{|h|^\ap}$$
for   any small   vector $h\in\R^n$ when \eqref{eq-kernel-c1alpha} holds. Indeed, we introduce the cut-off function $\eta$  supported in a smaller ball, and divide the incremental quotient $w_h$ into two functions $w_{h,1}:=\eta w_h$ and $w_{h,2}:=(1-\eta)w_h.$ With the help of \eqref{eq-kernel-c1alpha},       we deal with the   incremental quotient of the kernel $K$ replacing the       incremental quotient of $u$  
in order  to  show that $|\cL w_{h,2}|$ is bounded by $C\|u\|_{L^{\infty}(\R^n)}$. 
So we apply Theorem \ref{thm-Holder} to $w_{h,1}$ to deduce $C^{2\ap}$ for the H\"older exponent $\ap>0$ in  Theorem \ref{thm-Holder}.
 Employing  the    procedure $[1/\ap]$ times, it follows that $u$ is Lipschitz continuous.  By applying the  previous argument to  the Lipschitz quotient of $u$, we deduce the uniform   $C^{1,\ap}$-estimate.   
Note that the $C^{1,\ap}$-estimate is not scale-invariant since it relies on the values $\theta_0$ and $D_0.$ 

\subsection{Truncated kernels at infinity}\label{subsec-truncated}

In this subsection, we are concerned with the elliptic integro-differential operators associated with the symmetric kernels  satisfying Property \ref{hypo-kernel-l} near zero 
which  may not satisfy Property \ref{hypo-kernel-l-infty} at infinity.  This subsection corresponds to \cite[Section 14]{CS1}  which involves in the results of the fractional Laplacian type integro-differential operators. 
  Consider the   linear integro-differential operator $\cL$
  \begin{equation*}
\cL u(x)=\int_{\R^n}\mu(u,x,y)  K(y) dy,
\end{equation*}
   with  the    nonnegative kernel $K$ which is    split by 
$$ K(y)= K_1(y)+K_2(y)\geq 0\quad\mbox{in $\R^n$},$$
where  the linear integro-differential operator $\cL_1$ with the kernel $K_1$ belongs to $\fL(\ld,\Ld,l),$ and $\|K_2\|_{L^1(\R^n)}\leq \kappa$ for $\kappa\geq0.$ For $\kappa\geq0,$  we denote   by $\tilde \fL(\ld,\Ld,l,\kappa)$ the class of   all the   linear integro-differential operators above. 
Using  Lemma \ref{lem-kernel-l},     we see that the    truncated  kernel   $K$ at infinity  satisfying  $$(2-\sm)\ld\f{ l(|y|)}{|y|^{n}} \chi_{B_1(0)}\leq K(y)\leq(2-\sm)\Ld\f{ l(|y|)}{|y|^{n}} \chi_{B_1(0)}$$
is one of  the typical   kernels  for   the linear integro-differential  operators belonging to the  class $\tilde\fL\left(\ld,\Ld,l, 2(2-\sm)a_\infty/\sm\right)$. 
 It is obvious  that the larger class $\tilde\fL(\ld,\Ld,l,\kappa)$ coincides with $\fL(\ld,\Ld,l)$ for  $\kappa=0.$  The Pucci type  extremal operators with respect to the class $\tilde \fL(\ld,\Ld,l,\kappa)$ are defined as 
\begin{equation*}
\begin{split}
\cM^+_{  \tilde\fL\left(\ld,\Ld, l,\kappa\right)} u&:=\sup_{\cL\in \tilde\fL\left(\ld,\Ld, l,\kappa\right)}\cL u,\\
\cM^-_{ \tilde \fL\left(\ld,\Ld, l,\kappa\right)} u&:=\inf_{\cL\in\tilde\fL\left(\ld,\Ld, l,\kappa\right)}\cL u.
\end{split}
\end{equation*}
     The same  argument  as in \cite[Lemma 14.1]{CS1} provides the following lemma regarding the relation between the Pucci type   operators with respect to the classes $\tilde\fL(\ld,\Ld,l,\kappa)$ and  $\fL(\ld,\Ld,l).$
\begin{lemma} \label{lem-tr-kernel}
Let $u$ be a bounded function in $\R^n$ and $C^{1,1}$ at $x.$ Then we have 
$$\cM^-_{\tilde\fL(\ld,\Ld,l,\kappa)}u(x)\geq \cM^-_{\fL(\ld,\Ld,l)}u(x)-4\kappa \|u\|_{L^{\infty}(\R^n)},
$$
and 
$$\cM^+_{\tilde\fL(\ld,\Ld,l,\kappa)}u(x)\leq \cM^+_{\fL(\ld,\Ld,l)}u(x)+4\kappa \|u\|_{L^{\infty}(\R^n)}.
$$
\end{lemma}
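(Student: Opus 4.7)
\textbf{Proof proposal for Lemma \ref{lem-tr-kernel}.} The plan is to exploit the linearity of the splitting $K = K_1 + K_2$ at the level of each operator in $\tilde\fL(\ld,\Ld,l,\kappa)$, and then use the elementary fact $\inf(A+B)\geq \inf A+\inf B$ to decouple the two contributions before taking extrema. More concretely, I would fix an arbitrary $\cL \in \tilde\fL(\ld,\Ld,l,\kappa)$ with kernel $K=K_1+K_2$, and write
\[
\cL u(x) \;=\; \int_{\R^n}\mu(u,x,y)\,K_1(y)\,dy \;+\; \int_{\R^n}\mu(u,x,y)\,K_2(y)\,dy \;=:\; \cL_1 u(x) + \cL_2 u(x),
\]
noting that the splitting is legitimate because each integrand is absolutely integrable: near $y=0$ the $C^{1,1}$ condition on $u$ at $x$ gives $|\mu(u,x,y)|\leq M|y|^2$ (together with Lemma \ref{lem-kernel-l}(a) for $K_1$), while for $K_2$ the global bound $|\mu(u,x,y)|\leq 4\|u\|_{L^\infty(\R^n)}$ combined with $K_2\in L^1(\R^n)$ controls the tail.

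Next I would estimate the ``perturbation'' piece by the crude $L^\infty$--$L^1$ bound
\[
|\cL_2 u(x)| \;\leq\; 4\,\|u\|_{L^\infty(\R^n)}\,\|K_2\|_{L^1(\R^n)} \;\leq\; 4\kappa\,\|u\|_{L^\infty(\R^n)}.
\]
Since $\cL_1\in\fL(\ld,\Ld,l)$, by definition $\cL_1 u(x)\geq \cM^-_{\fL(\ld,\Ld,l)}u(x)$. Adding these two bounds gives $\cL u(x) \geq \cM^-_{\fL(\ld,\Ld,l)}u(x) - 4\kappa\|u\|_{L^\infty(\R^n)}$. Taking the infimum over all $\cL\in\tilde\fL(\ld,\Ld,l,\kappa)$ on the left yields the first inequality. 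The second inequality follows by the symmetric argument: $\cL u(x)\leq \cM^+_{\fL(\ld,\Ld,l)}u(x) + 4\kappa\|u\|_{L^\infty(\R^n)}$, then take the supremum.

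There is essentially no obstacle here beyond a small bookkeeping check: one must ensure that when splitting $K=K_1+K_2$ we may assume both $K_1$ and $K_2$ are symmetric (the operator depends only on the symmetric part of the kernel, so one can symmetrize $K_2$ without increasing its $L^1$ norm, and $K_1$ is already symmetric as an element of $\fL(\ld,\Ld,l)$), and that the pair $(\cL_1,K_2)$ may indeed be chosen independently in the infimum (which is exactly the definition of $\tilde\fL$). The only quantitative input used is the trivial pointwise bound $|\mu(u,x,y)|\leq 4\|u\|_{L^\infty(\R^n)}$; no regular variation machinery is needed for this lemma.
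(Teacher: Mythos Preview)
Your proposal is correct and is exactly the argument the paper has in mind: the paper does not write out a proof but simply cites \cite[Lemma 14.1]{CS1}, whose proof is precisely the splitting $\cL u=\cL_1 u+\cL_2 u$ followed by the crude bound $|\cL_2 u(x)|\le 4\|u\|_{L^\infty}\|K_2\|_{L^1}\le 4\kappa\|u\|_{L^\infty}$ and an infimum/supremum over the class. Your additional remarks on absolute integrability and symmetrization are fine bookkeeping and do not change the argument.
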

Applying  Theorem \ref{thm-Holder}   combined  with    Lemma \ref{lem-tr-kernel}, we deduce the H\"older estimate for the  elliptic  integro-differential operators associated with truncated kernels at infinity. 
\begin{thm} \label{thm-Holder-truncated}
For $0<R<1,$ and $C_0>0,$ let $u\in C(B_{2R})$ be a bounded, nonnegative   function in $\R^n$ such that  
 $$\cM^-_{\tilde\fL(\ld,\Ld,l,\kappa)} u\leq C_0L(\rho_0 R), \quad\mbox{and}\quad\cM^+_{\tilde\fL(\ld,\Ld,l,\kappa)}u\geq -C_0L(\rho_0 R)\quad\mbox{in $B_{2R}$}$$ in the viscosity sense.  
Then we have 
$$R^\ap\,[u]_{{\ap},B_{R}}\leq C\left\{(1+4\kappa)\|u\|_{L^{\infty}(\R^n)}+C_0\right\}, $$
 where    uniform constants $\rho_0,\ap\in(0,1)$ and   $C>0$ depend only on  $n,\ld,\Ld,   $  $ a_0,a_\infty,$ $ \rho$ and $\sm_0$. 
\end{thm}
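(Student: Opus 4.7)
The plan is to deduce Theorem \ref{thm-Holder-truncated} directly from Theorem \ref{thm-Holder} by means of Lemma \ref{lem-tr-kernel}. The essential point is that the enlarged class $\tilde\fL(\ld,\Ld,l,\kappa)$ differs from $\fL(\ld,\Ld,l)$ only through an integrable kernel of total mass at most $\kappa$, so the two families of Pucci extremal operators differ by at most $4\kappa\|u\|_{L^\infty(\R^n)}$ on any test function.

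First, I would upgrade Lemma \ref{lem-tr-kernel} to a viscosity statement. If $\varphi\in C^{1,1}(x)$ touches $u$ from above at $x\in B_{2R}$ in a neighborhood $N$, the associated test function $v$ (equal to $\varphi$ on $N$ and to $u$ on $\R^n\setminus N$) is bounded and $C^{1,1}$ at $x$, so both $\cM^-_{\tilde\fL(\ld,\Ld,l,\kappa)}v(x)$ and $\cM^-_{\fL(\ld,\Ld,l)}v(x)$ are defined classically. Lemma \ref{lem-tr-kernel} applied to $v$, together with $\|v\|_{L^\infty(\R^n)}\le \|u\|_{L^\infty(\R^n)}+O(1)$, yields
\[
\cM^-_{\fL(\ld,\Ld,l)}v(x)\le\cM^-_{\tilde\fL(\ld,\Ld,l,\kappa)}v(x)+4\kappa\|u\|_{L^\infty(\R^n)}.
\]
An analogous inequality holds for $\cM^+$ with test functions touching from below. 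Consequently, in the viscosity sense on $B_{2R}$,
\[
\cM^-_{\fL(\ld,\Ld,l)}u\le C_0L(\rho_0R)+4\kappa\|u\|_{L^\infty(\R^n)},\qquad
\cM^+_{\fL(\ld,\Ld,l)}u\ge -C_0L(\rho_0R)-4\kappa\|u\|_{L^\infty(\R^n)}.
\]

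Second, I would rewrite these two bounds in the form required by Theorem \ref{thm-Holder} by setting
\[
\tilde C_0 := C_0+\frac{4\kappa\|u\|_{L^\infty(\R^n)}}{L(\rho_0R)},
\]
so that both right-hand sides equal $\pm\tilde C_0 L(\rho_0R)$. Applying Theorem \ref{thm-Holder} gives
\[
R^\ap[u]_{\ap,B_R}\le C\bigl(\|u\|_{L^\infty(\R^n)}+\tilde C_0\bigr).
\]
To obtain the advertised form, I would use that $L$ is nonincreasing on $(0,1]$ and $R<1$, so $L(\rho_0 R)\ge L(\rho_0)$; by Lemma \ref{lem-kernel-l}(c), $L(\rho_0)\ge \frac{1}{2a_0^2}\bigl(\rho_0^{-\sm_0/2}-1\bigr)$, which is a uniform positive constant depending only on $\rho_0$, $a_0$, $\sm_0$. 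Therefore $1/L(\rho_0R)\le C$ uniformly, and absorbing this into the constant yields
\[
R^\ap[u]_{\ap,B_R}\le C\bigl\{(1+4\kappa)\|u\|_{L^\infty(\R^n)}+C_0\bigr\},
\]
as claimed, with uniform constants depending only on the parameters listed in Theorem \ref{thm-Holder}.

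The only nontrivial point is the viscosity promotion of Lemma \ref{lem-tr-kernel} in the first step, where one must verify that the truncation argument is compatible with the test-function definition of viscosity solutions; this is routine since the splitting $K=K_1+K_2$ is linear and the $K_2$-contribution is controlled pointwise by $4\kappa\|v\|_{L^\infty(\R^n)}$, independently of regularity of $v$ beyond boundedness. Once this is in place, the whole proof is a one-line reduction to Theorem \ref{thm-Holder}.
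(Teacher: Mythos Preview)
Your approach is correct and is exactly what the paper does: combine Lemma~\ref{lem-tr-kernel} with Theorem~\ref{thm-Holder}, then absorb $1/L(\rho_0 R)$ via the uniform lower bound in Lemma~\ref{lem-kernel-l}(c). One small correction: for the supersolution inequality $\cM^-_{\tilde\fL}u\le C_0L(\rho_0R)$ the test function $\varphi$ touches $u$ from \emph{below} (and dually for $\cM^+$); after shrinking the neighborhood this gives $\|v_{N'}\|_{L^\infty}\le\|u\|_{L^\infty}+\epsilon$, and the monotonicity $v_{N'}\ge v_N$ for $N'\subset N$ (hence $\cM^-_{\fL}v_N(x)\le\cM^-_{\fL}v_{N'}(x)$) transfers the bound back to the original test configuration, replacing your hand-wavy ``$+O(1)$''.
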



\section{Uniform regularity   estimates  for  certain   integro-differential operators   as $\sm\to 2-$    }\label{sec-uniform-harnack}
In this section,     we   study the uniform   Harnack inequality and H\"older estimate for the elliptic  integro-differential operators associated with   the certain  regularly varying kernels  at zero, where the regularity estimates   remain  uniform      
as the order $\sm\in(0,2)$ of the operator tends to $2.$ Consider  
    a  measurable function  $l_0:(0,+\infty)\to(0,+\infty)$ 
   which  stays locally bounded away from $0$ and $+\infty$,    and  is   slowly varying at zero and infinity.  
 We may assume that  $l_0(1)=1.$
 For $\sm\in(0,2), $ we define 
 \begin{equation}\label{eq-l-sigma}
 l_\sm(r):=r^{-\sm}l_0(r)^{2-\sm},\quad\forall r>0. 
 \end{equation}
It is easy to check that the function $l_\sm$        varies  regularly  at zero and infinity  with index $-\sm\in(-2,0).$
As seen in Subsection \ref{subsec-operators},  let $\fL\left(\ld,\Ld, l_\sm\right)$ denote   the class of all linear   integro-differential operators  
\begin{equation*}
\cL u(x)=\int_{\R^n}\mu(u,x,y)K(y)\,dy
\end{equation*} 
 with  the  kernels  $K$     satisfying
 \begin{equation*}
(2-\sm)  \ld\f{ l_\sm (|y|)}{|y|^{n}}\le
K(y)\le(2-\sm) \Ld\f{ l_\sm(|y|)}{|y|^{n}},
\end{equation*}
 where 
 $\mu(u,x,y):=u(x+y)+u(x-y)-2u(x).$  For a given constant $\sm_0\in(0,2)$ and a function  $l_0,$ 
  the  uniform Harnack inequality and H\"older estimate  for   fully   nonlinear    elliptic integro-differential operators  with respect to the class $\fL(\ld,\Ld,l_\sm)$ for   $\sm\in[\sm_0,2) $ are established.  In fact, 
 once  it is proved  that the function $l_\sm$ satisfies  Properties \ref{hypo-kernel-l}  and \ref{hypo-kernel-l-infty}  with  uniform constants $a_0, a_\infty\geq 1$ and $\rho\in(0,1)$ with respect to $\sm\in[\sm_0,2),$ 
the uniform regularity estimates follow from the results of  Section \ref{sec-harnack}.  Here, the regularity  estimates depend only on $n,\ld,\Ld, \sm_0,$ and the given function $l_0.$  Therefore,  it suffices to prove the following proposition in order to obtain   Theorem \ref{thm-Harnack-l-sigma-intro}. 

 \begin{prop} \label{prop-l-sigma}
 For a given $\sm_0\in(0,2),$ let  
 $$\delta_0:=\min\left(  \frac{\sm_0}{2(2-\sm_0)}, \frac{1}{2}\right).$$
  For $\sm\in[\sm_0,2),$ let $l_\sm$ be defined as   \eqref{eq-l-sigma}.  
  Then  $l_\sm$ satisfies Properties \ref{hypo-kernel-l}   and \ref{hypo-kernel-l-infty} with  uniform constants $a_0, a_\infty\geq 1$ and $\rho\in(0,1)$  with respect to $\sm\in[\sm_0,2),$ 
  where the constants  $a_0,a_\infty$ and $\rho$ depend only on  $\sm_0$, and  the slowly varying function $ l_0$.  

 \end{prop}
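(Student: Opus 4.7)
The plan is to derive Properties~\ref{hypo-kernel-l} and~\ref{hypo-kernel-l-infty} for $l_\sigma(r)=r^{-\sigma}l_0(r)^{2-\sigma}$ from the uniform Potter bounds for the slowly varying function $l_0$ recalled in Appendix~\ref{sec-regular-variation}: for every $A>1$ and every $\eta>0$ there exists $\rho_0=\rho_0(A,\eta,l_0)\in(0,1)$ such that
\[
A^{-1}\min\bigl\{(s/r)^{\eta},(s/r)^{-\eta}\bigr\}\le\frac{l_0(s)}{l_0(r)}\le A\max\bigl\{(s/r)^{\eta},(s/r)^{-\eta}\bigr\}
\]
holds both for all $r,s\in(0,\rho_0]$ and for all $r,s\in[\rho_0^{-1},\infty)$; by local boundedness of $l_0$ these inequalities extend to all $r,s\in(0,1]$ (resp.~$[1,\infty)$) with a larger uniform constant $\widetilde A=\widetilde A(A,l_0)$. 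I would fix once and for all $\eta:=\delta_0/2$, depending only on $\sigma_0$, and set $\delta:=\eta(2-\sigma)$. The definition of $\delta_0$ forces $\delta<\tfrac12\min(2-\sigma,\sigma)$ uniformly in $\sigma\in[\sigma_0,2)$, since $\eta<1/2$ yields $\delta<(2-\sigma)/2$ and $\eta\le\sigma_0/(4(2-\sigma_0))$ yields $\delta\le\sigma_0/4<\sigma/2$.

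Properties~\ref{hypo-kernel-l}(a) and~\ref{hypo-kernel-l-infty} then follow immediately from the factorization
\[
\frac{l_\sigma(s)}{l_\sigma(r)}=(s/r)^{-\sigma}\Bigl(\frac{l_0(s)}{l_0(r)}\Bigr)^{2-\sigma}\le\widetilde A^{2-\sigma}\max\bigl\{(s/r)^{-\sigma+\delta},(s/r)^{-\sigma-\delta}\bigr\},
\]
yielding $a_0=a_\infty=\widetilde A^2$ uniformly in $\sigma\in[\sigma_0,2)$, while Property~\ref{hypo-kernel-l}(c) is immediate from $l_\sigma(1)=1\cdot l_0(1)^{2-\sigma}=1$. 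The work is in Property~\ref{hypo-kernel-l}(b). The change of variables $u=s/r$ gives
\[
\frac{L_\sigma(r)}{l_\sigma(r)}=\sigma\int_1^{1/r}u^{-1-\sigma}\Bigl(\frac{l_0(ur)}{l_0(r)}\Bigr)^{2-\sigma}du,
\]
and for $r\le\rho_0$ I would split this integral at $u=\rho_0/r$. On $[1,\rho_0/r]$ both $r$ and $ur$ lie in $(0,\rho_0]$, so Potter's sharp bound applies; since $u\ge1$ one obtains an upper contribution of $A^{2-\sigma}\sigma/(\sigma-\eta(2-\sigma))\le 2A^{2-\sigma}$ thanks to the choice of $\eta$. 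On the tail $[\rho_0/r,1/r]$, the boundedness of $l_0$ on $[\rho_0,1]$ together with the lower Potter bound $l_0(r)\gtrsim(r/\rho_0)^\eta\,l_0(\rho_0)$ shows the contribution is at most a constant multiple of $(r/\rho_0)^{\sigma-\eta(2-\sigma)}\le(r/\rho_0)^{\sigma_0/2}$, which vanishes uniformly in $\sigma$ as $r\to 0$. Choosing $A$ close enough to $1$ (shrinking $\rho_0$ accordingly) and then $\rho\le\rho_0$ small enough to absorb the tail delivers $L_\sigma(r)/l_\sigma(r)\le 2$ uniformly in $\sigma\in[\sigma_0,2)$; the analogous argument with Potter's lower bound yields $L_\sigma(r)/l_\sigma(r)\ge 1/2$.

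The main obstacle is the simultaneous uniform control in $\sigma$ of the three factors appearing in the step above: the Potter multiplicative constant $A^{2-\sigma}$, the integral prefactor $\sigma/(\sigma\mp\eta(2-\sigma))$, and the tail from the truncated integration range. The first is managed by the freedom in Potter's theorem to make $A$ arbitrarily close to $1$ by shrinking $\rho_0$; the second is handled by the tailored exponent $\eta=\delta_0/2$, which is precisely what the definition of $\delta_0=\min(\sigma_0/(2(2-\sigma_0)),1/2)$ is designed for; and the third is absorbed by further decreasing $\rho$, exploiting the uniform positivity $\sigma-\eta(2-\sigma)\ge\sigma_0/2$.
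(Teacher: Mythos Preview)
Your argument for Property~\ref{hypo-kernel-l}(a), Property~\ref{hypo-kernel-l}(c), and Property~\ref{hypo-kernel-l-infty} is essentially identical to the paper's (you take the Potter exponent $\eta=\delta_0/2$ where the paper takes $\delta_0$, a harmless variant). The interesting difference is in Property~\ref{hypo-kernel-l}(b). The paper proves the stronger statement $L_\sigma(r)/l_\sigma(r)\to 1$ uniformly in $\sigma\in[\sigma_0,2)$ (Lemma~\ref{lem-l-sigma}): it fixes a threshold $1/r_0$ independent of $r$, uses the Uniform Convergence Theorem for slowly varying functions on the compact range $t\in[1,1/r_0]$ to make $(l_0(tr)/l_0(r))^{2-\sigma}$ uniformly close to $1$, and handles the tail $[1/r_0,1/r]$ with Potter bounds. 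You instead split at the $r$--dependent point $\rho_0/r$, use only Potter bounds with adjustable constant $A>1$ on both pieces, and aim directly at the two--sided bound $1/2\le L_\sigma/l_\sigma\le 2$ without passing through the limit. Your route is more elementary (no UCT), while the paper's yields the sharper asymptotic; both are valid.

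There is, however, a small quantitative slip in your upper bound. You estimate the main contribution by $A^{2-\sigma}\sigma/(\sigma-\eta(2-\sigma))\le 2A^{2-\sigma}$ and then say that choosing $A$ close to $1$ and $\rho$ small enough yields $L_\sigma/l_\sigma\le 2$. But $2A^{2-\sigma}>2$ whenever $A>1$ and $\sigma<2$, so as written there is no room to absorb the positive tail. The fix is immediate and already implicit in what you wrote: keep the sharper factor $\sigma/(\sigma-\eta(2-\sigma))$, which with $\eta=\delta_0/2$ satisfies $\sigma/(\sigma-\eta(2-\sigma))\le 4/3$ uniformly in $\sigma\in[\sigma_0,2)$ (check the endpoint $\sigma=\sigma_0$ using $\eta(2-\sigma_0)\le\sigma_0/4$). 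Then the main part is at most $(4/3)A^{2-\sigma_0}$, which is strictly below $2$ for $A$ close to $1$, leaving a fixed margin for the tail. The lower bound has no such issue since the tail there is nonnegative.
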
  
\begin{proof} Since $l_0$ varies  slowly at zero and infinity,     there exist    $c_0\geq 1  $ and  $c_\infty\geq 1  $  such that  
$$  \frac{l_0(s)}{l_0(r)}\leq  c_0 \max\left\{  \left(\frac{s}{r}\right)^{\delta_0}, \left(\frac{s}{r}\right)^{-\delta_0}\right\}\quad\forall r,s\in(0,1],$$
$$  \frac{l_0(s)}{l_0(r)}\leq  c_\infty \max\left\{  \left(\frac{s}{r}\right)^{\delta_0}, \left(\frac{s}{r}\right)^{-\delta_0}\right\}\quad\forall r,s\in[1,+\infty)$$
from Potter's theorem; see Appendix \ref{sec-regular-variation}.    
This implies that  
$$  \frac{l(s)}{l(r)}\leq  c_0^{2-\sm} \max\left\{  \left(\frac{s}{r}\right)^{-\sm+\delta_0(2-\sm)}, \left(\frac{s}{r}\right)^{-\sm-\delta_0(2-\sm)}\right\}\quad\forall r,s\in(0,1],$$
and 
$$  \frac{l(s)}{l(r)}\leq  c_\infty^{2-\sm} \max\left\{  \left(\frac{s}{r}\right)^{-\sm+\delta_0(2-\sm)}, \left(\frac{s}{r}\right)^{-\sm-\delta_0(2-\sm)}\right\}\quad\forall r,s\in[1,+\infty).$$ 
By choosing $a_0:=c_0^2, a_\infty:=c_\infty^2,$ and $\delta=\delta'=\delta_0(2-\sm),$  the property (a) in Property \ref{hypo-kernel-l} and Property \ref{hypo-kernel-l-infty} hold  since $$\delta=\delta'\leq  \frac{\sm_0}{2(2-\sm_0)}(2-\sm)\leq \frac{\sm_0}{2}\leq \frac{\sm}{2}\quad\forall \sm\in[\sm_0,2).$$  
  Lastly,    the property (b) in Property \ref{hypo-kernel-l}  will be proved   in the following Lemma \ref{lem-l-sigma}. 
\end{proof} 

\begin{lemma}   \label{lem-l-sigma}
 Under the same assumption as in Proposition \ref{prop-l-sigma}, we have that 
\begin{equation*}
  \frac{{\sm} {\int_{r}^1s^{-1}  {l_\sm(s)}ds}}{l_\sm(r)} \to 1 \quad\mbox{as $r\to0+$
}
\end{equation*}
  uniformly with respect to $\sm\in[\sm_0,2). $ In particular, there exists a uniform constant $\rho\in(0,1)$ such that for any  $\sm\in[\sm_0,2),$
  $$ \frac{1}{2}\leq \frac{{\sm} {\int_{r}^1 s^{-1} {l_\sm(s)}ds}}{l_\sm(r)} =\frac{L_\sm(r)}{l_\sm(r)} \leq  {2},\qquad\forall r\in(0,\rho) ,$$
  where $\displaystyle L_\sm(r):=  {\sm}\int_{r}^1  {s^{-1}l_\sm(s)}ds.$
\end{lemma}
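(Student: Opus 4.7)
My plan is to perform the substitution $t=s/r$ to rewrite the ratio $L_\sm(r)/l_\sm(r)$ as an integral on $[1,1/r]$ whose integrand involves the local ratio $l_0(rt)/l_0(r)$, and then to estimate its deviation from $1$ separately on a short bounded piece, a Potter piece, and a residual piece near the upper endpoint.

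A direct computation from $l_\sm(r)=r^{-\sm}l_0(r)^{2-\sm}$ gives
$$\frac{L_\sm(r)}{l_\sm(r)}=\sm\int_1^{1/r}t^{-1-\sm}\left(\frac{l_0(rt)}{l_0(r)}\right)^{2-\sm}dt.$$
Subtracting the identity $\sm\int_1^{1/r}t^{-1-\sm}\,dt=1-r^\sm$, and noting that $r^\sm\leq r^{\sm_0}\to 0$ uniformly in $\sm$, reduces the claim to showing that
$$E_\sm(r):=\sm\int_1^{1/r}t^{-1-\sm}\left[\left(\frac{l_0(rt)}{l_0(r)}\right)^{2-\sm}-1\right]dt$$
tends to $0$ uniformly in $\sm\in[\sm_0,2)$ as $r\to 0+$. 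Fixing $\epsilon>0$, I would split the integral at $t=M$ (with $M>1$ large) and at $t=\rho_1/r$, where $\rho_1\in(0,1)$ is a Potter threshold below which $l_0(s)/l_0(r)\leq c_0\max\{(s/r)^{\delta_0},(s/r)^{-\delta_0}\}$ holds on $(0,\rho_1]$, with $\delta_0$ as in Proposition \ref{prop-l-sigma}.

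On $[1,M]$ the uniform convergence theorem for slowly varying functions yields $l_0(rt)/l_0(r)\to 1$ uniformly for $t\in[1,M]$ as $r\to 0$, and since $2-\sm\in(0,2-\sm_0]$ is bounded, the contribution to $E_\sm(r)$ from this range is $O(\epsilon)$ uniformly in $\sm$. On $[M,\rho_1/r]$, where $rt\leq\rho_1$, Potter's bound gives $(l_0(rt)/l_0(r))^{2-\sm}\leq c_0^{2-\sm}t^{\delta_0(2-\sm)}$, and the restriction $\delta_0\leq\sm_0/(2(2-\sm_0))$ fixed in Proposition \ref{prop-l-sigma} ensures that the effective integrand exponent $-1-\sm+\delta_0(2-\sm)$ stays below $-1-\eta$ for a uniform $\eta>0$ throughout $\sm\in[\sm_0,2)$, so this contribution is $O(M^{-\eta})$. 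On the residual range $[\rho_1/r,1/r]$ the argument $rt$ lies in the compact set $[\rho_1,1]$ where $l_0$ is bounded, while the Potter lower bound $l_0(r)^{-1}\leq c_0r^{-\delta_0}$ yields a contribution of order $r^{\sm-\delta_0(2-\sm)}$, which again decays uniformly in $\sm$ by the same choice of $\delta_0$.

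The hardest point is securing uniformity as $\sm\nearrow 2$: the effective decay exponent $\sm-\delta_0(2-\sm)$ must remain bounded away from zero along the whole range $[\sm_0,2)$, and the Potter constants $c_0,\rho_1$ must be taken independent of $\sm$. The former is exactly the reason for the bound $\delta_0\leq\sm_0/(2(2-\sm_0))$ appearing in Proposition \ref{prop-l-sigma}; the latter is automatic because Potter's bounds are properties of the single slowly varying function $l_0$. Once $E_\sm(r)\to 0$ uniformly is established, the ``in particular'' clause is immediate by choosing $\rho\in(0,1)$ so small that $|L_\sm(r)/l_\sm(r)-1|\leq 1/2$ for every $r\in(0,\rho)$ and every $\sm\in[\sm_0,2)$.
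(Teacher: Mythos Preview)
Your proof is correct and follows essentially the same route as the paper: the same substitution $t=s/r$, the same reduction to showing the error term vanishes uniformly, and the same two key ingredients (the Uniform Convergence Theorem on a bounded $t$-interval and Potter's bound on the tail). The only cosmetic difference is that the paper applies Potter's inequality on all of $(0,1]$ (valid since $l_0$ is locally bounded away from $0$ and $\infty$), so it needs only a single split point $1/r_0$ instead of your two split points $M$ and $\rho_1/r$; taking $\rho_1=1$ collapses your third piece and recovers the paper's argument verbatim.
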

\begin{proof}
For $0<r<1,$ we   rewrite 
\begin{align*}
\frac{\int_r^1 s^{-1-\sm} l_0(s)^{2-\sm}ds }{ r^{-\sm}l_0(r)^{2-\sm}}=\int_1^{ {1}/{r}} t^{-1-\sm}\left(\frac{l_0(tr)}{l_0(r)}\right)^{2-\sm} dt.
\end{align*} 
Note that  for $t\in(1,1/r)$
$$\left(\frac{l_0(tr)}{l_0(r)}\right)^{2-\sm}\leq  c_0^{2-\sm}t^{\delta_0(2-\sm)},$$
  in the proof  of Proposition \ref{prop-l-sigma}. 
So the integrand is bounded by 
$$c_0^{2} t^{-1-\sm+\delta_0(2-\sm)}$$
which is integrable since $\sm-\delta_0(2-\sm)\geq \sm/2\geq  \sm_0/2.$ Thus it follows from  the Dominated Convergence Theorem  that $\int_1^{ {1}/{r}} t^{-1-\sm}\left(\frac{l_0(tr)}{l_0(r)}\right)^{2-\sm} dt$ converges to $\int_1^{+\infty} t^{-1-\sm} dt =\frac{1}{\sm}$  as $r\to0+$  since $l_0$  varies slowly at zero. 
Now, it remains  to show the uniform convergence with respect to $\sm\in[\sm_0,2).$  Let $\vep\in(0,1)$ be given. 
For a small uniform constant  $r_0\in(0,1)$ to  be chosen later, we have  
\begin{align*}
\sm\int_1^{ {1}/{r}} t^{-1-\sm}\left(\frac{l_0(tr)}{l_0(r)}\right)^{2-\sm} dt-1
&= \sm\int_1^{ {1}/{r_0}} t^{-1-\sm}\left\{\left(\frac{l_0(tr)}{l_0(r)}\right)^{2-\sm} -1\right\}dt\\
&+\sm\int_{1/r_0}^{ {1}/{r}} t^{-1-\sm} \left(\frac{l_0(tr)}{l_0(r)}\right)^{2-\sm}  dt-\sm\int_{ {1}/{r_0}}^{\infty} t^{-1-\sm} dt\\
&=: I_1+I_2+I_3.
\end{align*} 
We select $r_0>0$ sufficiently  small so that  for $0<r<r_0,$
\begin{align*}
I_2&\leq \sm c_0^2\int_{1/r_0}^{ {1}/{r}} t^{-1-\sm +\delta_0(2-\sm)}  dt= \sm c_0^2\frac{1}{\sm-\delta_0(2-\sm)}\left\{ r_0^{\sm-\delta_0(2-\sm)}- r^{\sm-\delta_0(2-\sm)}\right\}\\
&\leq \sm c_0^2\frac{2}{\sm }  r_0^{\sm/2}\leq  2 c_0^2   r_0^{\sm_0/2}< \frac{\vep}{2},
\end{align*} 
and hence   
\begin{align*}
|I_3|&= \sm  \int_{1/r_0}^{ \infty} t^{-1-\sm  }  dt\leq    r_0^{\sm}  \leq r_0^{\sm_0/2}< \frac{\vep}{4}.
\end{align*}  
Now we claim  that  for a fixed $r_0>0,$
$$ I_1:= \sm\int_1^{ {1}/{r_0}} t^{-1-\sm}\left\{\left(\frac{l_0(tr)}{l_0(r)}\right)^{2-\sm} -1\right\}dt \to 0\quad\mbox{as $r\to0+$}$$
uniformly with respect to $\sm\in[\sm_0,2). $
According to  the Uniform Convergence Theorem in \cite[Theorem 1.5.2]{BGT}, we    have that
\begin{equation*}
\frac{l_0(tr)}{l_0(r)}  \to 1 \quad\mbox{as $r\to 0+\,\,$ uniformly for   $t\in\left[1, 1/r_0\right].$ }
\end{equation*}
 Then  it follows that   $\displaystyle \left(\frac{l_0(tr)}{l_0(r)}\right)^{2-\sm}$ uniformly converges  to $1$ as $r\to0+$ for  $t\in\left[1, 1/r_0\right]$ and $\sm\in(0,2).$ 
 Namely, there exists  a uniform constant  $\rho\in(0,1)$ with respect to $\sm\in[\sm_0,2),$ depending only on $l_0$ and $\sm_0,$ such that 
   $$\left|\left(\frac{l_0(tr)}{l_0(r)}\right)^{2-\sm} -1\right| <  \frac{\vep}{4},\quad\forall  r\in(0,\rho),\,\, t\in[1,1/r_0].$$ 
   Thus, we have that for any $0<r<\rho,$ 
\begin{align*}
 \sm\int_1^{ {1}/{r_0}} t^{-1-\sm}\left\{\left(\frac{l_0(tr)}{l_0(r)}\right)^{2-\sm} -1\right\}dt&< \frac{\sm \vep}{4} \int_1^{1/r_0} t^{-1-\sm}dt\\ 
 &\leq \frac{  \vep}{4} \left(1- r_0^{\sm}\right) 
 < \frac{\vep}{4}.
\end{align*}     
Therefore,  for any $0<r<\rho,$ we conclude  that  
\begin{align*}
\left|\sm\int_1^{ {1}/{r}} t^{-1-\sm}\left(\frac{l_0(tr)}{l_0(r)}\right)^{2-\sm} dt-1\right| <\vep,
\end{align*} 
which  finishes the proof. 
\end{proof}

 \appendix

\section{Regular   variations}\label{sec-regular-variation}

We recall    regularly varying functions at zero  and their properties. The results of regularly varying  functions at infinity  are  
established in  \cite{BGT}, which  are  simple inversions of those for regularly varying functions at zero.    
\begin{definition}[Regular and slow variations] Let  $l: (0, 1) \to (0, +\infty) $ be  a  measurable function. 
\begin{enumerate}[(i)]
\item  A       function $l: (0, 1) \to (0, +\infty) $ is said to vary regularly at zero with index $\alpha\in\R$  if for every $\kappa> 0$
$$\lim_{r \rightarrow 0+}\frac{l(\kappa r)}{l (r)}=\kappa^{\ap}.$$
\item  A regularly varying function is called to be slowly varying if its index $\ap$ is zero.
\end{enumerate}

\end{definition}
 We state the important   properties of  regularly and slowly varying functions     used in this paper as a lemma. The proofs  and more details for regular and slow variations  can be found in    \cite{BGT}; see also \cite[Appendix A]{KM}. 
      
\begin{lemma}
\begin{enumerate}[(i)]
Let  $l: (0, 1) \to (0, +\infty) $ be  a  measurable function.  
\item Any function $l $ that varies regularly with index $\ap\in\R$  is of the form
  $$l(r)=r^{\ap}l_0(r)$$  for some  slowly varying  function $l_0.$  
\item Let $l$ be a   regularly  varying function with index $-\ap\leq 0$ which is locally bounded away from $0$ and $+\infty$.  Then  Potter's theorem  \cite[Theorem 1.5.6]{BGT}   asserts that for any 
  $\delta>0,$ there exists   $A_ \delta\geq 1  $ such that for $0<r,s<1$   
$$  \frac{l(s)}{l(r)}\leq  A_\delta \max\left\{  \left(\frac{s}{r}\right)^{-\ap+\delta}, \left(\frac{s}{r}\right)^{-\ap-\delta}\right\}.$$
\item  Let $l$ be  slowly varying and $\beta>-1.$ Then Karamata's theorem    \cite[Proposition 1.5.8]{BGT} asserts that 
$$\lim_{r\to0+}\frac{\int_0^r s^\beta l(s)ds }{ r^{\beta+1 } l(r)}=\frac{1}{\beta+1}.$$
\item Let  $l$  be a     regularly varying function  with index $-\ap<0.$ Then   \cite[Theorem 1.5.11]{BGT}  states that 
$$\lim_{r\to0+}\frac{\int_r^1s^{-1} l(s)ds }{l(r)}=\frac{1}{\ap}.$$
This implies that if  $l$ varies regularly with index $\ap<0,$ so does  the function $r\mapsto \int_r^1s^{-1} l(s)ds.$
\item Let  $l$ be a regularly varying function with index $\ap\in\R.$ Then the  
Uniform Convergence Theorem   \cite[Theorem 1.5.2]{BGT}  asserts that 
$$\frac{l(\kappa r)}{l(r)}\to\kappa^\ap\quad\mbox{as $r\to0+\,\,\,\,$ uniformly in $\kappa\in[a,b]$ }$$ 
for each $[a,b]\subset(0,+\infty).$
\end{enumerate}
\end{lemma}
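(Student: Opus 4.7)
The plan is to reduce every assertion (i)--(v) to its standard counterpart for regular variation at infinity by means of the inversion $\iota:r\mapsto 1/r$, and then quote the corresponding result from Bingham--Goldie--Teugels \cite{BGT}. Concretely, associate to any $l:(0,1)\to(0,+\infty)$ the function $\tilde l:(1,+\infty)\to(0,+\infty)$ defined by $\tilde l(x):=l(1/x)$. A one-line computation,
\[
\frac{\tilde l(\kappa x)}{\tilde l(x)}=\frac{l(1/(\kappa x))}{l(1/x)}=\frac{l(\kappa^{-1}r)}{l(r)},\qquad r=1/x,
\]
shows that $l$ varies regularly at $0$ with index $\alpha$ if and only if $\tilde l$ varies regularly at $+\infty$ with index $-\alpha$; likewise, $l$ is locally bounded away from $0$ and $+\infty$ on $(0,1)$ if and only if $\tilde l$ is locally bounded away from $0$ and $+\infty$ on $(1,+\infty)$.

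With this dictionary in hand, each clause becomes a quotation. For (i), set $l_0(r):=r^{-\alpha}l(r)$; then $l_0(\kappa r)/l_0(r)=\kappa^{-\alpha}\cdot l(\kappa r)/l(r)\to 1$, so $l_0$ is slowly varying by definition, without needing any external theorem. For (ii), Potter's bounds at infinity (\cite[Thm.~1.5.6]{BGT}) applied to $\tilde l$ yield, for each $\delta>0$, a constant $\tilde A_\delta\geq1$ and some $X_\delta\geq 1$ with $\tilde l(y)/\tilde l(x)\leq\tilde A_\delta\max\{(y/x)^{\alpha+\delta},(y/x)^{\alpha-\delta}\}$ for $x,y\geq X_\delta$. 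Setting $s=1/y,r=1/x$ gives the claimed estimate for $r,s\in(0,1/X_\delta]$, and the local boundedness of $l$ lets us enlarge $A_\delta$ to cover the compact piece $[1/X_\delta,1]$. For (iii) and (iv) I would use $s=1/t$ to rewrite
\[
\int_0^r s^\beta l(s)\,ds=\int_{1/r}^{\infty}t^{-\beta-2}\tilde l(t)\,dt,\qquad \int_r^1 s^{-1}l(s)\,ds=\int_1^{1/r}t^{-1}\tilde l(t)\,dt,
\]
and then apply Karamata's theorem and its companion at infinity (\cite[Prop.~1.5.8,~Thm.~1.5.11]{BGT}) to $\tilde l$: the hypothesis $\beta>-1$ in (iii) matches the integrability condition $-\beta-2<-1$ at infinity, and the index $-\alpha<0$ for $l$ at zero becomes index $\alpha>0$ for $\tilde l$ at infinity, producing the limits $1/(\beta+1)$ and $1/\alpha$ respectively. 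Finally, (v) is the Uniform Convergence Theorem \cite[Thm.~1.5.2]{BGT} for $\tilde l$: uniformity in $\kappa\in[a,b]\subset(0,+\infty)$ as $r\to0+$ transcribes to uniformity in $\kappa^{-1}\in[1/b,1/a]$ as $x=1/r\to+\infty$, and that is again a compact subset of $(0,+\infty)$.

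The only technical care needed is bookkeeping: one must verify that measurability, local boundedness, and integrability are preserved by the inversion $\iota$, all of which is immediate. There is no essential obstacle, since the whole lemma is a compendium of well-known facts collected for easy reference in Section~\ref{sec-harnack}; the point of the proposed argument is simply to indicate how each of the five statements is obtained from the version in \cite{BGT} formulated at $+\infty$.
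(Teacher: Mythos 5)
Your proposal is correct and is essentially the paper's own approach: the paper gives no proof beyond citing \cite{BGT} and remarking that the statements at zero are "simple inversions" of the corresponding results at infinity, which is exactly the dictionary $\tilde l(x)=l(1/x)$ you spell out. The clause-by-clause transcriptions (including the index sign flips and the integrability checks for Karamata) are all accurate, so nothing further is needed.
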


   {\bf Acknowledgement} 
  The authors would like to thank  Prof. Moritz Kassmann  for suggesting   the problem considered in this  paper.  
 Ki-Ahm Lee  was supported by the National Research Foundation of Korea(NRF) grant funded by the Korea government(MSIP) (No.2014R1A2A2A01004618). 
Ki-Ahm Lee also hold a joint appointment with the Research Institute of Mathematics of Seoul National University. 
   
  



\end{document}